\newcommand{\bburl}[1]{\textcolor{blue}{\url{#1}}}
\newcommand{\seqnum}[1]{\href{https://oeis.org/#1}{\rm \underline{#1}}}
\newtheorem{thm}{Theorem}[section]
\newtheorem{cor}[thm]{Corollary}
\newtheorem{lem}[thm]{Lemma}
\newtheorem{rek}[thm]{Remark}
\newtheorem{prob}[thm]{Problem}
\DeclareFixedFont{\ttb}{T1}{txtt}{bx}{n}{12} 
\DeclareFixedFont{\ttm}{T1}{txtt}{m}{n}{12}  
\definecolor{deepblue}{rgb}{0,0,0.5}
\definecolor{deepred}{rgb}{0.6,0,0}
\definecolor{deepgreen}{rgb}{0,0.5,0}
\newcommand\pythonstyle{\lstset{
language=Python,
basicstyle=\ttm,
morekeywords={self},              
keywordstyle=\ttb\color{deepblue},
emph={MyClass,__init__},          
emphstyle=\ttb\color{deepred},    
stringstyle=\color{deepgreen},
frame=tb,                         
showstringspaces=false
}}
\newcommand\pythoninline[1]{{\pythonstyle\lstinline!#1!}}
\definecolor{ao}{rgb}{0.0, 0.5, 0.0}
\numberwithin{equation}{section}
\DeclareFontFamily{U}{mathx}{}
\DeclareFontShape{U}{mathx}{m}{n}{<-> mathx10}{}
\DeclareSymbolFont{mathx}{U}{mathx}{m}{n}
\DeclareMathAccent{\widehat}{0}{mathx}{"70}
\DeclareMathAccent{\widecheck}{0}{mathx}{"71}
\title{A Pair of Diophantine Equations and Fibonacci-Like Sequences}
\author[H. V. Chu]{H\`ung Vi\d{\^e}t Chu}
\email{\textcolor{blue}{\href{mailto:hchu@wlu.edu}{hchu@wlu.edu}}}
\address{Department of Mathematics, Washington and Lee University, Lexington, VA 24450, USA}   
\author[R. Gulecha]{Rishabh Gulecha}
\email{\textcolor{blue}{\href{mailto:rishabhg@tamu.edu}{rishabhg@tamu.edu}}}
\address{Department of Mathematics, Texas A\&M University, College Station, TX 77843, USA}
\author[S. Guo]{Sicheng Guo}
\email{\textcolor{blue}{\href{mailto:sophiasg@umich.edu}{sophiasg@umich.edu}}}
\address{Department of Mathematics, University of Michigan, Ann Arbor, MI 48109, USA}   
\author[N. Johnson]{Nathanael Johnson}
\email{\textcolor{blue}{\href{mailto:nathan.erikson.9701@gmail.com}{nathan.erikson.9701@gmail.com}}}
\address{Department of Mathematics, The Ohio State University, Columbus, OH 43210, USA}
\author[S. J. Miller]{Steven J. Miller}
\email{\textcolor{blue}{\href{mailto:sjm1@williams.edu}{sjm1@williams.edu},
\href{mailto:Steven.Miller.MC.96@aya.yale.edu}{Steven.Miller.MC.96@aya.yale.edu}}}
\address{Department of Mathematics and Statistics, Williams College, Williamstown, MA 01267, USA}
\author[Y. Shin]{Yeju Shin}
\email{\textcolor{blue}{\href{mailto:yejushin@umass.edu}{yejushin@umass.edu}}}
\address{Department of Mathematics and Statistics, University of Massachusetts Amherst, Amherst, MA 01003, USA}
\begin{document}

\thanks{This work was partially supported by the National Science Foundation DMS2341670. We thank the
participants at Polymath Jr. 2025 for helpful discussions.}

\subjclass[2020]{11D04 (primary); 11B39; 11B83  (secondary)}

\keywords{Diophantine equation; Fibonacci numbers; sequences}

\maketitle

\begin{abstract}
Given two relatively prime numbers $a$ and $b$, it is known that exactly one of the two Diophantine equations has a nonnegative integral solution $(x,y)$:
$$
    ax + by \ =\ \frac{(a-1)(b-1)}{2}\quad \mbox{ and }\quad 1 + ax + by \ =\ \frac{(a-1)(b-1)}{2}.
$$
Furthermore, the solution is unique. This paper surveys recent results on finding the solution and determining which equation is used when $a$ and $b$ are taken from certain sequences. We contribute to the literature by finding $(x,y)$ when $a$ and $b$ are consecutive terms of sequences having the Fibonacci recurrence and arbitrary initial terms. 
\end{abstract}

\tableofcontents

\section{Introduction}\label{intro}
In the study of cyclotomic polynomials $\Phi_{pq}(x)$ for prime numbers $p$ and $q$, Beiter \cite{Be} used the result that exactly one of the two equations
$$px + qy\ =\ \frac{(p-1)(q-1)}{2}\quad \mbox{ and }\quad 1 + px + qy\ =\ \frac{(p-1)(q-1)}{2}$$
has a nonnegative integral solution $(x,y)$, and the solution is unique. In general, the same conclusion holds for every pair $(a,b)\in \mathbb{N}^2$ with $\gcd(a,b) = 1$.

\begin{thm}\cite[Theorem 1.1]{C4}\label{ot}
For relatively prime $a, b\in \mathbb{N}$, exactly one of the following equations has a nonnegative, integral solution $(x,y)$:
\begin{align}
    \label{e1}ax + by &\ =\ \frac{(a-1)(b-1)}{2},\\
    \label{e2}1 + ax + by &\ =\ \frac{(a-1)(b-1)}{2}.
\end{align}
Furthermore, the solution is unique. 
\end{thm}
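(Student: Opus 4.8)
The plan is to translate the problem into the language of nonnegative representations by $a$ and $b$. Writing $N := (a-1)(b-1)/2 = (ab-a-b+1)/2$, equations \eqref{e1} and \eqref{e2} ask precisely whether $N$, respectively $N-1$, lies in the set $\{ax+by : x,y\in\mathbb{Z}_{\ge 0}\}$. The two targets are linked through the Frobenius number $g := ab-a-b$ of the semigroup generated by $a$ and $b$: since $2N-1 = g$, we get $N-1 = g-N$. Thus the theorem is exactly the statement that, among the symmetric pair $N$ and $g-N$, exactly one is representable, and uniquely so.

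First I would set up a canonical representation. Using $\gcd(a,b)=1$, for each integer $m$ there is a unique $x(m)\in\{0,1,\dots,b-1\}$ with $a\,x(m)\equiv m\pmod b$; put $y(m) := (m-a\,x(m))/b\in\mathbb{Z}$, so $m = a\,x(m)+b\,y(m)$. A short argument then shows that $m$ admits a nonnegative representation if and only if $y(m)\ge 0$: any nonnegative representation differs from the canonical one by $(x,y)\mapsto(x+kb,\,y-ka)$, and nonnegativity of the $x$-coordinate forces $k\ge 0$, whence $y(m)\ge y\ge 0$. Moreover, when $m<ab$ this canonical representation is the \emph{only} nonnegative one, since any two nonnegative representations have $x$-coordinates differing by a nonzero multiple of $b$, so a second one would force $ax\ge ab>m$.

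The crux is the identity $y(N)+y(N-1) = -1$, which I would obtain from the symmetry $n\leftrightarrow g-n$. Because $g = ab-a-b\equiv -a\pmod b$, the defining congruence gives $x(g-n)=b-1-x(n)$, and substituting back yields $y(g-n) = -(y(n)+1)$. Taking $n=N$ and using $g-N=N-1$ gives the identity. Since $y(N)$ and $y(N-1)$ are integers summing to $-1$, exactly one of them is nonnegative, which by the criterion above says exactly one of $N$, $N-1$ is representable, i.e.\ exactly one of \eqref{e1}, \eqref{e2} is solvable. Finally, as $N<ab$ and $N-1<ab$, the uniqueness clause applies to whichever target is representable, delivering the unique solution $(x,y)$.

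The main obstacle is establishing the symmetry relation $y(g-n) = -(y(n)+1)$ cleanly; the rest is bookkeeping. It rests on pinning down $x(g-n)=b-1-x(n)$ from $g-n\equiv -a-n\pmod b$ and then evaluating $y(g-n)$ by direct substitution. I would also dispose of the degenerate cases $a=1$ or $b=1$, where $N=0$: there \eqref{e1} holds via $(x,y)=(0,0)$ while \eqref{e2} fails, which is consistent with the identity since it reduces to $y(0)+y(-1)=-1$.
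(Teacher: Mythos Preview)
Your proof is correct and is essentially the same argument as the paper's: both pick the canonical representative with $x\in\{0,\dots,b-1\}$, show that the two $x$-values sum to $b-1$ (your $x(g-n)=b-1-x(n)$ is exactly the paper's $x_1+x_2=b-1$), and deduce $y(N)+y(N-1)=-1$. Your framing via the Frobenius number and the symmetry $n\leftrightarrow g-n$ is a pleasant conceptual wrapper, and you are more explicit about uniqueness (via $N<ab$) than the paper, which simply invokes ``the standard proof by contradiction.''
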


\begin{proof} Let $(a,b)\in \mathbb{N}^2$ with $\gcd(a,b) = 1$.
We start with an easy observation: if the equation $ax + by = n$ has a solution 
$(x,y) = (r,s)\in \mathbb{Z}^2$ with $r < b$ and $s < 0$, then the equation has no nonnegative integral solutions. This is because given $ar + bs = n$, all integral solutions of $ax + by = n$ must have the form $(x,y) = (r + tb, s - ta)$, where $t$ is an integer. In order that $r +tb \ge 0$, we need $t\ge 0$, which implies that $s - ta\le s < 0$. 

Let $k = (a-1)(b-1)/2$. Since $\gcd(a,b) = 1$, there are unique integers $x_1$ and $x_2$ in $[0, b-1]$ with $ax_1\equiv k\mod b$ and $ax_2\equiv (k-1)\mod b$. Let $y_1 := (k-ax_1)/b$ and $y_2 := (k-1-ax_2)/b$. We have
$$a(x_1 + x_2)\ \equiv \ 2k-1\ \equiv\ -a\mod b\ \Longrightarrow\  b|(x_1 + x_2 + 1).$$
It follows from $1\le x_1 + x_2 + 1\le 2b-1$ that $x_1 + x_2  = b-1$. Hence,
$$y_1 + y_2\ =\ \frac{(k-ax_1) + (k-1-ax_2)}{b}\ =\ \frac{2k-1-a(b-1)}{b}\ =\ -1.$$
Since $y_1$ and $y_2$ are integers, exactly one of them is nonnegative. Combined with the observation at the beginning, this proves  that exactly one equation has nonnegative integral solutions. 

Uniqueness follows from the standard proof by contradiction.
\end{proof}

Given $(a,b)\in \mathbb{N}^2$ with $\gcd(a,b) = 1$, we denote the solution $(x,y)$ of \eqref{e1} (if it exists) by $$(x^{(0)}(a,b),\ y^{(0)}(a,b))$$ and denote the solution of \eqref{e2} (if it exists) by $$(x^{(1)}(a,b),\ y^{(1)}(a,b)).$$

Besides discussing recent progress in the study of Equations \eqref{e1} and \eqref{e2} from \cite{ACLLMM, CCKKLMYY, C4, Da}, we contribute to the literature by finding solutions to \eqref{e1} and \eqref{e2} when $a$ and $b$ are consecutive terms of sequences that have the Fibonacci recurrence with arbitrary initial terms. We call these sequences \textit{Fibonacci-like}. Additionally, our investigation of Fibonacci-like sequences reveals why existing formulas in the literature are often given in $6$ cases (see \cite[Theorems 1.4 and 1.6]{C4} and \cite[Corollary 1.4]{CCKKLMYY}, for example). Briefly speaking, $F_n$ is even if and only if $3$ divides $n$; since the solution $(x,y)$ depends not only the parity of $F_n$ but also on the parity of $n$ (due to the Cassini's identity), we need to consider $6$ cases.  Along the way, we suggest various problems for future investigations. 

The paper is structured as follows: Section \ref{explicitformula} presents numerous identities inspired by the two Diophantne equations; Section \ref{Flikeseq} proves new formulas that compute the solution $(x,y)$ when $a$ and $b$ are  consecutive terms of Fibonacci-like sequences; Section \ref{whicheq} shows a method and its applications in determining which equation has a nonnegative integral solution.

\section{Identities from the two Diophantine equations}\label{explicitformula}

This section summarizes the ongoing study of the solution to \eqref{e1} and \eqref{e2} when $a$ and $b$ are relatively prime numbers from a well-known sequence. A typical result gives explicit formulas to compute the solution as $a$ and $b$ run along the sequence.

\subsection{The Fibonacci sequence}

Let $(F_n)_{n=0}^\infty$ be the Fibonacci sequence with $F_0 = 0$, $F_1 = 1$, and $F_n = F_{n-1} + F_{n-2}$ for $n\ge 2$. For each positive integer $n\ge 2$, 
$$\gcd(F_n, F_{n+1})\ =\ \gcd(F_n, F_{n-1} + F_{n}) \ =\ \gcd(F_{n-1}, F_n),$$
so for every $n\in \mathbb{N}$,
$$\gcd(F_n, F_{n+1})\ =\ \gcd(F_1, F_2)\ =\ 1;$$
that is, consecutive terms of the Fibonacci sequence are relatively prime. This fact and Theorem \ref{ot} inspire the following identities where we consider $6$ cases as discussed in Section \ref{intro}.

\begin{thm}\cite[Theorem 1.4]{C4}\label{ori}
    For $k\ge 1$, the following hold:
    \begin{align*}
        \frac{1}{2}(F_{6k-1}-1)F_{6k} + \frac{1}{2}(F_{6k-1} - 1)F_{6k+1}&\ =\ \frac{(F_{6k}-1)(F_{6k+1}-1)}{2};\\
        \frac{1}{2}(F_{6k+1} - 1)F_{6k+1} + \frac{1}{2}(F_{6k-1}-1)F_{6k+2}&\ =\ \frac{(F_{6k+1}-1)(F_{6k+2}-1)}{2};\\
        \frac{1}{2}(F_{6k+1}-1)F_{6k+2} + \frac{1}{2}(F_{6k+1}-1)F_{6k+3}&\ =\ \frac{(F_{6k+2}-1)(F_{6k+3}-1)}{2};\\ 
        1 + \frac{1}{2}(F_{6k+2}-1)F_{6k+3} + \frac{1}{2}(F_{6k+2}-1)F_{6k+4}&\ =\ \frac{(F_{6k+3}-1)(F_{6k+4}-1)}{2};\\
        1 + \frac{1}{2}(F_{6k+4}-1)F_{6k+4} + \frac{1}{2}(F_{6k+2}-1)F_{6k+5}&\ =\ \frac{(F_{6k+4}-1)(F_{6k+5}-1)}{2};\\
        1 + \frac{1}{2}(F_{6k+4}-1)F_{6k+5} + \frac{1}{2}(F_{6k+4}-1)F_{6k+6}&\ =\ \frac{(F_{6k+5}-1)(F_{6k+6}-1)}{2}.
    \end{align*}
\end{thm}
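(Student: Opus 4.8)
The plan is to treat each of the six displayed equalities as a polynomial identity in two consecutive Fibonacci numbers and to reduce it to Cassini's identity $F_{n-1}F_{n+1} - F_n^2 = (-1)^n$. Fix $k \ge 1$. Each line has the shape of equation \eqref{e1}, namely $ax + by = \frac{(a-1)(b-1)}{2}$, or of equation \eqref{e2}, namely $1 + ax + by = \frac{(a-1)(b-1)}{2}$, with $(a,b) = (F_n, F_{n+1})$ where $n \equiv 0,1,2,3,4,5 \pmod 6$ across the six cases in order. First I would use the recurrence $F_{m+1} = F_m + F_{m-1}$ to rewrite every Fibonacci number that occurs as a coefficient purely in terms of $a = F_n$ and $b = F_{n+1}$. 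Only two shifted indices appear, and they satisfy $F_{n-1} = b - a$ and $F_{n-2} = 2a - b$; after the substitution each coefficient becomes one of $\frac{1}{2}(b-a-1)$, $\frac{1}{2}(a-1)$, or $\frac{1}{2}(2a-b-1)$.

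The core of the verification is then a single expansion per case. Multiplying through by $2$ and collecting terms, every one of the six identities collapses to the quadratic relation $b^2 - ab - a^2 = (-1)^n$. This is exactly Cassini's identity after the substitution $F_{n-1} = b-a$, since $F_{n+1}F_{n-1} - F_n^2 = b(b-a) - a^2 = b^2 - ab - a^2$, and Cassini holds for all $n \ge 1$. So each equality reduces to Cassini with the correct sign, which proves the six relations as stated.

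What remains is to confirm that the coefficients $\frac{1}{2}(F_m-1)$ are nonnegative integers, so that the equalities genuinely exhibit solutions of \eqref{e1} or \eqref{e2} in the sense of Theorem \ref{ot}, and to account for the leading $1$ appearing in exactly three cases. Nonnegativity is immediate, while integrality of $\frac{1}{2}(F_m-1)$ holds exactly when $F_m$ is odd, i.e. when $3 \nmid m$. For $n \equiv 0,2,3,5 \pmod 6$ one has $3 \nmid (n-1)$, so the symmetric choice $x = y = \frac{1}{2}(F_{n-1}-1)$ is integral; but for $n \equiv 1,4 \pmod 6$ we have $3 \mid (n-1)$, hence $F_{n-1}$ is even and the symmetric choice is not an integer, so those two identities instead use $x = \frac{1}{2}(F_n-1)$ and $y = \frac{1}{2}(F_{n-2}-1)$, whose indices are coprime to $3$. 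Whether the constant $1$ is required is then governed jointly by the sign $(-1)^n$ and by which representation is in force: with the symmetric coefficients the bare equation \eqref{e1} balances exactly when $n$ is even, whereas the asymmetric coefficients reverse the sign relation, so \eqref{e1} balances when $n$ is odd. This interaction between the divisibility-by-$3$ condition (controlling integrality) and the parity of $n$ (controlling the Cassini sign) is precisely what forces the period $6$.

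I expect the main obstacle to be bookkeeping rather than genuine difficulty: one must run the six reductions without sign errors and, in particular, treat the two asymmetric cases $n \equiv 1,4 \pmod 6$ carefully, since there both the reduction to Cassini and the placement of the constant $1$ behave oppositely to the symmetric cases. Once every coefficient is expressed through $a$ and $b$, no case requires more than expanding one product and invoking Cassini a single time.
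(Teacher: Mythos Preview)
Your approach is correct: each of the six identities, after multiplying by $2$ and writing everything in terms of $a=F_n$ and $b=F_{n+1}$, collapses to $b^2-ab-a^2=(-1)^n$, which is precisely Cassini's identity $F_{n+1}F_{n-1}-F_n^2=(-1)^n$ rewritten via $F_{n-1}=b-a$. Your discussion of why the coefficients are nonnegative integers and why the constant $1$ appears in exactly three of the six cases is also accurate.

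Note that the paper itself does not give a proof of this theorem; it is quoted from \cite{C4}. However, the paper's own arguments in Section~\ref{Flikeseq} (Theorems~\ref{genformodd} and~\ref{genformeven}) proceed in exactly the spirit you describe: expand the left side, collect terms, and close the computation with Cassini's identity in the form $\pm 1 + F_{n-1}^2 - F_{n-2}F_n = 0$. Your proposal is the $u=v=1$ specialization of that argument, carried out directly rather than deduced from the general $(u,v)$ identities. The only difference is organizational: the paper keeps the parameters $u,v,r$ and verifies the identity once for odd $n$ and once for even $n$, with the case split by residue class mod $6$ deferred to the subsequent theorems that pin down $r$; you instead handle the six residues one at a time from the start. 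Both routes involve the same single substantive step.
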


Similarly, since for each $n\in \mathbb{N}$,
$$\gcd(F_n, F_{n+2})\ =\ \gcd(F_n, F_n + F_{n+1})\ =\ \gcd(F_n, F_{n+1})\ =\ 1,$$
we may set $(a,b) = (F_n, F_{n+2})$ to obtain the next set of identities.

\begin{thm}\cite[Theorem 1.6]{C4}
    For $k\ge 0$, the following hold:
    \begin{align*}
        \frac{1}{2}(F_{6k+2}-1)F_{6k+1} + \frac{1}{2}(F_{6k-1}-1)F_{6k+3}&\ =\ \frac{(F_{6k+1}-1)(F_{6k+3}-1)}{2};\\
        \frac{1}{2}(F_{6k+2}-1)F_{6k+2} + \frac{1}{2}(F_{6k+1}-1)F_{6k+4}&\ =\ \frac{(F_{6k+2}-1)(F_{6k+4}-1)}{2};\\
        \frac{1}{2}(F_{6k+4}-1)F_{6k+3} + \frac{1}{2}(F_{6k+1}-1)F_{6k+5}&\ =\ \frac{(F_{6k+3}-1)(F_{6k+5}-1)}{2};\\
        1 + \frac{F_{6k+5}-1}{2}F_{6k+4} + \frac{1}{2}(F_{6k+2}-1)F_{6k+6}&\ =\ \frac{(F_{6k+4}-1)(F_{6k+6}-1)}{2};\\
         1 + \frac{F_{6k+5}-1}{2}F_{6k+5} + \frac{1}{2}(F_{6k+4}-1)F_{6k+7}&\ =\ \frac{(F_{6k+5}-1)(F_{6k+7}-1)}{2};\\
         1 + \frac{F_{6k+1}-1}{2}F_{6k} + \frac{1}{2}(F_{6k-2}-1)F_{6k+2}&\ =\ \frac{(F_{6k}-1)(F_{6k+2}-1)}{2}.
    \end{align*}
\end{thm}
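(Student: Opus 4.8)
The plan is to read each of the six lines as the assertion that a specific pair $(x,y)$ solves \eqref{e1} or \eqref{e2} with $(a,b)=(F_n,F_{n+2})$, where across the six cases $n$ runs through a complete residue system modulo $6$, namely $n\in\{6k,\,6k+1,\,6k+2,\,6k+3,\,6k+4,\,6k+5\}$. Since each line is ultimately an algebraic identity among Fibonacci numbers, I would verify all six at once by clearing the common factor $\tfrac12$ and reducing to a single underlying identity. The key tool will be Catalan's identity in the case $r=2$, namely $F_{n-2}F_{n+2}-F_n^2=(-1)^{n-1}$, which I would first record; it follows in one line from Cassini's identity $F_{n-1}F_{n+1}-F_n^2=(-1)^n$ together with $F_{n\pm 2}=F_{n\pm 1}+F_n$, or may simply be cited.

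Next I would observe that the six claimed solutions fall into exactly two shapes. In four cases (those with $n\equiv 0,1,3,4\pmod 6$) the pair has the form $x=\tfrac12(F_{n+1}-1)$, $y=\tfrac12(F_{n-2}-1)$; in the remaining two (those with $n\equiv 2,5\pmod 6$) it has the form $x=\tfrac12(F_n-1)$, $y=\tfrac12(F_{n-1}-1)$. Writing $\epsilon\in\{0,1\}$ for the coefficient of the leading $1$ (so that $\epsilon=1$ exactly when the line uses \eqref{e2}), I would multiply each identity by $2$, expand $(F_n-1)(F_{n+2}-1)$ on the right, and use $F_{n+1}-F_{n+2}=-F_n$ and $F_{n-1}-F_n=-F_{n-2}$ to collapse the first shape to $F_{n-2}F_{n+2}-F_n^2=1-2\epsilon$ and the second to $F_{n-2}F_{n+2}-F_n^2=2\epsilon-1$. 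In each case the right-hand side equals exactly $(-1)^{n-1}$ for the prescribed parity of $n$, so the reduced equation is precisely Catalan's identity and every line follows immediately.

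To confirm that each $(x,y)$ is a genuine nonnegative integral solution — and hence \emph{the} solution, by the uniqueness in Theorem \ref{ot} — I would check integrality and nonnegativity separately. Integrality amounts to showing that the relevant Fibonacci numbers are odd, which follows from the standard fact that $F_m$ is even if and only if $3\mid m$: for each residue of $n$ modulo $6$, the indices appearing in $x$ and $y$ avoid multiples of $3$. Nonnegativity is immediate for $k\ge 1$, since all indices are then positive; the only delicate point is $k=0$, where the last case involves $F_{-2}=-1$ and must be read through the signed extension of the Fibonacci sequence (or excluded from the Diophantine interpretation).

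I expect the main obstacle to be organizational rather than conceptual: keeping the six index shifts straight and checking that the placement of the leading $1$ (equivalently, the choice of \eqref{e1} versus \eqref{e2}) is consistent with the sign $(-1)^{n-1}$ forced by Catalan's identity in each residue class. Once the two reduction formulas $F_{n-2}F_{n+2}-F_n^2=1-2\epsilon$ and $F_{n-2}F_{n+2}-F_n^2=2\epsilon-1$ are in hand, the remaining work is a finite parity check over $n\bmod 6$, which is routine.
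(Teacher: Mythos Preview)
The paper does not supply its own proof of this theorem; it is simply quoted from \cite{C4} without argument. Your proposal is therefore not being compared against anything in the present paper, but it is mathematically sound and would constitute a complete proof. The reduction of all six lines to two templates---$x=\tfrac12(F_{n+1}-1),\,y=\tfrac12(F_{n-2}-1)$ for $n\equiv 0,1,3,4\pmod 6$ and $x=\tfrac12(F_n-1),\,y=\tfrac12(F_{n-1}-1)$ for $n\equiv 2,5\pmod 6$---each collapsing after cancellation to the single Catalan identity $F_{n-2}F_{n+2}-F_n^2=(-1)^{n-1}$, is correct and efficient. This is very much in the spirit of the proofs the paper \emph{does} include for neighboring results: for instance, the verification of \eqref{e3} for $(a,b)=(F_n^2,F_{n+1}^2)$ proceeds by a direct algebraic expansion that ultimately rests on Cassini's identity, and the general identities in Theorems \ref{genformodd} and \ref{genformeven} are likewise proved by expanding and invoking $F_{n-2}F_n-F_{n-1}^2=(-1)^{n-1}$. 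Your remark about the $k=0$ boundary (where $F_{-1}$ and $F_{-2}$ must be read through the signed extension) is accurate and worth keeping.
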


In Section \ref{Flikeseq}, we establish formulas for the solution when $a$ and $b$ are consecutive terms of general Fibonacci-like sequences.

\subsection{Fibonacci numbers squared and cubed}

More intriguing solutions occur when we let $(a,b) = (F_n^2, F_{n+1}^2)$ or $(F_n^3, F_{n+1}^3)$. 

\begin{thm}\cite[Corollary 1.4]{CCKKLMYY} Let $n$ be a positive integer at least $2$. The following are true.
\begin{enumerate}
    \item If $n\equiv 0, 2, 3, 5\mod 6$, 
    \begin{equation}\label{e3}(x^{(0)}(F_n^2, F_{n+1}^2),\ y^{(0)}(F_n^2, F_{n+1}^2))\ =\ \left(F_n^2 - \frac{F^2_{n-1}+1}{2}, \frac{F^2_{n-1}-1}{2}\right).\end{equation}
    \item If $n\equiv 1\mod 6$,
     \begin{equation}(x^{(1)}(F_n^2, F_{n+1}^2),\ y^{(1)}(F_n^2, F_{n+1}^2))\ =\ \left(\frac{F^2_{n}-3}{2}, \frac{F_n^2-F^2_{n-1}-1}{2}\right).\end{equation}
     \item If $n\equiv 4\mod 6$,
        \begin{equation}(x^{(1)}(F_n^2, F_{n+1}^2),\ y^{(1)}(F_n^2, F_{n+1}^2))\ =\ \left(\frac{F^2_{n}+1}{2}, \frac{F_n^2-F^2_{n-1}-1}{2}\right).\end{equation}
\end{enumerate}
\end{thm}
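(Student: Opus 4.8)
The plan is to verify directly that each proposed pair $(x,y)$ is a nonnegative integral solution of the indicated equation, and then to invoke Theorem \ref{ot}: since exactly one of \eqref{e1} and \eqref{e2} admits a nonnegative integral solution and that solution is unique, exhibiting one such pair simultaneously identifies the correct equation, certifies that the pair is its solution, and shows the other equation has none. Throughout I abbreviate $a = F_n^2$, $b = F_{n+1}^2$ (these are relatively prime because $\gcd(F_n,F_{n+1})=1$) and write $k=(a-1)(b-1)/2$. The one nontrivial input is Cassini's identity $F_{n+1}F_{n-1}-F_n^2=(-1)^n$; every identity below reduces to it.

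For the algebraic verification I would substitute the proposed $(x,y)$ into $ax+by$, clear the factor $1/2$, and cancel the common terms against $k$ (respectively $k-1$). In case (1) this collapses to the assertion $F_n^4+2F_nF_{n-1}^3+F_{n-1}^4-2F_n^3F_{n-1}-F_n^2F_{n-1}^2=1$, whose left-hand side factors as $(F_n^2-F_nF_{n-1}-F_{n-1}^2)^2$; by the rearranged Cassini identity $F_n^2-F_nF_{n-1}-F_{n-1}^2=(-1)^{n+1}$, this square equals $1$, so \eqref{e1} holds. In cases (2) and (3) the same manipulation reduces $ax+by=k-1$ to $F_n^4+2(-1)^nF_n^2+1=F_{n+1}^2F_{n-1}^2$, which is precisely the square of Cassini's identity $F_{n+1}^2F_{n-1}^2=(F_n^2+(-1)^n)^2$. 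The different constants in $x$, namely $(F_n^2-3)/2$ when $n$ is odd and $(F_n^2+1)/2$ when $n$ is even, are exactly what is needed to produce the sign $(-1)^n$ in that reduction.

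The case split itself comes from tracking integrality, which is where I expect to spend the most care. Using the standard fact that $F_m$ is even precisely when $3\mid m$ (immediate from the period-$3$ pattern of $F_m\bmod 2$), the entries in case (1) are integers iff $F_{n-1}$ is odd, i.e. $3\nmid(n-1)$, i.e. $n\not\equiv 1\pmod 3$, which is exactly $n\equiv 0,2,3,5\pmod 6$; there $y=(F_{n-1}^2-1)/2\ge 0$ and $x=F_n^2-(F_{n-1}^2+1)/2\ge 0$ follow from $F_n\ge F_{n-1}\ge 1$. The complementary classes $n\equiv 1\pmod 3$ split further by the parity of $n$ (the sign in Cassini), giving $n\equiv 1\pmod 6$ for case (2) and $n\equiv 4\pmod 6$ for case (3); in both, $F_n$ is odd and $F_{n-1}$ is even, so all three of $(F_n^2-3)/2$, $(F_n^2+1)/2$, and $(F_n^2-F_{n-1}^2-1)/2$ are integers, while nonnegativity follows from $F_n>F_{n-1}$ (valid for the relevant $n\ge 4$) together with $F_n\ge 2$.

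The genuine obstacle here is organizational rather than computational: one must confirm that these parity conditions partition $\{n\ge 2\}$ into precisely the stated residue classes mod $6$ and that no admissible index violates nonnegativity, since it is this bookkeeping, and not any hard identity, that forces the three-case shape of the result. Once it is in place, each algebraic check is a one-line consequence of Cassini's identity or its square.
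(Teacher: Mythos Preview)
Your proof is correct and follows essentially the same approach as the paper: direct substitution of the proposed $(x,y)$ and reduction to (the square of) Cassini's identity, followed by the parity/nonnegativity bookkeeping that forces the mod~$6$ case split. The only cosmetic difference is that for case~(1) the paper reaches the squared Cassini identity via the intermediate relation $F_n^2-F_{n+1}^2-F_{n-1}^2=-2F_{n+1}F_{n-1}$, whereas you recognize the equivalent factorization $(F_n^2-F_nF_{n-1}-F_{n-1}^2)^2=1$ directly; these are the same identity at adjacent indices.
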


\begin{proof}
We prove \eqref{e3}. The other two are similar.  We have
$$F^2_n \ =\ (F_{n+1} - F_{n-1})^2\ =\ F_{n+1}^2 - 2F_{n+1}F_{n-1} + F_{n-1}^2,$$
so 
\begin{equation}\label{e4}F^2_n - F_{n+1}^2 - F_{n-1}^2 \ =\ -2F_{n+1}F_{n-1}.\end{equation}

By the Cassini's identity\footnote{For each integer $n$, $F_{n-1}F_{n+1}-F_n^2 = (-1)^n$.},
\begin{equation}\label{e5}(F_{n-1}F_{n+1}-F_n^2)^2\ =\ 1\ \Longrightarrow\ F^2_{n-1}F^2_{n+1}-2F_{n-1}F_n^2F_{n+1} + F_n^4\ =\ 1.\end{equation}

From \eqref{e4} and \eqref{e5}, we obtain
$$F^2_{n-1}F^2_{n+1}+F^2_n(F^2_n - F_{n+1}^2 - F_{n-1}^2) + F_n^4\ =\ 1.$$
Hence,
$$2F_n^4 + F^2_{n-1}F^2_{n+1} - F^2_{n-1}F_n^2 \ =\ 1 + F_{n}^2 F^2_{n+1}.$$
Adding $-F_n^2-F^2_{n+1}$ to both sides gives
$$(2F_n^2-F^2_{n-1}-1)F_n^2 + (F^2_{n-1}-1)F^2_{n+1} \ =\ (F_n^2-1)(F_{n+1}^2-1).$$
Therefore,
$$\left(F_n^2 - \frac{F^2_{n-1}+1}{2}\right)F^2_{n} + \frac{F^2_{n-1}-1}{2}F^2_{n+1}\ =\ \frac{(F^2_n-1)(F^2_{n+1}-1)}{2}.$$

For $n\ge 2$,
$$F_n^2 - \frac{F^2_{n-1}+1}{2}\ \ge\ 0\mbox{ and }F^2_{n-1}-1\ \ge\ 0.$$
Furthermore, $F_{n-1}$ is odd, so $(F^2_{n-1}+1)/2$ is an integer. 
\end{proof}

The solution when $(a,b) = (F_n^3, F_{n+1}^3)$ is even more interesting with (alternating) sums of Fibonacci numbers cubed. 

\begin{thm}\cite[Theorem 1.5]{CCKKLMYY} For $n\ge 2$, we have
\begin{align}
    \label{e6}\left(\sum_{i=1}^{2n-1}(-1)^{i-1}F_i^3\right)F^3_{2n-1} + \left(\sum_{i=2}^{2n-2} F_i^3\right)F^3_{2n}&\ =\ \frac{(F^3_{2n-1}-1)(F^3_{2n}-1)}{2};\\
    1+\left(\sum_{i=1}^{2n}(-1)^{i}F_i^3-1\right)F^3_{2n} + \left(\sum_{i=2}^{2n-1} F_i^3\right)F^3_{2n+1}&\ =\ \frac{(F^3_{2n}-1)(F^3_{2n+1}-1)}{2}.\nonumber
\end{align}
\end{thm}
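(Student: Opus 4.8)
The plan is to treat the two displayed identities as the two parity cases of a single statement. Write $m$ for the common running index, so that $m=2n-1$ in the first identity and $m=2n$ in the second, and abbreviate $A_m := \sum_{i=1}^{m}(-1)^{i-1}F_i^3$ and $B_m := \sum_{i=2}^{m}F_i^3$. First I would expand the right-hand side $\tfrac{1}{2}(F_m^3-1)(F_{m+1}^3-1)$ and rearrange. A short bookkeeping check shows that in the first identity the coefficient of $F_m^3$ is $A_m$ (with $m$ odd) while in the second it is $-A_m-1$ (with $m$ even), and that both identities collapse to the single relation
\[
(-1)^{m-1}(2A_m+1)F_m^3 + (2B_{m-1}+1)F_{m+1}^3 \;=\; F_m^3F_{m+1}^3 + (-1)^{m-1}. \qquad (\star)
\]
The extra $+1$ and the sign flip in the second identity are exactly absorbed by the factor $(-1)^{m-1}$, which by Cassini's identity equals $-(F_{m-1}F_{m+1}-F_m^2)$. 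This reduction is the conceptual point: the parity of $m$ controls the sign in Cassini and hence which of \eqref{e1}, \eqref{e2} is in force, explaining why the two cases look different.

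Next I would evaluate the two partial sums in closed form. The main tool is the cubing identity $F_k^3=\tfrac{1}{5}\big(F_{3k}-3(-1)^kF_k\big)$, which turns $A_m$ and $B_{m-1}$ into combinations of $\sum(-1)^{i-1}F_{3i}$, $\sum F_{3i}$, $\sum(-1)^iF_i$ and $\sum F_i$. Each of these telescopes: using that $(F_{3i})_i$ satisfies $F_{3(i+1)}=4F_{3i}+F_{3(i-1)}$ together with $F_{n+3}-F_n=2F_{n+1}$, one obtains $\sum_{i=1}^{m}(-1)^{i-1}F_{3i}=\tfrac{1}{2}\big((-1)^{m-1}F_{3m+1}+1\big)$ and $\sum_{i=1}^{m}F_{3i}=\tfrac{1}{2}(F_{3m+2}-1)$, while the alternating and plain sums of $F_i$ are standard. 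Collecting terms gives the clean closed forms
\[
2A_m+1=\frac{(-1)^{m-1}F_{3m+1}+6F_{m+2}}{5}, \qquad 2B_{m-1}+1=\frac{F_{3m-1}+6(-1)^{m}F_{m-2}}{5}.
\]

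With these in hand, the final step is to verify $(\star)$. I would substitute the closed forms, clear the factor $5$, and rewrite $F_{3m+1}$ and $F_{3m-1}$ using the cube expansions $F_{3m+1}=F_{m+1}^3+3F_{m+1}F_m^2-F_m^3$ and $F_{3m-1}=F_{m+1}^3+3F_{m+1}F_m^2-6F_m^3-3(-1)^mF_m$ (both consequences of the addition formula), replacing $F_{m+2},F_{m-2},F_{m-1}$ by their linear expressions in $p:=F_m$ and $q:=F_{m+1}$. After this, $(\star)$ becomes the claim that a fixed homogeneous polynomial $E_0(p,q)$ of degree $6$ plus $(-1)^m$ times a polynomial $E_1(p,q)$ vanishes on Fibonacci pairs. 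Writing $D:=q^2-pq-p^2$, so that $D=(-1)^m$ by Cassini and hence $D^2=1$, I expect the whole expression to collapse to the free polynomial identity $E_0+D\,E_1=5D-5D^3$, which evaluates to $0$ precisely because $D^3=D$ when $D^2=1$. I would then note that the sums appearing as coefficients are manifestly nonnegative integers in the relevant ranges, so that by Theorem \ref{ot} these are genuinely the solutions of \eqref{e1} and \eqref{e2}.

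The main obstacle is this concluding polynomial identity: it is entirely mechanical but genuinely lengthy, and the delicate point is that it is \emph{not} a consequence of $D=(-1)^m$ alone — one must also use $D^2=1$ (equivalently, that $(-1)^m$ squares to $1$), which is exactly what forces the degree-two remainder $5D-5D^3$ to vanish rather than $\Phi:=E_0+D\,E_1$ being divisible by $D\mp 1$ on its own. To keep the computation honest and checkable, I would organize it by parity, verifying $E_0+E_1=0$ on the conic $D=1$ and $E_0-E_1=0$ on the conic $D=-1$, each after the reduction $q^2=pq+p^2\pm 1$; this replaces one long expansion by two shorter ones and makes transparent the role of Cassini's sign.
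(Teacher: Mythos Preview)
Your plan is sound and would yield a correct proof, but it follows a genuinely different route from the paper. The paper treats only the first identity explicitly and proceeds by quoting Frontczak's closed forms $\sum_{i=1}^m F_i^3=\tfrac14(F_{3m+3}+F_{3m})-F_{m+1}^3-F_m^3+\tfrac12$ (and the alternating analogue), then substitutes the triple-index identity $F_{3n}=5F_n^3+3(-1)^nF_n$ and grinds through a long chain of Fibonacci manipulations, repeatedly invoking Cassini at the level of specific indices. Your approach instead (i) merges the two displayed identities into the single relation $(\star)$ indexed by the parity of $m$, (ii) derives your own closed forms for $2A_m+1$ and $2B_{m-1}+1$ via $F_k^3=\tfrac15(F_{3k}-3(-1)^kF_k)$ and elementary telescoping, and (iii) reduces the endgame to a polynomial identity in $p=F_m$, $q=F_{m+1}$ modulo the Cassini conic $D=q^2-pq-p^2=\pm1$. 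The unification in (i) is a real conceptual gain and explains cleanly why one case lands in \eqref{e1} and the other in \eqref{e2}; the reduction in (iii) is more systematic than the paper's ad hoc rewriting and is, as you note, amenable to a clean case split on $D=\pm1$. One small caveat: your specific prediction $E_0+DE_1=5D-5D^3$ is a guess about the exact residue and need not be literally correct; what your verification on the two conics actually establishes is that the expression lies in the ideal generated by $D^2-1$, which is all you need. Your closing remark on nonnegativity and integrality of the coefficients is adequate; the paper omits this check entirely in its proof of \eqref{e6}.
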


\begin{proof}[Proof of \eqref{e6}]
We recall  \cite[Theorem 1]{Fr}, which gives a formula for the (alternating) sum of Fibonacci numbers cubed: for $m\ge 1$,
\begin{align*}
    \sum_{i=1}^m F_i^3&\ =\ \frac{1}{4}(F_{3m+3}+F_{3m}) - F^3_{m+1} - F_m^3 + \frac{1}{2};\\
    \sum_{i=1}^m (-1)^iF_i^3&\ =\ \frac{1}{4}((-1)^mF_{3m+3} + (-1)^{m+1}F_{3m}) - (-1)^m F^3_{m+1} - (-1)^{m+1}F_m^3 + \frac{1}{2}.
\end{align*}
Furthermore, the well-known identity $F_{3n} = 5F_n^3 + 3(-1)^nF_n$ gives
$$F_{6n}\ =\ 5F^3_{2n} + 3F_{2n}, F_{6n-3}\ =\ 5F^3_{2n-1}-3F_{2n-1}, \mbox{ and } F_{6n-6}\ =\ 5F^3_{2n-2}+3F_{2n-2}.$$
These identities allow us to rewrite the left side of \eqref{e6}, denoted by $T(n)$, as
\begin{align}
    T(n)\ =\ &\left(\frac{F_{6n}}{4} - \frac{F_{6n-3}}{4} - F^3_{2n} + F_{2n-1}^3 - \frac{1}{2}\right)F^3_{2n-1}+\nonumber\\
    &  \left(\frac{F_{6n-3}}{4} + \frac{F_{6n-6}}{4} - F^3_{2n-1} - F^3_{2n-2} - \frac{1}{2}\right)F^3_{2n}\nonumber\\
    \ =\ &\frac{F_{6n}-F_{6n-3}}{4}F^3_{2n-1} + F^6_{2n-1} + \frac{F_{6n-3} + F_{6n-6}}{4}F^3_{2n}-\nonumber\\
    &F^3_{2n-2}F^3_{2n} - 2F^3_{2n-1}F^3_{2n}-\frac{F^3_{2n-1} + F^3_{2n}}{2}\nonumber\\
    \ =\ &\frac{5}{4}F^3_{2n-1}F^3_{2n} + \frac{3}{4}F^3_{2n-1}F_{2n} - \frac{5}{4}F^6_{2n-1} + \frac{3}{4}F^4_{2n-1} + F^6_{2n-1} + \frac{5}{4}F^3_{2n-1}F^3_{2n} -\nonumber\\
    &\frac{3}{4}F_{2n-1}F^3_{2n} + \frac{5}{4}F^3_{2n-2}F^3_{2n} + \frac{3}{4}F_{2n-2}F^3_{2n} - F^3_{2n-2}F^3_{2n} - \nonumber\\
    &2F^3_{2n-1}F^3_{2n}-\frac{F^3_{2n-1} + F^3_{2n}}{2}\nonumber\\
    \ =\  &- \frac{1}{4}F^6_{2n-1} + \frac{1}{4}F^3_{2n-2}F^3_{2n} + \frac{3}{4}F_{2n-1}^3F_{2n} + \frac{3}{4}F^4_{2n-1}-\nonumber\\
    &\label{e7}\frac{3}{4}F_{2n-1}F^3_{2n} + \frac{3}{4}F_{2n-2}F^3_{2n} + \frac{F^3_{2n-1}F^3_{2n}-F^3_{2n-1}-F^3_{2n}}{2}.
\end{align} 

Cubing both sides of the Cassini's identity $F_{2n-2}F_{2n} = -1 + F^2_{2n-1}$, we have
\begin{equation}\label{e8}
F_{2n-2}^3F^3_{2n}\ =\ F^6_{2n-1} - 3F^4_{2n-1} + 3F^2_{2n-1} - 1;\end{equation}
additionally
\begin{equation}\label{e9}F_{2n-2}F^3_{2n}\ =\ F^2_{2n}F_{2n-2}F_{2n}\ =\ F^2_{2n}(F^2_{2n-1}-1).\end{equation}

From \eqref{e7}, \eqref{e8}, and \eqref{e9}, we obtain
\begin{align*}
T(n)\ =\ &- \frac{1}{4}F^6_{2n-1} + \frac{1}{4}\left(F^6_{2n-1} - 3F^4_{2n-1} + 3F^2_{2n-1} - 1\right) + \frac{3}{4}F_{2n-1}^3F_{2n} + \frac{3}{4}F^4_{2n-1}-\\
    &\frac{3}{4}F_{2n-1}F^3_{2n} + \frac{3}{4}(F^2_{2n-1}F^2_{2n} - F^2_{2n}) + \frac{F^3_{2n-1}F^3_{2n}-F^3_{2n-1}-F^3_{2n}}{2}\\
    \ =\ &\frac{3}{4}F^2_{2n-1} - \frac{1}{4} + \frac{3}{4}F_{2n-1}^3F_{2n} - \frac{3}{4}F_{2n-1}F^3_{2n} + \frac{3}{4}F_{2n-1}^2F_{2n}^2 - \frac{3}{4}F_{2n}^2 + \\
    &\frac{F^3_{2n-1}F^3_{2n}-F^3_{2n-1}-F^3_{2n}}{2}\\
    \ =\ &\frac{3}{4}(F^2_{2n-1}-F_{2n}^2) - \frac{3}{4} + \frac{3}{4}F_{2n-1}F_{2n}(F_{2n-1}^2-F^2_{2n}+F_{2n-1}F_{2n})+\\
    &\frac{(F^3_{2n-1}-1)(F^3_{2n}-1)}{2}\\
    \ =\ &\frac{3}{4}(F^2_{2n-1}-F_{2n}^2-1) + \frac{3}{4}F_{2n-1}F_{2n}(\underbrace{F_{2n-1}^2-F^2_{2n}+(F_{2n}-F_{2n-2})F_{2n}}_{=1})+\\
    &\frac{(F^3_{2n-1}-1)(F^3_{2n}-1)}{2}\\
    \ =\ &\frac{3}{4}(F^2_{2n-1}-F_{2n}^2+F_{2n-1}F_{2n}-1) + \frac{(F^3_{2n-1}-1)(F^3_{2n}-1)}{2}\\
    \ =\ &\frac{3}{4}(F^2_{2n-1}-F_{2n}F_{2n-2} - 1) + \frac{(F^3_{2n-1}-1)(F^3_{2n}-1)}{2}\\
    \ =\  &\frac{(F^3_{2n-1}-1)(F^3_{2n}-1)}{2}.
\end{align*}
\end{proof}

Of course, we can ask for the solution when $a$ and $b$ are different powers of consecutive Fibonacci numbers. 

\begin{prob}\normalfont
For $(i,j)\in \mathbb{N}^3$, find the solution $(x,y)$ when $(a,b) = (F_n^i, F_{n+1}^j)$ as $n$ varies.    
\end{prob}

\subsection{Balancing numbers and Lucas-balancing numbers}

A positive integer $n$ is called \textit{balancing} \cite{BP} if 
$$1 + 2 + \cdots + (n-1)\ =\ (n+1) + \cdots + (n+d), \mbox{ for some nonnegative integer }d.$$
The sequence of balancing numbers is denoted by $(B_n)_{n=1}^\infty$. By \cite[(9)]{BP}, $(B_n)_{n=1}^\infty$ can be defined recursively with $B_1 = 1, B_2 = 6$, and $B_{n} = 6B_{n-1} - B_{n-2}$ for $n\ge 3$ (\seqnum{A001109}). It is easy to verify that $\gcd(B_{n}, B_{n+1}) = \gcd(B_{2n-1}, B_{2n+1}) = 1$ for every $n\in \mathbb{N}$. 

For each $n$, the $n$\textsuperscript{th} \textit{Lucas-balancing} number $C_n$ is given by $C_n = \sqrt{8B_n^2+1}$. Results in \cite{Pan} suggest that $(C_n)_{n=1}^\infty$ is associated with $(B_n)_{n=1}^\infty$ in the same way Lucas numbers are associated with Fibonacci numbers. By \cite[Theorem 2.5]{Pan}, we can also define $(C_n)_{n=1}^\infty$ recursively as $C_1 = 3$, $C_2 = 17$, and $C_n = 6C_{n-1} - C_{n-2}$ for $n\ge 3$ (\seqnum{A001541}).

\begin{thm}\cite[Theorem 2.1]{Da}
    For $n\ge 1$,
    \begin{align}\label{e10} (x^{(0)}(B_{2n-1}, B_{2n}),\ y^{(0)}(B_{2n-1}, B_{2n}))&\ =\ \left(\frac{B_{2n-1}-1}{2}, b_{2n-1}\right);\\
    (x^{(1)}(B_{2n}, B_{2n+1}),\ y^{(1)}(B_{2n}, B_{2n+1})) &\ =\  \left(b_{2n+1}, \frac{B_{2n-1}-1}{2}\right),\nonumber
    \end{align}
where 
$$b_m \ =\ \frac{(1+\sqrt{2})^{2m-1} - (1-\sqrt{2})^{2m-1}}{4\sqrt{2}}-\frac{1}{2}, \quad m\in\mathbb{N}.$$
\end{thm}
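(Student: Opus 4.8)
The plan is to lean on Theorem~\ref{ot}: since exactly one of \eqref{e1} and \eqref{e2} admits a nonnegative integral solution and that solution is unique, it suffices, for the first claim, to check that the proposed pair $\left(\tfrac{B_{2n-1}-1}{2},\,b_{2n-1}\right)$ is a nonnegative integral solution of \eqref{e1} with $(a,b)=(B_{2n-1},B_{2n})$, and, for the second claim, that $\left(b_{2n+1},\,\tfrac{B_{2n-1}-1}{2}\right)$ is a nonnegative integral solution of \eqref{e2} with $(a,b)=(B_{2n},B_{2n+1})$. The bookkeeping is cleanest through closed forms. Writing $\phi=1+\sqrt2$ and $\psi=1-\sqrt2$, so that $\phi\psi=-1$ and $\phi+\psi=2$, the recurrence $B_n=6B_{n-1}-B_{n-2}$ has characteristic roots $\phi^2,\psi^2$, whence $B_n=\tfrac{\phi^{2n}-\psi^{2n}}{4\sqrt2}$; likewise the defining formula gives $2b_m+1=\tfrac{\phi^{2m-1}-\psi^{2m-1}}{2\sqrt2}$. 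The companion sequence $P_k:=\tfrac{\phi^{k}-\psi^{k}}{2\sqrt2}$ is the integer Pell sequence, odd at odd indices, so $b_m=\tfrac12(P_{2m-1}-1)$ is a nonnegative integer. The mod-$2$ recurrence $B_n\equiv B_{n-2}$ gives that $B_{2n-1}$ is odd, so $\tfrac{B_{2n-1}-1}{2}\in\mathbb{Z}_{\ge0}$.

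With integrality and nonnegativity in hand, each claim reduces to a single algebraic identity. Substituting into \eqref{e1} and clearing denominators, the first claim is equivalent to
\[
  B_{2n}\,(2b_{2n-1}+1)\ =\ B_{2n-1}\,(B_{2n}-B_{2n-1})+1,
\]
while substituting into \eqref{e2} turns the second claim into
\[
  B_{2n}\,(2b_{2n+1}+1)+B_{2n+1}B_{2n-1}\ =\ B_{2n}B_{2n+1}-1.
\]
I would prove both by expanding every term through the closed forms above (using $B_{2n}=\tfrac12 P_{4n}$, $2b_m+1=P_{2m-1}$, and $B_{2n}-B_{2n-1}=P_{4n-1}$). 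The crucial simplification is that in a product $\phi^{a}\psi^{b}$ the factor $(\phi\psi)^{\min(a,b)}=(-1)^{\min(a,b)}$ collapses, so after cancellation only low powers survive, and these are small integers: $\phi+\psi=2$ and $\phi^{3}+\psi^{3}=14$. Carrying this out, both identities collapse to the same elementary Pell relation, that a difference of products equals the constant $2$; for instance the first becomes $P_{4n}P_{4n-3}-P_{4n-2}P_{4n-1}=\tfrac{\phi^{3}+\psi^{3}+2}{8}=2$, and the second becomes $P_{4n+2}P_{4n-1}-P_{4n}P_{4n+1}=2$ by the analogous computation.

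The only genuine work is the expansion bookkeeping, and the one real subtlety is parity: the surviving sign $(-1)^{\min(a,b)}$ depends on the parity of the exponents, which is precisely why the even/odd alternation of the indices $2n-1$, $2n$, $2n+1$ matters, and why the two cases land on \eqref{e1} and \eqref{e2} respectively. (An alternative to the Binet expansion is induction on $n$ via $B_n=6B_{n-1}-B_{n-2}$ together with the Pell recurrence, but that forces one to carry several simultaneous invariants, whereas the closed-form route settles each identity in one stroke.) Once both reduced identities are verified, Theorem~\ref{ot} finishes the argument: each proposed pair is a bona fide nonnegative integral solution of the indicated equation, so by uniqueness and the exactly-one dichotomy it \emph{is} the solution, yielding the asserted values of $(x^{(0)},y^{(0)})$ and $(x^{(1)},y^{(1)})$.
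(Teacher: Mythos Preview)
Your argument is correct and complete (indeed more complete than the paper's, which only verifies \eqref{e10} and leaves the second line to the reader), but it proceeds by a genuinely different mechanism. The paper stays purely on the recurrence side: it invokes the nonhomogeneous recursion $b_n=6b_{n-1}-b_{n-2}+2$, the telescoping identity $b_{n+1}-b_n=2B_n$, and the balancing Cassini identity $B_{n+1}B_{n-1}-B_n^2=-1$, and then manipulates $B_{2n-1}(B_{2n-1}-1)+2B_{2n}b_{2n-1}$ symbolically until it collapses to $(B_{2n-1}-1)(B_{2n}-1)$. You instead pass to closed forms in $\phi=1+\sqrt2$, $\psi=1-\sqrt2$, recognize $2b_m+1=P_{2m-1}$ and $B_n=\tfrac12P_{2n}$ in terms of the Pell numbers, and reduce each equation to a single Catalan-type identity $P_aP_d-P_bP_c=2$ with $a+d=b+c$, which falls out from $(\phi\psi)^{\min}=\pm1$. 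Your route has two advantages: it makes the integrality and nonnegativity of the proposed solutions explicit (the paper is silent on why $b_m\in\mathbb{Z}_{\ge0}$), and it treats both lines uniformly with the same one-line Pell computation. The paper's route has the virtue of being entirely over $\mathbb{Z}$ and of exhibiting the precise cited identities (\cite[Corollary 3.4.2]{Ra}, \cite[Theorem 5.1(a)]{BP}) that drive the result. One small presentational gap in your write-up: the passage from $B_{2n}(2b_{2n+1}+1)+B_{2n+1}B_{2n-1}=B_{2n}B_{2n+1}-1$ to $P_{4n+2}P_{4n-1}-P_{4n}P_{4n+1}=2$ uses the intermediate factoring $B_{2n}B_{2n+1}-B_{2n+1}B_{2n-1}=B_{2n+1}(B_{2n}-B_{2n-1})=\tfrac12P_{4n+2}P_{4n-1}$, which you should spell out.
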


\begin{proof}[Proof of \eqref{e10}]
The sequence $(b_n)_{n=1}^\infty$ can be defined recursively as: $b_1 = 0, b_2 = 2$, and $b_n = 6b_{n-1}-b_{n-2}+2$ for $n\ge 3$. Furthermore, by \cite[Corollary 3.4.2]{Ra},
\begin{equation}\label{e11}
    b_{n+1} - b_{n}\ =\ 2B_n, \mbox{ for all }n\in \mathbb{N}.
\end{equation}
We also use \cite[Theorem 5.1 (a)]{BP}, which states that
\begin{equation}\label{e12}B_{n+1}B_{n-1}-B^2_n\ =\ -1, \mbox{ for all }n\ge 2.\end{equation}

Observe that \eqref{e10} holds for $n = 1$. For $n\ge 2$, we have
    \begin{align*}
        &B_{2n-1}(B_{2n-1}-1)+2B_{2n}b_{2n-1}\\
        &\ =\ B_{2n-1}(B_{2n-1}-1) + B_{2n}(3b_{2n-1}-b_{2n-1})\\
        &\ =\ B^2_{2n-1}-B_{2n-1} + B_{2n}\left(\frac{b_{2n}+b_{2n-2}-2}{2} - b_{2n-1}\right)\\
        &\ =\ B^2_{2n-1} - B_{2n-1} + B_{2n}\left(\frac{b_{2n}-b_{2n-1}}{2}-\frac{b_{2n-1}-b_{2n-2}}{2}\right) - B_{2n}\\
        &\ =\ B^2_{2n-1} - B_{2n-1} + B_{2n}(B_{2n-1} - B_{2n-2}) - B_{2n}\quad\quad\mbox{ by \eqref{e11}}\\
        &\ =\ (B^2_{2n-1} - B_{2n-2}B_{2n}) +B_{2n-1}B_{2n} - B_{2n-1} - B_{2n}\\
        &\ =\ (B_{2n-1}-1)(B_{2n}-1)\quad\quad\mbox{ by \eqref{e12}}.
    \end{align*}
\end{proof}

The next theorem summarizes Davala's other neat identities that involve $B_n$'s and $C_n$'s \cite{Da}.

\begin{thm}\cite[Theorems 2.2, 2.3, 2.4, and 2.6]{Da}
    For $n\ge 1$, 
     \begin{align*}(x^{(0)}(B_{4n-3}, B_{4n-1}),\ y^{(0)}(B_{4n-3},B_{4n-1}))&\ =\ \left(\sum_{i=1}^{n-1}C_{4i}, \sum_{i=1}^{n-1}C_{4i}\right);\\
    (x^{(1)}(B_{4n-1}, B_{4n+1}),\ y^{(1)}(B_{4n-1}, B_{4n+1})) &\ =\  \left(\sum_{i=1}^n C_{4i}, \sum_{i=1}^{n-1} C_{4i}\right);\\
    \left(x^{(1)}\left(\frac{B_{4n}}{6}, \frac{B_{4n+2}}{6}\right),\ y^{(1)}\left(\frac{B_{4n}}{6}, \frac{B_{4n+2}}{6}\right)\right)&\ =\ \left(\sum_{i=1}^{2n}(-1)^i C_{2i}, \sum_{i=1}^{n-1}C_{4i}\right);\\
    \left(x^{(0)}\left(\frac{B_{4n-2}}{6}, \frac{B_{4n}}{6}\right),\ y^{(0)}\left(\frac{B_{4n-2}}{6}, \frac{B_{4n}}{6}\right)\right)&\ =\ \left(\sum_{i=1}^{n-1} C_{4i}, \sum_{i=1}^{2n-2}(-1)^iC_{2i}\right);\\
    \left(x^{(0)}(B_n, C_n),\ y^{(0)}(B_n, C_n)\right)&\ =\ (B_{n-1} + b_{n-1}, b_n);\\
    \left(x^{(1)}(C_{2n-1}, C_{2n}),\ y^{(1)}(C_{2n-1}, C_{2n})\right) &\ =\ \left(B_{2n}-\sum_{i=0}^{n-1}C_{2i}, \sum_{i=1}^{n-1}C_{2i}\right);\\
    \left(x^{(0)}(C_{2n}, C_{2n+1}),\ y^{(0)}(C_{2n}, C_{2n+1})\right) &\ =\ \left(\sum_{i=1}^n C_{2i}, B_{2n}-\sum_{i=0}^{n-1}C_{2i}\right).
    \end{align*}
\end{thm}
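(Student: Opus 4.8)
The plan is to dispatch all seven identities by the single uniform scheme already exhibited for \eqref{e10}. By Theorem \ref{ot}, for each line it suffices to check that the proposed pair $(x,y)$ consists of nonnegative integers and satisfies the indicated equation---\eqref{e1} when the superscript is $(0)$ and \eqref{e2} when it is $(1)$---because uniqueness then forces that pair to be exactly $(x^{(0)},y^{(0)})$ or $(x^{(1)},y^{(1)})$. Thus the theorem reduces to seven algebraic verifications of the shape $ax+by=(a-1)(b-1)/2$ (or $1+ax+by=\cdots$), each obtained by expanding the left-hand side and collapsing it with balancing-number identities, precisely as in the proof of \eqref{e10}.

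Next I would assemble the toolkit. Alongside the Cassini-type identity \eqref{e12} and the relation $C_n^2=8B_n^2+1$ (immediate from $C_n=\sqrt{8B_n^2+1}$), I would record the difference formulas $B_{n+1}-B_{n-1}=2C_n$ and, more generally, $B_{m+k}-B_{m-k}=2B_kC_m$, together with $C_{n+1}-C_{n-1}=16B_n$ and the doubling relation $B_{2n}=2B_nC_n$; each follows at once from the Binet expressions $B_n=\tfrac{\lambda^n-\mu^n}{4\sqrt2}$ and $C_n=\tfrac{\lambda^n+\mu^n}{2}$, where $\lambda=(1+\sqrt2)^2$, $\mu=(1-\sqrt2)^2$, $\lambda\mu=1$. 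Telescoping these turns every non-alternating sum in the statement into a closed form: $\sum_{i=1}^{n}C_{2i}=\tfrac{B_{2n+1}-1}{2}$, $\sum_{i=0}^{n-1}C_{2i}=\tfrac{B_{2n-1}+1}{2}$ (using $C_0=1$), and $\sum_{i=1}^{n-1}C_{4i}=\tfrac{B_{4n-2}-6}{12}$ (from the $k=2$ formula, since $B_2=6$).

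With the toolkit in hand I would dispatch the non-alternating lines first. For the pairs built from a single subscript family (lines one, two, six, and seven) I would substitute the telescoped closed forms together with $B_{2n}=2B_nC_n$, expand $ax+by$, and reduce to $(a-1)(b-1)/2$ using only \eqref{e12} and $C_n^2=8B_n^2+1$; the $(B_n,C_n)$ line runs instead through the $b$-sequence exactly as \eqref{e10}, via \eqref{e11}. In parallel I would settle integrality, nonnegativity, and coprimality: each proposed $x,y$ is an integer sum (or, for line five, an integer by the $b$-recurrence) and is manifestly $\ge 0$ for $n\ge1$; a one-line induction mod $6$ gives $B_{2m}\equiv 0\pmod 6$, so the quotients $B_{4n}/6$ and $B_{4n\pm2}/6$ are integers; and coprimality is immediate from $C_n^2-8B_n^2=1$ for the pair $(B_n,C_n)$, while for the quotient pairs the strong-divisibility property $\gcd(B_m,B_n)=B_{\gcd(m,n)}$ yields $\gcd(B_{4n},B_{4n+2})=B_2=6$, hence $\gcd(B_{4n}/6,\,B_{4n+2}/6)=1$.

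Finally, the main obstacle is the two alternating lines (the $B_{4n}/6$ and $B_{4n-2}/6$ cases), whose sums $\sum(-1)^iC_{2i}$ do not collapse under a single difference formula. Here I would pass to a geometric series through Binet: writing $(-1)^iC_{2i}=\tfrac12\big((-\lambda^2)^i+(-\mu^2)^i\big)$ and using the defining relation $\lambda^2+1=6\lambda$ (and its conjugate), the two geometric sums simplify to
\[
\sum_{i=1}^{m}(-1)^iC_{2i}\;=\;\frac{(-1)^mC_{2m+1}-3}{6},
\]
so that $\sum_{i=1}^{2n}(-1)^iC_{2i}=\tfrac{C_{4n+1}-3}{6}$ and $\sum_{i=1}^{2n-2}(-1)^iC_{2i}=\tfrac{C_{4n-3}-3}{6}$. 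Feeding these into \eqref{e2} and reducing as before closes those lines. The delicate points are the sign bookkeeping in this alternating closed form and confirming the evenness/sign conventions so that the resulting $x,y$ land in the nonnegative range; this is where arithmetic slips are most likely, and I would guard against them by spot-checking each formula at $n=1$ before the general reduction.
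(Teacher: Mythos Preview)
The paper does not actually prove this theorem; it is stated as a summary of results from Davala \cite{Da} with no accompanying argument, so there is no ``paper's own proof'' to compare against. Your plan is nevertheless sound and is exactly the template the paper uses for the one balancing-number identity it does verify, namely \eqref{e10}: check nonnegativity and integrality of the proposed $(x,y)$, expand $ax+by$ (or $1+ax+by$), and collapse using balancing/Lucas-balancing identities; uniqueness from Theorem~\ref{ot} then finishes. Your toolkit is correct (in particular $B_{n+1}-B_{n-1}=2C_n$, $B_{m+k}-B_{m-k}=2B_kC_m$, $B_{2n}=2B_nC_n$, and the telescoped closed forms for $\sum C_{2i}$ and $\sum C_{4i}$), and your Binet computation of the alternating sum $\sum_{i=1}^m(-1)^iC_{2i}=\big((-1)^mC_{2m+1}-3\big)/6$ via $\lambda^2+1=6\lambda$ is the right way to handle lines three and four. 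One small point to tidy: you should also record why $\gcd(B_{2n-1},B_{2n+1})=1$ and $\gcd(C_n,C_{n+1})=1$ (both follow from the recurrence together with the initial terms, or from strong divisibility for the $B$'s and $C_{n+1}=6C_n-C_{n-1}$ for the $C$'s), so that Theorem~\ref{ot} genuinely applies to every line.
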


\section{Sequences having the Fibonacci recurrence and arbitrary initial terms} \label{Flikeseq}
The main goal of this section is to generalize Theorem \ref{ori} to consecutive terms of sequences that have the Fibonacci recurrence but take different initial values. For $(u, v)\in \mathbb{N}^2$ with $\gcd(u,v) = 1$, define the sequence $(t^{(u,v)}_{n})_{n=1}^\infty$ as follows: $t^{(u,v)}_1 = u$, $t^{(u,v)}_2 = v$, and $t^{(u,v)}_{n} = t^{(u,v)}_{n-1} + t^{(u,v)}_{n-2}$ for $n\ge 3$. It follows that
$$t^{(u, v)}_n\ =\ F_{n-2}u + F_{n-1}v, \mbox{ for all }n\ge 1.$$
We establish formulas for the unique nonnegative integral solution $(x,y)$ to either 
$$t^{(u,v)}_{n}x + t^{(u,v)}_{n+1}y \ =\ \frac{(t^{(u,v)}_{n}-1)(t^{(u,v)}_{n+1}-1)}{2}$$
or
$$1 + t^{(u,v)}_{n}x +  t^{(u,v)}_{n+1}y \ =\ \frac{( t^{(u,v)}_{n}-1)( t^{(u,v)}_{n+1}-1)}{2}.$$

Note that $F_n$ is even if and only if $3$ divides $n$. Since the solution $(x,y)$ depends not only the parity of $F_n$ but also on the parity of $n$, we need to consider six cases in total. In each case, the parity of $u$ and $v$ has further influences on the solution.

First, we record some preliminary results.

\begin{lem}\label{l1}
Given $(u,v)\in \mathbb{N}^2$ with $\gcd(u,v) = 1$ and odd $u\ge 3$, there is a unique odd integer $r\in [1,u-1]$ such that either $vr\equiv 1\mod u$ or $vr\equiv -1\mod u$. Similarly, there is a unique even integer $s\in [1, u-1]$ such that either $vs \equiv 1\mod u$ or $vs\equiv -1\mod u$. 
\end{lem}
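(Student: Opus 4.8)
The plan is to reduce both claims to a single observation about the multiplicative inverse of $v$ modulo $u$. Since $\gcd(u,v)=1$, the residue $v$ is a unit modulo $u$, so there is a unique $w\in[1,u-1]$ with $vw\equiv 1\mod u$, namely the inverse of $v$ modulo $u$. Then $v(u-w)\equiv -1\mod u$ and $u-w\in[1,u-1]$, so I immediately produce one solution of each of the two congruences.

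First I would pin down the complete solution set. If $t\in[1,u-1]$ satisfies $vt\equiv 1\mod u$, then $t$ is the representative of $v^{-1}$ in $[1,u-1]$, hence $t=w$; similarly $vt\equiv -1\mod u$ forces $t=u-w$. Therefore the set of all $t\in[1,u-1]$ with $vt\equiv \pm 1\mod u$ is exactly $\{w,\,u-w\}$, and every such $t$ belongs to one of these two values.

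The crux is then a short parity count on this two-element set. Because $u$ is odd, $w\neq u-w$ (otherwise $2w=u$ would be even, a contradiction), so the two solutions are genuinely distinct. Moreover $w+(u-w)=u$ is odd, which forces $w$ and $u-w$ to have opposite parities: exactly one of them is odd and exactly one is even. Taking $r$ to be the odd element and $s$ the even element of $\{w,u-w\}$ then yields the desired unique odd solution and unique even solution. I would also note that the even element lies in $[2,u-1]\subseteq[1,u-1]$, so both genuinely sit in the prescribed range.

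There is no serious obstacle here; the argument is essentially the uniqueness of a modular inverse together with elementary bookkeeping. The only two points that require any care are the distinctness of $w$ and $u-w$ (which uses that $u$ is odd) and the opposite-parity conclusion (which uses that their sum $u$ is odd), and both are immediate from the hypothesis $u\geq 3$ being odd.
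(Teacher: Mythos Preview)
Your proof is correct and follows essentially the same approach as the paper: both arguments find the unique $x_1,x_2\in[1,u-1]$ with $vx_1\equiv 1$ and $vx_2\equiv -1\pmod u$, observe that $x_1+x_2=u$, and use that $u$ is odd to conclude the two have opposite parities. Your presentation is slightly more explicit (naming $w=v^{-1}$ and $u-w$ directly, and noting $w\neq u-w$), but the substance is identical.
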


\begin{proof}
Since $\gcd(u,v) = 1$, the set $\{1\cdot v, 2\cdot v, \ldots, u\cdot v\}$ is a complete modulo system of $u$. Hence, there are unique integers $x_1$ and $x_2\in [1, u-1]$ with $vx_1\equiv 1\mod u$ and $vx_2 \equiv -1\mod u$. It follows that $u$ divides $v(x_1+x_2)$, so $u$ divides $x_1 + x_2$. Furthermore, $2\le x_1 + x_2\le 2u-2$, so $x_1 + x_2 = u$. This implies that one of $x_1$ and $x_2$ is odd, while the other is even. 
\end{proof}

 Given an odd integer $u\ge 3$, we denote by 
$\mathbb{O}(u,v)$ the unique odd integer in $[1,u-1]$ such that $v\cdot \mathbb{O}(u,v)\equiv \pm 1\mod u$ 
and denote by
$\mathbb{E}(u,v)$ the unique even integer in $[1,u-1]$ such that $v\cdot \mathbb{E}(u,v)\equiv \pm 1\mod u$. Thanks to Lemma \ref{l1}, $\mathbb{O}(u,v)$ and $\mathbb{E}(u,v)$ are well-defined.

\begin{lem}\label{l2} 
Let $(u,v)\in \mathbb{N}^2$ with $\gcd(u,v) = 1$ and $2|u$. Pick an arbitrary odd $k\in [1, 2u-1]$. There is a unique odd integer $r\in [1, u]$ such that either $vr\equiv k\mod 2u$ or $vr\equiv -k\mod 2u$.
\end{lem}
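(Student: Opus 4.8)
The plan is to follow the template of the proof of Lemma \ref{l1}, replacing the modulus $u$ by $2u$ and exploiting a reflection symmetry to cut the solution interval in half. First I would record the parity observations that drive everything: since $2 \mid u$ and $\gcd(u,v)=1$, the integer $v$ must be odd, and consequently $\gcd(2u,v)=1$, so $v$ is invertible modulo $2u$. This coprimality means $\{1,2,\dots,2u\}$ is a complete residue system modulo $2u$ on which multiplication by $v$ acts as a bijection.

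Next I would produce the two candidate residues. Because $v$ is invertible, there are unique $x_1,x_2\in[1,2u]$ with $vx_1\equiv k$ and $vx_2\equiv -k \pmod{2u}$. Adding these gives $v(x_1+x_2)\equiv 0\pmod{2u}$, whence $2u \mid x_1+x_2$. Since $x_1+x_2\in[2,4u]$, the sum is either $2u$ or $4u$; the value $4u$ would force $x_1=x_2=2u$ and hence $k\equiv -k\pmod{2u}$, i.e. $u\mid k$, which is impossible for an odd $k\in[1,2u-1]$ (the only multiple of $u$ in that range is $u$ itself, which is even). Thus $x_1+x_2=2u$ and $x_1\ne x_2$, so $\{x_1,x_2\}=\{r_0,\,2u-r_0\}$ with exactly one element, say $r_0$, lying in $[1,u-1]$ and the other in $[u+1,2u-1]$; neither can equal $u$.

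The crucial parity point, which is where this lemma genuinely differs from Lemma \ref{l1}, is that here $x_1+x_2=2u$ is \emph{even}, so $x_1$ and $x_2$ share a parity, rather than being of opposite parity as in Lemma \ref{l1}. I would resolve this by showing the shared parity is forced to be odd: reducing $vx_i\equiv \pm k\pmod{2u}$ modulo $2$ and using that both $v$ and $k$ are odd shows each $x_i$ is odd. Hence $r_0$ is automatically an odd integer in $[1,u-1]\subseteq[1,u]$ satisfying $vr_0\equiv\pm k\pmod{2u}$, which gives existence.

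Finally, for uniqueness the only residues modulo $2u$ solving $vr\equiv k$ or $vr\equiv -k$ are $r_0$ and $2u-r_0$, and of these only $r_0$ lies in $[1,u]$ since $2u-r_0\ge u+1$; as a sanity check, $r=u$ is never a solution because $vu\equiv u\not\equiv\pm k\pmod{2u}$ for odd $k\ne u$. I do not anticipate a serious obstacle: the argument is pigeonhole together with the reflection $r\mapsto 2u-r$ pairing the two congruences. The one step I would flag as most error-prone is the parity bookkeeping, namely verifying that the forced common parity of $x_1,x_2$ is odd (so the claimed odd $r$ actually exists) rather than even, which hinges squarely on $v$ and $k$ both being odd.
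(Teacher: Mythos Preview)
Your argument is correct and follows essentially the same route as the paper: both proofs use that $\gcd(2u,v)=1$ to locate the unique $x_1,x_2$ with $vx_1\equiv k$ and $vx_2\equiv -k\pmod{2u}$, deduce $x_1+x_2=2u$, and then pick the representative in $[1,u]$ and check it is odd via the parity of $v$ and $k$. Your parity step (reducing the congruence modulo $2$) and your uniqueness step (only $r_0$ among $\{r_0,2u-r_0\}$ lands in $[1,u]$) are slightly more explicit than the paper's, but the underlying mechanism is identical.
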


\begin{proof}
We prove existence. Since $\gcd(2u, v) = 1$, the set $\{1\cdot v, 2\cdot v, \ldots, 2u\cdot v\}$ is a complete modulo system of $2u$. Let $x_1$ and $x_2$ be the unique integers in $[1, 2u-1]$ such that $vx_1\equiv k\mod 2u$ and $vx_2\equiv -k\mod 2u$. It follows that
$2u$ divides $(x_1 + x_2)$. Observe that $2\le x_1 + x_2 \le 4u-2$, so $x_1 + x_2 = 2u$. Hence, either $x_1\le u$ or $x_2\le u$. If $x_1\le u$, then because $2u$ divides $(vx_1 - k)$, $x_1$ must be odd and we set $r = x_1$. If $x_2\le u$, then because $2u$ divides $(vx_2+k)$, $x_2$ must be odd and we set $r = x_2$.

We prove uniqueness by contradiction. Suppose that there are two odd integers $x_1$ and $x_2$ in $[1, u]$ such that $vx_1\equiv k\mod 2u$ and $vx_2\equiv -k\mod 2u$. As above, $x_1 + x_2 = 2u$, so $x_1 = x_2 = u$, which is even. This contradicts the assumption that $x_1$ and $x_2$ are odd. 
\end{proof}

Given an even integer $u\ge 2$, we denote by $\mathbb{O}(u, v, k)$ the unique odd integer in $[1,u]$ such that 
$v\cdot \mathbb{O}(u, v, k)\equiv \pm k\mod 2u$.

\begin{thm}\label{genformodd}
Given $(u, v, n, r)\in \mathbb{Z}^4$ with odd $n$, it holds that

    \begin{align}\label{fe3}1&+\frac{1}{2}\left(rF_{n-1}+\frac{vr-1}{u}F_{n}-1\right)t^{(u,v)}_{n}+\nonumber\\
&\frac{1}{2}\left((u-r)F_{n-2}+\frac{(u-r)v+1}{u}F_{n-1}-1\right)t^{(u,v)}_{n+1}\ =\ \frac{(t^{(u,v)}_{n}-1)(t^{(u,v)}_{n+1}-1)}{2}
\end{align}
and
\begin{align}\label{fe4}&\frac{1}{2}\left(rF_{n-1}+\frac{vr+1}{u}F_{n}-1\right)t^{(u,v)}_{n} + \nonumber\\
&\frac{1}{2}\left((u-r)F_{n-2}+\frac{(u-r)v-1}{u}F_{n-1}-1\right)t^{(u,v)}_{n+1}\ =\ \frac{(t^{(u,v)}_{n}-1)(t^{(u,v)}_{n+1}-1)}{2}.
\end{align}
\end{thm}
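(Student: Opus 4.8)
The plan is to treat both \eqref{fe3} and \eqref{fe4} as algebraic (indeed rational) identities in the quantities $u,v,r$ and the Fibonacci numbers, and to reduce each to a single application of Cassini's identity. Abbreviate $a := t_n^{(u,v)} = F_{n-2}u + F_{n-1}v$ and $b := t_{n+1}^{(u,v)} = F_{n-1}u + F_n v$. After clearing the factor $\frac12$ and the additive constants, \eqref{fe4} is equivalent to
$$\left(rF_{n-1} + \frac{vr+1}{u}F_n\right)a + \left((u-r)F_{n-2} + \frac{(u-r)v-1}{u}F_{n-1}\right)b \ =\ ab + 1,$$
while \eqref{fe3} is equivalent to the same left-hand side but with the two $+1$'s in the numerators replaced by $-1$ and with the target $ab-1$. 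Thus it suffices to establish these two core identities.

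To prove the first, I would substitute $a = F_{n-2}u + F_{n-1}v$ and $b = F_{n-1}u + F_n v$ into the left-hand side and expand, collecting by the products $F_{n-1}F_{n-2},\ F_{n-1}^2,\ F_nF_{n-2},\ F_nF_{n-1}$ together with their coefficients in $u$ and $v$. Two simplifications occur at once: the parameter $r$ cancels completely (the coefficients of $a$ and $b$ merely split a fixed total, e.g. $r$ and $u-r$ recombine to $u$), and every term carrying the denominator $u$ combines into a multiple of $u$, clearing the fraction. What survives is
$$u^2F_{n-1}F_{n-2} + (uv-1)F_{n-1}^2 + (uv+1)F_nF_{n-2} + v^2F_nF_{n-1},$$
and subtracting $ab = u^2F_{n-2}F_{n-1} + uvF_{n-2}F_n + uvF_{n-1}^2 + v^2F_{n-1}F_n$ leaves exactly $F_nF_{n-2} - F_{n-1}^2$.

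The computation for \eqref{fe3} is identical up to the signs of the $\pm1$ in the numerators, and there the residual term is $-(F_nF_{n-2} - F_{n-1}^2)$. In both cases I would finish by invoking Cassini's identity in the shifted form $F_{n-2}F_n - F_{n-1}^2 = (-1)^{n-1}$; since $n$ is odd this equals $+1$, so the residuals are $+1$ for \eqref{fe4} and $-1$ for \eqref{fe3}, matching the targets $ab+1$ and $ab-1$ respectively.

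The calculation is routine, so the only real content is organizational: the expansion must be arranged so that the $r$-dependence and the $1/u$ terms visibly cancel, and one must notice that the hypothesis ``$n$ odd'' is used precisely to fix the sign in Cassini's identity as $+1$. This also clarifies, at the level of the six-case phenomenon noted in Section \ref{intro}, why the even-$n$ situation will instead require the companion identity with the opposite sign. I do not anticipate any genuine obstacle beyond careful bookkeeping of terms.
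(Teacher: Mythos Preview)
Your proposal is correct and follows essentially the same route as the paper: multiply through by $2$, substitute $t^{(u,v)}_n=F_{n-2}u+F_{n-1}v$ and $t^{(u,v)}_{n+1}=F_{n-1}u+F_nv$, expand and collect so that the $r$-dependence and the $1/u$ denominators cancel, and reduce everything to the single quantity $F_{n-2}F_n-F_{n-1}^2$, whose sign is fixed by Cassini and the parity of $n$. The only cosmetic difference is that the paper carries the constants $-t_n-t_{n+1}+1$ through the computation and lands on $(t_n-1)(t_{n+1}-1)$ directly, whereas you first isolate the equivalent ``core'' targets $ab\pm 1$; the algebra is the same.
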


\begin{proof}
We prove \eqref{fe3}. We have
\begin{align*}
    &2 + \left(rF_{n-1}+\frac{vr-1}{u}F_{n}-1\right)t^{(u,v)}_{n}+\\
    & \left((u-r)F_{n-2}+\frac{(u-r)v+1}{u}F_{n-1}-1\right)t^{(u,v)}_{n+1}\\
    \ =\ &1 + \left(rF_{n-1} + \frac{vr-1}{u}F_{n}\right)t^{(u,v)}_{n} + \\ 
    &\left((u-r)F_{n-2} + \frac{(u-r)v+1}{u}F_{n-1}\right)t^{(u,v)}_{n+1} - t^{(u,v)}_{n+1} - t^{(u, v)}_n +1\\
    \ =\ & 1 + urF_{n-2}F_{n-1} + (vr-1)F_{n-2}F_{n} + rvF^2_{n-1} + v \frac{vr-1}{u}F_{n-1}F_n + \\
    &u(u-r)F_{n-2}F_{n-1} + ((u-r)v+1)F^2_{n-1} + v(u-r)F_{n-2}F_n + \\
    &v\frac{(u-r)v+1}{u}F_{n-1}F_n - t^{(u,v)}_{n+1} - t^{(u, v)}_n +1\\
    \ =\ & 1 + u^2 F_{n-2}F_{n-1} + (uv-1)F_{n-2}F_n + (uv + 1)F^2_{n-1} + v^2 F_{n-1}F_n - \\
    &t^{(u,v)}_{n+1} - t^{(u, v)}_n + 1\\ 
    \ =\ & (1 + F^2_{n-1} - F_{n-2}F_n) + (uF_{n-2} + vF_{n-1})(uF_{n-1} + vF_n) - \\
    &t^{(u,v)}_{n+1} - t^{(u, v)}_n + 1\\
    \ =\ &t^{(u,v)}_{n}t^{(u,v)}_{n+1} - t^{(u,v)}_{n+1} - t^{(u, v)}_n + 1\\
    \ =\ &(t^{(u,v)}_n - 1)(t^{(u,v)}_{n+1}-1).
\end{align*}

To get \eqref{fe4}, we note
\begin{align*}
    &\left(rF_{n-1} + \frac{vr+1}{u}F_n - 1\right)t^{(u,v)}_n + \left((u-r)F_{n-2} + \frac{(u-r)v-1}{u}F_{n-1}-1\right)t_{n+1}^{(u,v)}\\
    \ =\ &  -1 + \left(rF_{n-1} + \frac{vr+1}{u}F_n\right)t^{(u,v)}_n + \left((u-r)F_{n-2} + \frac{(u-r)v-1}{u}F_{n-1}\right)t_{n+1}^{(u,v)}-\\
    &t^{(u,v)}_{n} - t^{(u,v)}_{n+1}+1\\
    \ =\ & -1 + urF_{n-2}F_{n-1} + (vr + 1)F_{n-2}F_n + vrF_{n-1}^2 + v\frac{vr+1}{u}F_{n-1}F_n + \\
    &(u-r)uF_{n-2}F_{n-1} + ((u-r)v-1)F^2_{n-1} + (u-r)vF_{n-2}F_n + \\
    & v \frac{(u-r)v-1}{u}F_{n-1}F_n -t^{(u,v)}_{n} - t^{(u,v)}_{n+1}+1 \\
    \ =\ &(-1 + F_{n-2}F_n  - F^2_{n-1}) + u^2F_{n-2}F_{n-1} + uvF_{n-2}F_n + uvF_{n-1}^2  + v^2 F_{n-1}F_n - \\
    &t^{(u,v)}_{n} - t^{(u,v)}_{n+1}+1 \\
    \ =\ &(uF_{n-2} + vF_{n-1})(uF_{n-1} + vF_n) - t^{(u,v)}_{n} - t^{(u,v)}_{n+1}+1\\
    \ =\ &t^{(u,v)}_nt^{(u,v)}_{n+1} - t^{(u,v)}_n - t^{(u,v)}_{n+1} + 1\ =\ (t^{(u,v)}_n - 1)(t^{(u,v)}_{n+1}-1).
\end{align*}
\end{proof}

Let 
\begin{align*}
&\Phi^{(0)}_{1}(u,v,n,r)\ :=\ \frac{1}{2}\left(rF_{n-1}+\frac{vr+1}{u}F_{n}-1\right),\\
&\Psi^{(0)}_{1}(u, v, n, r)\ :=\ \frac{1}{2}\left((u-r)F_{n-2}+\frac{(u-r)v-1}{u}F_{n-1}-1\right),\\
&\Phi^{(1)}_{1}(u,v,n,r)\ :=\ \frac{1}{2}\left(rF_{n-1}+\frac{vr-1}{u}F_{n}-1\right),\\
&\Psi^{(1)}_{1}(u, v, n, r)\ :=\ \frac{1}{2}\left((u-r)F_{n-2}+\frac{(u-r)v+1}{u}F_{n-1}-1\right).
\end{align*}
The subscript $1$ of $\Phi$ and $\Psi$ indicates that we are considering odd $n$. The superscript of $(0)$ or $(1)$ indicates whether ($\Phi$, $\Psi$) is a solution of \eqref{e1} or \eqref{e2}, respectively.

\begin{thm}\label{genformeven}
Given $(u, v, n, r)\in \mathbb{Z}^4$ with even $n$, it holds that

    \begin{align}\label{fe1}1&+\frac{1}{2}\left((u-r)F_{n-1}+\frac{(u-r)v+1}{u}F_{n}-1\right)t^{(u,v)}_{n}+\nonumber\\
&\frac{1}{2}\left(rF_{n-2}+\frac{vr-1}{u}F_{n-1}-1\right)t^{(u,v)}_{n+1}\ =\ \frac{(t^{(u,v)}_{n}-1)(t^{(u,v)}_{n+1}-1)}{2}
\end{align}
and
\begin{align}\label{fe2}&\frac{1}{2}\left((u-r)F_{n-1}+\frac{(u-r)v-1}{u}F_{n}-1\right)t^{(u,v)}_{n} + \nonumber\\
&\frac{1}{2}\left(rF_{n-2}+\frac{vr+1}{u}F_{n-1}-1\right)t^{(u,v)}_{n+1}\ =\ \frac{(t^{(u,v)}_{n}-1)(t^{(u,v)}_{n+1}-1)}{2}.
\end{align}
\end{thm}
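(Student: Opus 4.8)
The plan is to prove Theorem~\ref{genformeven} by the same direct-expansion strategy that establishes Theorem~\ref{genformodd}, since the even-$n$ statement shares an identical algebraic skeleton with the odd-$n$ one and departs from it only through a single parity-dependent sign. I would verify \eqref{fe1} in full and then dispatch \eqref{fe2} by the same routine.

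For \eqref{fe1}, first I would clear the factors of $\tfrac12$ by doubling, reducing the goal to
$$2 + \left((u-r)F_{n-1}+\frac{(u-r)v+1}{u}F_n-1\right)t^{(u,v)}_{n} + \left(rF_{n-2}+\frac{vr-1}{u}F_{n-1}-1\right)t^{(u,v)}_{n+1} \ =\ (t^{(u,v)}_{n}-1)(t^{(u,v)}_{n+1}-1).$$
Then I would substitute $t^{(u,v)}_{n}=uF_{n-2}+vF_{n-1}$ and $t^{(u,v)}_{n+1}=uF_{n-1}+vF_n$ and expand. The essential bookkeeping is to gather the resulting products according to the monomials in $u$ and $v$: the coefficient of $F_{n-2}F_{n-1}$ should collapse to $u^2$, that of $F_{n-1}F_n$ to $v^2$, while the coefficients of $F_{n-1}^2$ and $F_{n-2}F_n$ should become $uv-1$ and $uv+1$, respectively. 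These reassemble precisely into $(uF_{n-2}+vF_{n-1})(uF_{n-1}+vF_n)=t^{(u,v)}_{n}t^{(u,v)}_{n+1}$ together with a single leftover $F_{n-2}F_n-F_{n-1}^2$.

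The decisive step is then Cassini's identity in the shifted form $F_{n-2}F_n-F_{n-1}^2=(-1)^{n-1}$, which for even $n$ equals $-1$. This is exactly where the argument diverges from Theorem~\ref{genformodd}, whose corresponding leftover is $+1$. Inserting this value collapses the doubled left-hand side to $t^{(u,v)}_{n}t^{(u,v)}_{n+1}-t^{(u,v)}_{n}-t^{(u,v)}_{n+1}+1=(t^{(u,v)}_{n}-1)(t^{(u,v)}_{n+1}-1)$, proving \eqref{fe1}. For \eqref{fe2}, the internal $\pm1$ adjustments in the two coefficients trade places, which reverses the sign of the leftover term; this is again reconciled by the same even-$n$ value of Cassini's identity, so the computation closes in the same way.

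I expect no conceptual obstacle, as the whole task is a verification; the only place demanding care is the cancellation of the $\pm1$ perturbations inside the $F_{n-1}^2$ and $F_{n-2}F_n$ coefficients, which is precisely what dictates the sign placements attached to $r$ and $u-r$ in the even case. I would emphasise that one cannot simply relabel Theorem~\ref{genformodd} to obtain this result: the parity of $n$ governs the sign of Cassini's identity, so the even case must be expanded afresh rather than deduced by a substitution such as $r\mapsto u-r$.
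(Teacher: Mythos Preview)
Your plan is correct and matches the paper's own proof essentially line for line: double, substitute $t^{(u,v)}_n=uF_{n-2}+vF_{n-1}$ and $t^{(u,v)}_{n+1}=uF_{n-1}+vF_n$, expand to obtain $u^2F_{n-2}F_{n-1}+(uv+1)F_{n-2}F_n+(uv-1)F_{n-1}^2+v^2F_{n-1}F_n$ plus tail terms, and close with $1+F_{n-2}F_n-F_{n-1}^2=0$ from Cassini for even $n$. The treatment of \eqref{fe2} by swapping the $\pm1$ is likewise exactly what the paper does.
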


\begin{proof}
To prove \eqref{fe1}, we see that
\begin{align*}
    &2+\left((u-r)F_{n-1}+\frac{(u-r)v+1}{u}F_{n}-1\right)t^{(u,v)}_{n} + \\
    &\left(rF_{n-2}+\frac{vr-1}{u}F_{n-1}-1\right)t^{(u,v)}_{n+1}\\
    \ =\ &1+\left((u-r)F_{n-1}+\frac{(u-r)v+1}{u}F_{n}\right)t^{(u,v)}_{n}+\\
    &\left(rF_{n-2}+\frac{vr-1}{u}F_{n-1}\right)t^{(u,v)}_{n+1}- t^{(u,v)}_{n}-t^{(u,v)}_{n+1}+1\\
    \ =\ &1+ u(u-r)F_{n-2}F_{n-1}+((u-r)v+1)F_{n-2}F_{n}+v(u-r)F_{n-1}^2 + \\
    &v\frac{(u-r)v+1}{u}F_{n-1}F_{n}+urF_{n-2}F_{n-1}+vrF_{n-2}F_{n} + \\
    &(vr-1)F_{n-1}^2+v\frac{vr-1}{u}F_{n-1}F_{n}- t^{(u,v)}_{n}-t^{(u,v)}_{n+1}+1\\
    \ =\ &1+ u^2F_{n-2}F_{n-1}+(uv+1)F_{n-2}F_{n}+(uv-1)F_{n-1}^2+v^2F_{n-1}F_{n}-\\
    &t^{(u,v)}_{n}-t^{(u,v)}_{n+1}+1\\
    \ =\ &(1 + F_{n-2}F_n - F_{n-1}^2) + u^2F_{n-2}F_{n-1}+uvF_{n-2}F_{n}+uvF_{n-1}^2+v^2F_{n-1}F_{n} - \\\
    &t^{(u,v)}_{n}-t^{(u,v)}_{n+1}+1\\
    \ =\ &(uF_{n-2} + vF_{n-1})(uF_{n-1}+vF_{n}) - t^{(u,v)}_{n}-t^{(u,v)}_{n+1}+1\\
    \ =\ &t^{(u,v)}_{n}t^{(u,v)}_{n+1}- t^{(u,v)}_{n}-t^{(u,v)}_{n+1}+1\\
    \ =\ &(t^{(u,v)}_{n}-1)(t^{(u,v)}_{n+1}-1).
\end{align*}
    Similarly, to obtain \eqref{fe2}, we have
\begin{align*}
     &\left((u-r)F_{n-1}+\frac{(u-r)v-1}{u}F_{n}-1\right)t^{(u,v)}_{n} + \left(rF_{n-2}+\frac{vr+1}{u}F_{n-1}-1\right)t^{(u,v)}_{n+1}\\
     \ =\ &-1+\left((u-r)F_{n-1}+\frac{(u-r)v-1}{u}F_{n}\right)t^{(u,v)}_{n} + \left(rF_{n-2}+\frac{vr+1}{u}F_{n-1}\right)t^{(u,v)}_{n+1} - \\
    &t^{(u,v)}_{n}-t^{(u,v)}_{n+1}+1\\
    \ =\ &-1+ u(u-r)F_{n-2}F_{n-1}+((u-r)v-1)F_{n-2}F_{n}+v(u-r)F_{n-1}^2 + \\
    &v\frac{(u-r)v-1}{u}F_{n-1}F_{n}+urF_{n-2}F_{n-1}+vrF_{n-2}F_{n} + \\
    &(vr+1)F_{n-1}^2+v\frac{vr+1}{u}F_{n-1}F_{n}- t^{(u,v)}_{n}-t^{(u,v)}_{n+1}+1\\
    \ =\ &-1+ u^2F_{n-2}F_{n-1}+(uv-1)F_{n-2}F_{n}+(uv+1)F_{n-1}^2+v^2F_{n-1}F_{n}-\\
    &t^{(u,v)}_{n}-t^{(u,v)}_{n+1}+1\\
    \ =\ &(-1 - F_{n-2}F_n + F_{n-1}^2) + u^2F_{n-2}F_{n-1}+uvF_{n-2}F_{n}+uvF_{n-1}^2+v^2F_{n-1}F_{n} - \\
    &t^{(u,v)}_{n}-t^{(u,v)}_{n+1}+1\\
    \ =\ &(uF_{n-2} + vF_{n-1})(uF_{n-1} + vF_{n}) - t^{(u,v)}_{n}-t^{(u,v)}_{n+1}+1\\
    \ =\ &t^{(u,v)}_{n}t^{(u,v)}_{n+1}- t^{(u,v)}_{n}-t^{(u,v)}_{n+1}+1\\
    \ =\ &(t^{(u,v)}_{n}-1)(t^{(u,v)}_{n+1}-1).
\end{align*}  
\end{proof}

Let
\begin{align*}
&\Phi^{(0)}_{2}(u,v,n,r)\ :=\ \frac{1}{2}\left((u-r)F_{n-1}+\frac{(u-r)v-1}{u}F_{n}-1\right),\\
&\Psi^{(0)}_{2}(u, v, n, r)\ :=\ \frac{1}{2}\left(rF_{n-2}+\frac{vr+1}{u}F_{n-1}-1\right),\\
&\Phi^{(1)}_{2}(u,v,n,r)\ :=\ \frac{1}{2}\left((u-r)F_{n-1}+\frac{(u-r)v+1}{u}F_{n}-1\right),\\
&\Psi^{(1)}_{2}(u, v, n, r)\ :=\ \frac{1}{2}\left(rF_{n-2}+\frac{vr-1}{u}F_{n-1}-1\right).
\end{align*}
The subscript $2$ of $\Phi$ and $\Psi$ indicates that we are considering even $n$. The superscript of $(0)$ or $(1)$ indicates whether ($\Phi$, $\Psi$) is a solution of \eqref{e1} or \eqref{e2}, respectively.

\subsection{When $\boldsymbol{n}$ is even}\label{neven}

\begin{thm}\label{0mod6}
Let $u,v\in \mathbb{N}$ with $\gcd(u,v)=1$ and let $n = 6k+6$ for some nonnegative integer $k$.
Set 
$r = \begin{cases}0, &\mbox{ if }u = 1;\\
\mathbb{E}(u,v), &\mbox{ if }u\mbox{ is odd and }u\ge 3 ;\\ \mathbb{O}(u,v,1), &\mbox{ if }u\mbox{ is even}.\end{cases}$

If $\begin{cases}u\mbox{ is odd}, u\ge 3 \mbox{ and }vr \equiv 1\mod u\mbox{, or }\\ u\mbox{ is even and }vr\equiv 1\mod 2u,\end{cases}$ then
\begin{equation}\label{ef7}
\begin{cases}
x^{(1)}(t^{(u,v)}_{n}, t^{(u,v)}_{n+1})&\ =\ \Phi^{(1)}_2(u,v, n, r),\\
y^{(1)}(t^{(u,v)}_{n}, t^{(u,v)}_{n+1})&\ =\ \Psi^{(1)}_2(u,v,n,r).
\end{cases}
\end{equation}

If $\begin{cases}u\mbox{ is odd and }vr \equiv -1\mod u\mbox{, or}\\ u\mbox{ is even and }vr\equiv -1\mod 2u,\end{cases}$ then
\begin{equation}\label{ef8}
\begin{cases}
x^{(0)}(t^{(u,v)}_{n}, t^{(u,v)}_{n+1})&\ =\ \Phi^{(0)}_2(u,v,n,r),\\
y^{(0)}(t^{(u,v)}_{n}, t^{(u,v)}_{n+1})&\ =\ \Psi^{(0)}_2(u,v,n,r).
\end{cases}
\end{equation}
\end{thm}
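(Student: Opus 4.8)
The plan is to treat the four expressions $\Phi^{(0)}_2,\Psi^{(0)}_2,\Phi^{(1)}_2,\Psi^{(1)}_2$ as already-verified \emph{solutions} and reduce the theorem to a nonnegativity-and-integrality check. Indeed, Theorem \ref{genformeven} is a pair of polynomial identities valid for every $(u,v,n,r)\in\mathbb{Z}^4$ with $n$ even: by \eqref{fe1} the pair $(\Phi^{(1)}_2,\Psi^{(1)}_2)$ satisfies \eqref{e2}, and by \eqref{fe2} the pair $(\Phi^{(0)}_2,\Psi^{(0)}_2)$ satisfies \eqref{e1}, with $a=t^{(u,v)}_n$ and $b=t^{(u,v)}_{n+1}$. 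Since $t^{(u,v)}_{n+1}=t^{(u,v)}_n+t^{(u,v)}_{n-1}$, the Euclidean algorithm gives $\gcd(t^{(u,v)}_n,t^{(u,v)}_{n+1})=\gcd(u,v)=1$, so Theorem \ref{ot} applies: exactly one of \eqref{e1}, \eqref{e2} has a nonnegative integral solution, and it is unique. Thus it suffices to show that, for the prescribed $r$, the relevant pair lies in $\mathbb{Z}_{\ge0}^2$; uniqueness then forces it to be \emph{the} solution, yielding \eqref{ef7} or \eqref{ef8}.

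First I would record the parities dictated by $n=6k+6$: since $3\mid n$ we have $2\mid F_n$, and since $3\nmid n-1$ and $3\nmid n-2$ both $F_{n-1}$ and $F_{n-2}$ are odd; moreover $n$ is even, so Theorem \ref{genformeven} indeed applies. For integrality of the fractions, the only denominators are $u$. In the case $vr\equiv1$, the hypothesis $vr\equiv1\pmod u$ (which follows from $vr\equiv1\pmod{2u}$ when $u$ is even, and is given directly when $u$ is odd) yields $u\mid vr-1$, and writing $(u-r)v+1=uv-(vr-1)$ shows $u\mid(u-r)v+1$; the case $vr\equiv-1$ is symmetric, with $vr+1$ and $(u-r)v-1$. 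Hence every fraction appearing in the relevant pair is an integer.

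The delicate point is the division by $2$, and this is exactly where the parity of $r$ and the modulus $2u$ enter. By Lemma \ref{l1}, when $u$ is odd one takes $r=\mathbb{E}(u,v)$ even; by Lemma \ref{l2}, when $u$ is even one takes $r=\mathbb{O}(u,v,1)$ odd (and then $v$ is odd since $\gcd(u,v)=1$). In either case $u-r$ is odd, so, since $F_{n-1}$ is odd and $F_n$ is even, each $\Phi$-bracket is congruent to $(u-r)-1\equiv0\pmod2$. For the $\Psi$-bracket, which is congruent to $r+\tfrac{vr\mp1}{u}-1\pmod2$ (using that $F_{n-2},F_{n-1}$ are odd), I claim $\tfrac{vr\mp1}{u}$ always has parity opposite to $r$: when $u$ is odd, $u\cdot\tfrac{vr\mp1}{u}=vr\mp1$ is odd (as $vr$ is even), forcing $\tfrac{vr\mp1}{u}$ odd against the even $r$; when $u$ is even, the stronger congruence $vr\equiv\pm1\pmod{2u}$ gives $2u\mid vr\mp1$, so $\tfrac{vr\mp1}{u}$ is even against the odd $r$. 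I expect this evenness in the even-$u$ case to be the main obstacle, and it is precisely the reason the definition of $r$ uses $\pmod{2u}$ rather than $\pmod u$. Consequently $r+\tfrac{vr\mp1}{u}$ is odd and the $\Psi$-bracket is even, so both coordinates are integers.

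Finally, nonnegativity is routine: for $n=6k+6\ge6$ we have $F_{n-1}\ge5$ and $F_{n-2}\ge3$; each bracket has leading term $(u-r)F_{n-1}\ge F_{n-1}$ or $rF_{n-2}\ge F_{n-2}$ (using $u-r\ge1$, and $r\ge1$ except in the degenerate case $u=1$), the remaining fractional terms are nonnegative because $vr\ge1$, and subtracting $1$ still leaves a positive number. Therefore, in the case $vr\equiv1$ the pair $(\Phi^{(1)}_2,\Psi^{(1)}_2)$ is a nonnegative integral solution of \eqref{e2}, giving \eqref{ef7}, and in the case $vr\equiv-1$ the pair $(\Phi^{(0)}_2,\Psi^{(0)}_2)$ is one of \eqref{e1}, giving \eqref{ef8}. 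The degenerate value $u=1$, $r=0$ (where $vr\equiv-1\pmod1$ holds vacuously, so it falls under \eqref{ef8}) I would verify by direct substitution, noting that it recovers the $n\equiv0\pmod6$ line of Theorem \ref{ori} when $v=1$.
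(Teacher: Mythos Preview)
Your proposal is correct and follows essentially the same approach as the paper: invoke Theorem~\ref{genformeven} to reduce to a nonnegativity-and-integrality check, then verify these by a parity analysis of $r$, $u-r$, $(vr\mp1)/u$, and $((u-r)v\pm1)/u$ against the parities of $F_{n-2},F_{n-1},F_n$ for $n\equiv0\pmod6$. The paper presents this same verification as a sequence of tables, while you write it in prose and single out $u=1$ for direct inspection (the paper's tables absorb that case silently), but the substance is identical.
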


\begin{proof}
Thanks to Theorem \ref{genformeven}, we need only to prove that \eqref{ef7} and \eqref{ef8} give nonnegative integers.

Case 1: $\begin{cases}u\mbox{ is odd}, u\ge 3,\mbox{ and }vr \equiv 1\mod u\mbox{, or}\\ u\mbox{ is even and }vr\equiv 1\mod 2u.\end{cases}$

\begin{table}[H]
\centering
\begin{tabular}{ | c| c| c| c| c| c|}
\hline
$r$& $u-r$ & $\frac{(u-r)v+1}{u}$ & $\frac{vr-1}{u}$ & $\Phi^{(1)}_2\ge 0?$ & $\Psi^{(1)}_2\ge 0?$ \\
\hline
$\ge 1$ & $\ge 0$ & $\ge 1$ & $\ge 0$ & \checkmark & \checkmark   \\
\hline
\end{tabular}
\caption{Case 1's nonnegative solutions for $n = 6k+6$.}
\end{table}

    \begin{table}[H]
\centering
\begin{tabular}{ |c| c| c| c| c| c| c|}
\hline
$n = 6k+6, k\ge 0, u\ge 2$ &$r$& $u-r$ & $\frac{(u-r)v+1}{u}$ & $\frac{vr-1}{u}$ & $\Phi^{(1)}_2\in\mathbb{Z}?$ & $\Psi^{(1)}_2\in \mathbb{Z}?$ \\
\hline
$2\nmid u$, $u|(vr-1)$& even & odd & n/a & odd & \checkmark & \checkmark   \\
\hline
$2|u$, $(2u)|(vr-1)$& odd & odd & odd & even & \checkmark & \checkmark\\
\hline
\end{tabular}
\caption{Case 1's integral solutions for $n = 6k+6$.}
\end{table}

Case 2: $\begin{cases}u\mbox{ is odd and } vr \equiv -1\mod u, \mbox{ or}\\ u\mbox{ is even and }vr\equiv -1\mod 2u.\end{cases}$ Observe that $r < u$ because $r = u$ implies that $u = 1$, in which case $r = 0 < u$, a contradiction. 

\begin{table}[H]
\centering
\begin{tabular}{ | c| c| c| c| c| c|}
\hline
$r$& $u-r$ & $\frac{(u-r)v-1}{u}$ & $\frac{vr+1}{u}$ & $\Phi^{(0)}_2\ge 0?$ & $\Psi^{(0)}_2\ge 0?$ \\
\hline
$\ge 0$ & $\ge 1$ & $\ge 0$ & $\ge 1$ & \checkmark & \checkmark   \\
\hline
\end{tabular}
\caption{Case 2's  nonnegative solutions for $n = 6k+6$.}
\end{table}

    \begin{table}[H]
\centering
\begin{tabular}{ |c| c| c| c| c| c| c|}
\hline
$n = 6k+6, k\ge 0$ &$r$& $u-r$ & $\frac{(u-r)v-1}{u}$ & $\frac{vr+1}{u}$ & $\Phi^{(0)}_2\in\mathbb{Z}?$ & $\Psi^{(0)}_2\in \mathbb{Z}?$ \\
\hline
$2\nmid u$, $u|(vr+1)$& even & odd & n/a & odd & \checkmark & \checkmark   \\
\hline
$2|u$, $(2u)|(vr+1)$& odd & odd & odd & even & \checkmark & \checkmark\\
\hline
\end{tabular}
\caption{Case 2's integral solutions for $n = 6k+6$.}
\end{table}
\end{proof}

\begin{thm}\label{6cases2mod6}
Let $u,v\in \mathbb{N}$ with $\gcd(u,v)=1$ and let $n = 6k+2$ for some nonnegative integer $k$.
Set  $r = \begin{cases}0, &\mbox{ if }u = 1\mbox{ and }v\mbox{ is odd};\\ 1, &\mbox{ if }u = 1\mbox{ and }v\mbox{ is even};\\ 
\mathbb{E}(u,v), &\mbox{ if }u\mbox{ is odd}, u\ge 3, \mbox{ and }v\mbox{ is odd};\\
\mathbb{O}(u,v), &\mbox{ if }u\mbox{ is odd}, u\ge 3, \mbox{ and }v\mbox{ is even};\\
\mathbb{O}(u,v,u+1), &\mbox{ if }u\mbox{ is even}.\end{cases}$

If $\begin{cases}u = 1\mbox{ and }v\mbox{ is even, or}\\u\mbox{ is odd}, u\ge 3,\mbox{ and }vr \equiv 1\mod u\mbox{, or }\\ u\mbox{ is even and }vr\equiv (u+1)\mod 2u,\end{cases}$ then
\begin{equation}\label{ef5}
\begin{cases}
x^{(1)}(t^{(u,v)}_{n}, t^{(u,v)}_{n+1})&\ =\ \Phi^{(1)}_2(u, v, n, r),\\
y^{(1)}(t^{(u,v)}_{n}, t^{(u,v)}_{n+1})&\ =\ \Psi^{(1)}_2(u, v, n, r).
\end{cases}
\end{equation}

If $\begin{cases} u = 1\mbox{ and }v\mbox{ is odd};\\
u\mbox{ is odd}, u\ge 3, \mbox{  and }vr \equiv -1\mod u\mbox{, or }\\ u\mbox{ is even and }vr\equiv -(u+1)\mod 2u,\end{cases}$ then
\begin{equation}
\begin{cases}\label{ef6}
x^{(0)}(t^{(u,v)}_{n}, t^{(u,v)}_{n+1})&\ =\ \Phi^{(0)}_2(u, v, n, r),\\
y^{(0}(t^{(u,v)}_{n}, t^{(u,v)}_{n+1})&\ =\ \Psi^{(0)}_2(u, v, n, r).
\end{cases}
\end{equation}
\end{thm}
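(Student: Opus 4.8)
The plan is to reduce the statement to two verifications—integrality and nonnegativity of the proposed formulas—because the algebraic content is already supplied by Theorem~\ref{genformeven}. Indeed, for even $n$ the identity \eqref{fe1} reads $1 + \Phi^{(1)}_2(u,v,n,r)\,t^{(u,v)}_n + \Psi^{(1)}_2(u,v,n,r)\,t^{(u,v)}_{n+1} = \tfrac{(t^{(u,v)}_n-1)(t^{(u,v)}_{n+1}-1)}{2}$, which is precisely equation \eqref{e2} evaluated at $(x,y)=(\Phi^{(1)}_2,\Psi^{(1)}_2)$; likewise \eqref{fe2} says $(\Phi^{(0)}_2,\Psi^{(0)}_2)$ satisfies \eqref{e1}. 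Hence, once I show that the relevant pair consists of nonnegative integers, Theorem~\ref{ot} (existence and uniqueness) guarantees it is \emph{the} unique solution, which is exactly what \eqref{ef5} and \eqref{ef6} assert. This is the same scheme used to prove Theorem~\ref{0mod6}, so the work lies entirely in the arithmetic bookkeeping for $n = 6k+2$.

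First I would record the two facts that drive everything. Since $F_m$ is even exactly when $3\mid m$, for $n = 6k+2$ we have $F_{n-2}=F_{6k}$ even while $F_{n-1}=F_{6k+1}$ and $F_n=F_{6k+2}$ are odd; this parity pattern differs from the $n=6k+6$ case and is the reason the choice of $r$ must now depend on the parity of $v$. Second, the residue hypotheses are exactly what make the fractional coefficients integral: when $u$ is odd, $vr\equiv 1 \pmod u$ gives $\tfrac{vr-1}{u},\tfrac{(u-r)v+1}{u}\in\mathbb{Z}$, and when $u$ is even, $vr\equiv u+1 \pmod{2u}$ gives $vr-1 \equiv u\pmod{2u}$, so $\tfrac{vr-1}{u}$ is an odd integer and $\tfrac{(u-r)v+1}{u}=v-\tfrac{vr-1}{u}\in\mathbb{Z}$. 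The existence and uniqueness of an $r$ with the prescribed parity in the prescribed residue class are furnished by Lemmas~\ref{l1} and~\ref{l2}.

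The heart of the argument is the parity computation that forces the case split on $v$. Using $F_{n-2}$ even and $F_{n-1},F_n$ odd, the integrality of $\Psi^{(1)}_2$ reduces to $\tfrac{vr-1}{u}$ being odd, and that of $\Phi^{(1)}_2$ reduces to $(u-r)+\tfrac{(u-r)v+1}{u}$ being odd; tracing parities through these two constraints shows that for $u$ odd one needs $r$ even when $v$ is odd (hence $r=\mathbb{E}(u,v)$) and $r$ odd when $v$ is even (hence $r=\mathbb{O}(u,v)$), exactly as prescribed, while for $u$ even (which forces $v$ odd) the odd value $r=\mathbb{O}(u,v,u+1)$ does the job. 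The branch \eqref{ef6} is handled identically with $\pm1$ interchanged. I would display these checks compactly in parity tables, as in the proof of Theorem~\ref{0mod6}.

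Nonnegativity is then routine but requires care at the boundary. From $r\le u$ one gets $u-r\ge 0$, whence $(u-r)v+1\ge 1$ forces $\tfrac{(u-r)v+1}{u}\ge 1$, and with $F_n\ge 1$ this yields $\Phi^{(1)}_2\ge 0$; similarly $\Psi^{(1)}_2\ge 0$ once $\tfrac{vr-1}{u}\ge 1$, so I would rule out the degenerate possibility $vr=1$ (which makes $\tfrac{vr-1}{u}=0$) by checking it is incompatible with each branch of the definition of $r$. This matters precisely at $k=0$, where $F_{n-2}=F_0=0$ and so the term $rF_{n-2}$ cannot rescue the inequality. The main obstacle I anticipate is not any single estimate but the organization: keeping the many sub-cases (parities of $u$ and $v$, the degenerate values $u=1$, and $k=0$) consistent, and confirming that the prescribed $r$ lands in the correct residue class and parity in every branch so that the tables are genuinely exhaustive.
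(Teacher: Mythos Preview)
Your proposal is correct and follows essentially the same approach as the paper: reduce to Theorem~\ref{genformeven}, then verify in each case that the proposed $(\Phi,\Psi)$ are nonnegative integers, with the integrality check carried out via parity tables and the nonnegativity check requiring the observation that $(vr-1)/u\ge 1$ (the paper phrases this as ``for odd $u$, $vr\ge 2$, and for even $u$, $(vr-1)/u$ is nonnegative and odd''). Your explicit explanation of \emph{why} the parity of $v$ forces the split between $\mathbb{E}(u,v)$ and $\mathbb{O}(u,v)$ is a nice addition that the paper leaves implicit in its tables.
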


\begin{proof}
    Thanks to Theorem \ref{genformeven}, we need only to verify that \eqref{ef5} and \eqref{ef6} give nonnegative integral solutions.

    Case 1: $u = 1$ and $v$ is even. Then $r = 1$, so
    \begin{align*}
    &\Phi^{(1)}_2(1, v, n, 1)\ =\ \frac{1}{2}(F_n - 1)\ \ge\ 0,\mbox{ and }\\
    &\Psi^{(1)}_2(1, v, n, 1)\ =\ \frac{1}{2}(F_{n-2} + (v-1)F_{n-1}-1)\ \ge\ 0.
    \end{align*}
    Both $(F_n-1)/2$ and $(F_{n-2}+(v-1)F_{n-1}-1)/2$ are integers because 
    $F_n$ and $F_{n-1}$ are odd, and $F_{n-2}$ is even.

    Case 2: $u = 1$ and $v$ is odd. Then $r = 0$, so
    \begin{align*}
    &\Phi^{(0)}_2(1, v, n, 0)\ =\ \frac{1}{2}(F_{n-1} + (v-1)F_n- 1)\ \ge\ 0,\mbox{ and }\\
    &\Psi^{(0)}_2(1, v, n, 0)\ =\ \frac{1}{2}(F_{n-1}-1)\ \ge\ 0.
    \end{align*}
    Both $(F_{n-1}+(v-1)F_n - 1)/2$ and $(F_{n-1}-1)/2$ are integers because $F_{n-1}$, $F_n$, and $v$ are odd.
    
    Case 3: $\begin{cases}u\mbox{ is odd}, u\ge 3,\mbox{ and }vr \equiv 1\mod u\mbox{, or }\\ u\mbox{ is even and }vr\equiv (u+1)\mod 2u.\end{cases}$ It follows from our choice of $r$ that for odd $u$, $vr\ge 2$, and for even $u$, $(vr-1)/u$ is nonnegative and odd. Hence, $(vr-1)/u\ge 1$. 

\begin{table}[H]
\centering
\begin{tabular}{ | c| c| c| c| c| c|}
\hline
$r$& $u-r$ & $\frac{(u-r)v+1}{u}$ & $\frac{vr-1}{u}$ & $\Phi^{(1)}_2\ge 0?$ & $\Psi^{(1)}_2\ge 0?$ \\
\hline
 $\ge 1$ & $\ge 0$ & $\ge 1$ & $\ge 1$ & \checkmark & \checkmark   \\
\hline
\end{tabular}
\caption{Case 3's nonnegative solutions for $n = 6k+2$.}
\end{table}

    \begin{table}[H]
\centering
\begin{tabular}{ |c| c| c| c| c| c| c|}
\hline
$n = 6k+2, k\ge 0, u\ge 2$ &$r$& $u-r$ & $\frac{(u-r)v+1}{u}$ & $\frac{vr-1}{u}$ & $\Phi^{(1)}_2\in\mathbb{Z}?$ & $\Psi^{(1)}_2\in \mathbb{Z}?$ \\
\hline
$2\nmid u$, $u|(vr-1)$, $2|v$ & odd & even & odd & odd & \checkmark & \checkmark   \\
\hline
$2\nmid u$,  $u|(vr-1)$, $2\nmid v$ & even & odd & even & odd & \checkmark & \checkmark   \\
\hline
$2|u$, $(2u)|(vr-u-1)$& odd & odd & even & odd & \checkmark & \checkmark\\
\hline
\end{tabular}
\caption{Case 3's integral solutions for $n = 6k+2$.}
\end{table}

    Case 4: $\begin{cases}u\mbox{ is odd}, u\ge 3, \mbox{ and } vr \equiv -1\mod u\mbox{, or }\\ u\mbox{ is even and }vr\equiv -(u+1)\mod 2u.\end{cases}$ 
    Observe that $u > r$ because $u = r$ implies that $u = 1$.

    \begin{table}[H]
\centering
\begin{tabular}{  |c| c| c| c| c| c|}
\hline
$r$& $u-r$ & $\frac{(u-r)v-1}{u}$ & $\frac{vr+1}{u}$ & $\Phi^{(0)}_2\ge 0?$ & $\Psi^{(0)}_2\ge 0?$ \\
\hline
$\ge 1$ & $\ge 1$ & $\ge 0$ & $\ge 1$ & \checkmark & \checkmark   \\
\hline
\end{tabular}
\caption{Case 4's nonnegative solutions for $n = 6k+2$.}
\end{table}

    \begin{table}[H]
\centering
\begin{tabular}{ |c| c| c| c| c| c| c|}
\hline
$n = 6k+2, k\ge 0, u\ge 2$ &$r$& $u-r$ & $\frac{(u-r)v-1}{u}$ & $\frac{vr+1}{u}$ & $\Phi^{(0)}_2\in\mathbb{Z}?$ & $\Psi^{(0)}_2\in \mathbb{Z}?$ \\
\hline
$2\nmid u$, $u|(vr+1)$, $2|v$ & odd & even & odd & odd & \checkmark & \checkmark   \\
\hline
$2\nmid u$, $u|(vr+1)$, $2\nmid v$ & even & odd & even & odd & \checkmark & \checkmark   \\
\hline
$2|u$, $(2u)|(vr+u+1)$& odd & odd & even & odd & \checkmark & \checkmark\\
\hline
\end{tabular}
\caption{Case 4's integral solutions for $n = 6k+2$.}
\end{table}
\end{proof}

\begin{thm}\label{4mod6}
Let $u,v\in \mathbb{N}$ with $\gcd(u,v)=1$ and let $n = 6k+4$ for some nonnegative integer $k$.
Set 
$r = \begin{cases}1, &\mbox{ if }u = 1;\\
\mathbb{O}(u,v), &\mbox{ if }u\mbox{ is odd and }u\ge 3 ;\\ \mathbb{O}(u,v,1), &\mbox{ if }u\mbox{ is even}.\end{cases}$

If $\begin{cases}u\mbox{ is odd and }vr \equiv 1\mod u\mbox{, or }\\ u\mbox{ is even and }vr\equiv 1\mod 2u,\end{cases}$ then
\begin{equation}\label{ef9}
\begin{cases}
x^{(1)}(t^{(u,v)}_{n}, t^{(u,v)}_{n+1})&\ =\ \Phi^{(1)}_2(u,v, n, r),\\
y^{(1)}(t^{(u,v)}_{n}, t^{(u,v)}_{n+1})&\ =\ \Psi^{(1)}_2(u,v,n,r).
\end{cases}
\end{equation}

If $\begin{cases}u\mbox{ is odd}, u\ge 3, \mbox{ and }vr \equiv -1\mod u\mbox{, or}\\ u\mbox{ is even and }vr\equiv -1\mod 2u,\end{cases}$ then
\begin{equation}\label{ef10}
\begin{cases}
x^{(0)}(t^{(u,v)}_{n}, t^{(u,v)}_{n+1})&\ =\ \Phi^{(0)}_2(u,v,n,r),\\
y^{(0)}(t^{(u,v)}_{n}, t^{(u,v)}_{n+1})&\ =\ \Psi^{(0)}_2(u,v,n,r).
\end{cases}
\end{equation}
\end{thm}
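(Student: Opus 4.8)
The plan is to imitate the proofs of Theorems \ref{0mod6} and \ref{6cases2mod6}, since $n = 6k+4$ is again even. The algebraic heart is already in place: because $n$ is even, Theorem \ref{genformeven} shows that $\bigl(\Phi^{(1)}_2(u,v,n,r),\Psi^{(1)}_2(u,v,n,r)\bigr)$ satisfies \eqref{e2} and $\bigl(\Phi^{(0)}_2(u,v,n,r),\Psi^{(0)}_2(u,v,n,r)\bigr)$ satisfies \eqref{e1} for every integer $r$. Hence all that remains is to check that, for the prescribed $r$, the relevant pair consists of \emph{nonnegative integers}; Theorem \ref{ot} then guarantees that such a pair is the unique solution and simultaneously tells us which equation is solved: we land in \eqref{ef9} when $vr\equiv1$ and in \eqref{ef10} when $vr\equiv-1$ (the congruence taken modulo $u$ for odd $u$ and modulo $2u$ for even $u$).

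First I would record the parities peculiar to $n = 6k+4$. Since $F_m$ is even exactly when $3\mid m$, and among $n-2,n-1,n$ only $n-1=6k+3$ is divisible by $3$, we get $F_{n-2}$ odd, $F_{n-1}$ even, and $F_n$ odd. This is precisely what dictates the choice of $r$. Indeed, in $\Psi^{(1)}_2 = \tfrac12\bigl(rF_{n-2}+\tfrac{vr-1}{u}F_{n-1}-1\bigr)$ the middle summand is automatically even, so the bracket is $\equiv r-1\pmod2$, forcing $r$ odd; and in $\Phi^{(1)}_2$ the bracket is $\equiv\tfrac{(u-r)v+1}{u}-1\pmod2$, which for odd $u$ is even because an odd $r$ makes $(u-r)v+1$ odd and hence, over the odd denominator $u$, makes the fraction odd. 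Thus the normalization $r=1$ (for $u=1$), $r=\mathbb{O}(u,v)$ (for odd $u\ge3$), and $r=\mathbb{O}(u,v,1)$ (for even $u$, furnished by Lemma \ref{l2}) is exactly the odd choice that renders every entry integral.

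Next I would split on the parity of $u$, disposing of $u=1$ by direct substitution (here no further split on the parity of $v$ is needed, precisely because $F_{n-1}$ is even), and for $u\ge2$ arrange the verification into the same two short tables used before: one recording the signs of $r$, $u-r$, $\tfrac{(u-r)v\pm1}{u}$, and $\tfrac{vr\mp1}{u}$ (each a nonnegative integer, with the appropriate ones bounded below by $1$), from which $\Phi\ge0$ and $\Psi\ge0$ follow using $F_{n-2},F_{n-1},F_n\ge1$; and one recording the parities that certify integrality. The divisibilities $u\mid(u-r)v\pm1$ and $u\mid vr\mp1$ needed for the fractions follow from $vr\equiv\pm1\pmod u$, guaranteed by the defining property of $\mathbb{O}(u,v)$ and $\mathbb{O}(u,v,1)$ via Lemmas \ref{l1} and \ref{l2}. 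For even $u$ one also notes $r\ne u$ (immediate, as $r$ is odd and $u$ even), so that $u-r\ge1$.

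The main obstacle is the even-$u$ branch, where the governing congruence lives modulo $2u$ rather than modulo $u$. There I must convert $vr\equiv\pm1\pmod{2u}$ into the exact parities of $\tfrac{(u-r)v\pm1}{u}$ and $\tfrac{vr\mp1}{u}$; the congruence $(u-r)v+1\equiv uv\pmod{2u}$ (and its sign variants), together with $v$ odd (forced by $\gcd(u,v)=1$ and $u$ even), is what pins them down. Keeping this modular bookkeeping consistent, and threading the sign of the residue correctly through to the correct alternative \eqref{ef9} versus \eqref{ef10}, is the only delicate point; everything else is a routine transcription of the two preceding proofs.
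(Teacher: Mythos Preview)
Your proposal is correct and follows essentially the same approach as the paper: invoke Theorem \ref{genformeven} to reduce to showing that the prescribed pairs are nonnegative integers, then carry out a case-by-case parity and sign check organized into tables. Your added commentary on why $r$ must be odd (and the cleaner observation that for even $u$ one has $r\ne u$ simply because $r$ is odd) is more explanatory than what the paper records, but the verification is the same.
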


\begin{proof}
Thanks to Theorem \ref{genformeven}, we need only to prove that \eqref{ef9} and \eqref{ef10} give nonnegative integers.

Case 1: $\begin{cases}u\mbox{ is odd and }vr \equiv 1\mod u\mbox{, or}\\ u\mbox{ is even and }vr\equiv 1\mod 2u.\end{cases}$ 

\begin{table}[H]
\centering
\begin{tabular}{ | c| c| c| c| c| c|}
\hline
$r$& $u-r$ & $\frac{(u-r)v+1}{u}$ & $\frac{vr-1}{u}$ & $\Phi^{(1)}_2\ge 0?$ & $\Psi^{(1)}_2\ge 0?$ \\
\hline
$\ge 1$ & $\ge 0$ & $\ge 1$ & $\ge 0$ & \checkmark & \checkmark   \\
\hline
\end{tabular}
\caption{Case 1's nonnegative solutions for $n = 6k+4$.}
\end{table}

    \begin{table}[H]
\centering
\begin{tabular}{ |c| c| c| c| c| c| c|}
\hline
$n = 6k+4, k\ge 0$ &$r$& $u-r$ & $\frac{(u-r)v+1}{u}$ & $\frac{vr-1}{u}$ & $\Phi^{(1)}_2\in\mathbb{Z}?$ & $\Psi^{(1)}_2\in \mathbb{Z}?$ \\
\hline
$2\nmid u$, $u|(vr-1)$& odd & even & odd & n/a & \checkmark & \checkmark   \\
\hline
$2|u$, $(2u)|(vr-1)$& odd & odd & odd & even & \checkmark & \checkmark\\
\hline
\end{tabular}
\caption{Case 1's integral solutions for $n = 6k+4$.}
\end{table}

Case 2: $\begin{cases}u\mbox{ is odd}, u\ge 3,\mbox{ and } vr \equiv -1\mod u, \mbox{ or}\\ u\mbox{ is even and }vr\equiv -1\mod 2u.\end{cases}$ Observe that $r < u$ because $r = u$ implies that $u = 1$.

\begin{table}[H]
\centering
\begin{tabular}{ | c| c| c| c| c| c|}
\hline
$r$& $u-r$ & $\frac{(u-r)v-1}{u}$ & $\frac{vr+1}{u}$ & $\Phi^{(0)}_2\ge 0?$ & $\Psi^{(0)}_2\ge 0?$ \\
\hline
$\ge 1$ & $\ge 1$ & $\ge 0$ & $\ge 1$ & \checkmark & \checkmark   \\
\hline
\end{tabular}
\caption{Case 2's nonnegative solutions for $n = 6k+4$.}
\end{table}

    \begin{table}[H]
\centering
\begin{tabular}{ |c| c| c| c| c| c| c|}
\hline
$n = 6k+4, k\ge 0, u\ge 2$ &$r$& $u-r$ & $\frac{(u-r)v-1}{u}$ & $\frac{vr+1}{u}$ & $\Phi^{(0)}_2\in\mathbb{Z}?$ & $\Psi^{(0)}_2\in \mathbb{Z}?$ \\
\hline
$2\nmid u$, $u|(vr+1)$& odd & even & odd & n/a & \checkmark & \checkmark   \\
\hline
$2|u$, $(2u)|(vr+1)$& odd & odd & odd & even & \checkmark & \checkmark\\
\hline
\end{tabular}
\caption{Case 2's integral solutions for $n = 6k+4$.}
\end{table}
\end{proof}

\subsection{When $\boldsymbol{n}$ is odd}\label{nodd}

\begin{thm}\label{1mod6}
Let $u,v\in \mathbb{N}$ with $\gcd(u,v)=1$ and let $n = 6k+1$ for some nonnegative integer $k$.
Set 
$r = \begin{cases}0, &\mbox{ if }u = 1;\\
\mathbb{E}(u,v), &\mbox{ if }u\mbox{ is odd and }u\ge 3 ;\\ \mathbb{O}(u,v,u+1), &\mbox{ if }u\mbox{ is even}.\end{cases}$

If $v = n = 1$, then $x^{(0)}(t^{(u,1)}_1, t^{(u,1)}_2) = y^{(0)}(t^{(u,1)}_1, t^{(u, 1)}_2) = 0$.

If $(v,n)\neq (1,1)$ and $\begin{cases}u\mbox{ is odd}, u\ge 3,\mbox{ and }vr \equiv 1\mod u\mbox{, or }\\ u\mbox{ is even and }vr\equiv u+1\mod 2u,\end{cases}$ then
\begin{equation}\label{ef11}
\begin{cases}
x^{(1)}(t^{(u,v)}_{n}, t^{(u,v)}_{n+1})&\ =\ \Phi^{(1)}_1(u,v, n, r),\\
y^{(1)}(t^{(u,v)}_{n}, t^{(u,v)}_{n+1})&\ =\ \Psi^{(1)}_1(u,v,n,r).
\end{cases}
\end{equation}

If $ (v,n)\neq (1,1)$ and $\begin{cases}u\mbox{ is odd and }vr \equiv -1\mod u\mbox{, or}\\ u\mbox{ is even and }vr\equiv -u-1\mod 2u,\end{cases}$ then
\begin{equation}\label{ef12}
\begin{cases}
x^{(0)}(t^{(u,v)}_{n}, t^{(u,v)}_{n+1})&\ =\ \Phi^{(0)}_1(u,v,n,r),\\
y^{(0)}(t^{(u,v)}_{n}, t^{(u,v)}_{n+1})&\ =\ \Psi^{(0)}_1(u,v,n,r).
\end{cases}
\end{equation}
\end{thm}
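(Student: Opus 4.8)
The plan is to mirror the three even-$n$ arguments (Theorems \ref{0mod6}, \ref{6cases2mod6}, \ref{4mod6}), replacing the even-$n$ templates $\Phi_2,\Psi_2$ by the odd-$n$ templates $\Phi_1,\Psi_1$. The key observation is that all the \emph{algebraic} work is already done: since $n=6k+1$ is odd, Theorem \ref{genformodd} guarantees for every integer $r$ that $(\Phi^{(1)}_1(u,v,n,r),\Psi^{(1)}_1(u,v,n,r))$ satisfies \eqref{fe3} (the shape of \eqref{e2}) and $(\Phi^{(0)}_1(u,v,n,r),\Psi^{(0)}_1(u,v,n,r))$ satisfies \eqref{fe4} (the shape of \eqref{e1}). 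Because $\gcd(t^{(u,v)}_n,t^{(u,v)}_{n+1})=\gcd(u,v)=1$, Theorem \ref{ot} applies, so exactly one equation admits a nonnegative integral solution and it is unique. Hence it suffices to show that, under the stated hypotheses on $r$, the claimed pair consists of \emph{nonnegative integers}; uniqueness then forces it to be the solution, and identifies which of \eqref{e1}, \eqref{e2} is used.

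First I would dispose of the degenerate case $v=n=1$: here $t^{(u,1)}_2=1$, so $(t^{(u,1)}_1-1)(t^{(u,1)}_2-1)/2=0$, and the only nonnegative solution of $t^{(u,1)}_1x+y=0$ is $(x,y)=(0,0)$, solving \eqref{e1}. For the remaining cases I would record the parities at $n=6k+1$: since $n-1=6k\equiv 0$, $n-2=6k-1\equiv 2$, and $n\equiv 1\pmod 3$, the number $F_{n-1}$ is even while $F_{n-2}$ and $F_n$ are odd. (At $k=0$ this reads $F_0=0$ and $F_{-1}=F_1=1$, which is consistent.)

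I would then split into the two branches exactly as stated and build sign/integrality tables. For nonnegativity I would use the defining range of $r$ (with $r=0$ occurring only in the $u=1$ subcase, which falls under the $vr\equiv -1\pmod u$ branch since the $u$-odd-and-$+1$ branch requires $u\ge 3$) together with $\gcd(u,v)=1$ to argue that each of $r$, $u-r$, $(vr\pm1)/u$, and $((u-r)v\pm1)/u$ is a nonnegative integer of the required size. The congruence $vr\equiv\pm1\pmod u$ (respectively $vr\equiv\pm(u+1)\pmod{2u}$ for even $u$) is precisely what makes the quantities $(vr\mp1)/u$ integral, and it also pins down that they are at least $1$. For integrality of the halved expressions I would combine the Fibonacci parities above with the parities of $r$, $u-r$, and $(vr\pm1)/u$ forced by the choice $r\in\{\mathbb{E}(u,v),\mathbb{O}(u,v,u+1)\}$, checking in each parity sub-case that the bracketed quantity is even.

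The main obstacle will be the simultaneous bookkeeping of three parities — the parity of $u$, the parity of $v$ (which governs whether the even representative $\mathbb{E}(u,v)$ or the forced odd representative is used), and the fixed parities of $F_{n-2},F_{n-1},F_n$ — so that nonnegativity and integrality hold together in every sub-case. The genuinely delicate point is the boundary value $k=0$ (i.e.\ $n=1$), where $F_{n-1}=0$ collapses the leading term of $\Phi_1$: nonnegativity there no longer follows from a bound of the form $rF_{n-1}\ge 1$, but instead rests on the sharp lower bound $vr\ge u-1$ (resp.\ $vr\ge u+1$) coming from the residue class of $r$. This is exactly why $(v,n)=(1,1)$ must be excised and treated by hand, and I would verify that for $(v,n)\neq(1,1)$ the residue $vr$ never degenerates to a value making $\Phi_1<0$.
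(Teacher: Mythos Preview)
Your plan is correct and matches the paper's own proof essentially step for step: reduce via Theorem~\ref{genformodd}, dispose of $(v,n)=(1,1)$ by hand, then verify nonnegativity and integrality of $\Phi^{(i)}_1,\Psi^{(i)}_1$ in each parity sub-case using the Fibonacci parities $F_{n-1}$ even, $F_{n-2},F_n$ odd together with the parity of $r$ forced by the choice $r\in\{0,\mathbb{E}(u,v),\mathbb{O}(u,v,u+1)\}$. Your identification of the $n=1$ boundary as the only delicate nonnegativity check, resting on $vr\ge u+1$ (in the $+1$ branch) and $vr\ge u-1$ (in the $-1$ branch) from the residue class of $vr$, is exactly the content of the paper's remark that ``if $n=1$, then $v\ge 2$; if $n>1$, then $n\ge 7$'' combined with $r\ge 1$ --- indeed your formulation is sharper, since the congruence alone already forces $vr\ge u+1$ once $vr\ge 2$.
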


\begin{proof}
Thanks to Theorem \ref{genformodd}, we need only to prove that \eqref{ef11} and \eqref{ef12} give nonnegative integers.

Case 1: $v = n = 1$. We have $t^{(u, 1)}_{1} = u$ and $t^{(u,1)}_2 = 1$, so
$$t^{(u, 1)}_{1}\cdot 0 + t^{(u,1)}_2\cdot 0\ =\ \frac{(t^{(u, 1)}_{1}-1)(t^{(u, 1)}_{2}-1)}{2}.$$

Case 2: $(v, n)\neq (1,1)$ and $\begin{cases}u\mbox{ is odd}, u\ge 3, \mbox{ and }vr \equiv 1\mod u\mbox{, or}\\ u\mbox{ is even and }vr\equiv u+1\mod 2u.\end{cases}$ 
Observe that $u = r$ implies that $u = 1$, which does not belong to the case we are considering. Hence, $u-r\ge 1$ and thus, $\Psi^{(1)}_1(u,v,n,r)\ge 0$. 

Furthermore, if $n = 1$, then $v\ge 2$; if $n > 1$, then $n\ge 7$. These combined with $r\ge 1$ give $\Phi^{(1)}_1(u, v, n, r)\ge 0$.

    \begin{table}[H]
\centering
\begin{tabular}{ |c| c| c| c| c| c| c|}
\hline
$n = 6k+1, k\ge 0, u\ge 2$ &$r$& $u-r$ & $\frac{(u-r)v+1}{u}$ & $\frac{vr-1}{u}$ & $\Phi^{(1)}_1\in\mathbb{Z}?$ & $\Psi^{(1)}_1\in \mathbb{Z}?$ \\
\hline
$2\nmid u$, $u|(vr-1)$& even & odd & n/a & odd & \checkmark & \checkmark   \\
\hline
$2|u$, $(2u)|(vr-u-1)$& odd & odd & even & odd & \checkmark & \checkmark\\
\hline
\end{tabular}
\caption{Case 1's integral solutions for $n = 6k+1$.}
\end{table}

Case 3: $(v, n)\neq (1,1)$ and $\begin{cases}u\mbox{ is odd and } vr \equiv -1\mod u, \mbox{ or}\\ u\mbox{ is even and }vr\equiv -(u+1)\mod 2u.\end{cases}$ Observe that $r < u$ because $r = u$ implies that $r = u = 1$, contradicting our choice of $r$.

\begin{table}[H]
\centering
\begin{tabular}{ | c| c| c| c| c| c|}
\hline
$r$& $u-r$ & $\frac{(u-r)v-1}{u}$ & $\frac{vr+1}{u}$ & $\Phi^{(0)}_1\ge 0?$ & $\Psi^{(0)}_1\ge 0?$ \\
\hline
$\ge 0$ & $\ge 1$ & $\ge 0$ & $\ge 1$ & \checkmark & \checkmark   \\
\hline
\end{tabular}
\caption{Case 3's nonnegative solutions for $n = 6k+1$.}
\end{table}

    \begin{table}[H]
\centering
\begin{tabular}{ |c| c| c| c| c| c| c|}
\hline
$n = 6k+1, k\ge 0$ &$r$& $u-r$ & $\frac{(u-r)v-1}{u}$ & $\frac{vr+1}{u}$ & $\Phi^{(0)}_1\in\mathbb{Z}?$ & $\Psi^{(0)}_1\in \mathbb{Z}?$ \\
\hline
$2\nmid u$, $u|(vr+1)$& even & odd & n/a & odd & \checkmark & \checkmark   \\
\hline
$2|u$, $(2u)|(vr+u+1)$& odd & odd & even & odd & \checkmark & \checkmark\\
\hline
\end{tabular}
\caption{Case 3's integral solutions for $n = 6k+1$.}
\end{table}
\end{proof}

\begin{thm}\label{3mod6}
Let $u,v\in \mathbb{N}$ with $\gcd(u,v)=1$ and let $n = 6k+3$ for some nonnegative integer $k$.
Set 
$r = \begin{cases}1, &\mbox{ if }u = 1;\\
\mathbb{O}(u,v), &\mbox{ if }u\mbox{ is odd and }u\ge 3 ;\\ \mathbb{O}(u,v,u+1), &\mbox{ if }u\mbox{ is even}.\end{cases}$

If $\begin{cases}u\mbox{ is odd and }vr \equiv 1\mod u\mbox{, or }\\ u\mbox{ is even and }vr\equiv u+1\mod 2u,\end{cases}$ then
\begin{equation}\label{ef13}
\begin{cases}
x^{(1)}(t^{(u,v)}_{n}, t^{(u,v)}_{n+1})&\ =\ \Phi^{(1)}_1(u,v, n, r),\\
y^{(1)}(t^{(u,v)}_{n}, t^{(u,v)}_{n+1})&\ =\ \Psi^{(1)}_1(u,v,n,r).
\end{cases}
\end{equation}

If $\begin{cases}u\mbox{ is odd}, u\ge 3,\mbox{ and }vr \equiv -1\mod u\mbox{, or}\\ u\mbox{ is even and }vr\equiv -(u+1)\mod 2u,\end{cases}$ then
\begin{equation}\label{ef14}
\begin{cases}
x^{(0)}(t^{(u,v)}_{n}, t^{(u,v)}_{n+1})&\ =\ \Phi^{(0)}_1(u,v,n,r),\\
y^{(0)}(t^{(u,v)}_{n}, t^{(u,v)}_{n+1})&\ =\ \Psi^{(0)}_1(u,v,n,r).
\end{cases}
\end{equation}
\end{thm}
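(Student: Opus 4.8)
The plan is to follow the same strategy as in the proofs of Theorems \ref{0mod6}--\ref{1mod6}. Since $n = 6k+3$ is odd, Theorem \ref{genformodd} already certifies that $(\Phi_1^{(1)}, \Psi_1^{(1)})$ is a rational solution of \eqref{e2} and $(\Phi_1^{(0)}, \Psi_1^{(0)})$ is a rational solution of \eqref{e1}. By the uniqueness in Theorem \ref{ot}, it therefore suffices to verify that, under the stated hypotheses, the claimed pair consists of nonnegative integers; the conclusion that these are genuinely $x^{(1)}, y^{(1)}$ (resp. $x^{(0)}, y^{(0)}$) then follows for free.

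First I would record the parities that drive everything: because $F_m$ is even exactly when $3 \mid m$, for $n = 6k+3$ we have $F_n$ even while $F_{n-1}$ and $F_{n-2}$ are odd. Then I would confirm that $r$ is well-defined, invoking Lemma \ref{l1} to produce $\mathbb{O}(u,v)$ for odd $u \geq 3$ and Lemma \ref{l2} (with the odd residue $u+1$) to produce $\mathbb{O}(u,v,u+1)$ for even $u$. In each subcase I would translate the congruence on $vr$ into the companion statement $u \mid ((u-r)v \mp 1)$, respectively $2u \mid ((u-r)v \mp 1)$, which is what guarantees that the fractions $\frac{(u-r)v \pm 1}{u}$ and $\frac{vr \mp 1}{u}$ appearing in $\Phi_1^{(\cdot)}, \Psi_1^{(\cdot)}$ are integers in the first place.

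For nonnegativity I would argue exactly as in the companion theorems and summarize the bounds in a table: in the ``$\equiv 1$''/``$\equiv u+1$'' case one has $r \geq 1$, $u - r \geq 0$, $\frac{(u-r)v+1}{u} \geq 1$, and $\frac{vr-1}{u} \geq 0$, which together with $F_{n-1} \geq 1$ (as $n-1 \geq 2$) forces $\Phi_1^{(1)}, \Psi_1^{(1)} \geq 0$; in the ``$\equiv -1$''/``$\equiv -(u+1)$'' case the observation that $r = u$ would force $u = 1$ (excluded, since then we are in the first case) gives $u - r \geq 1$, and the analogous inequalities yield $\Phi_1^{(0)}, \Psi_1^{(0)} \geq 0$. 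The degenerate value $u = 1$ falls naturally into the first case with $r = 1$, where $\Phi_1^{(1)} = \frac{1}{2}(F_{n-1} + (v-1)F_n - 1)$ and $\Psi_1^{(1)} = \frac{1}{2}(F_{n-1}-1)$ are both manifestly nonnegative.

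The delicate part, and the real content beyond the algebra of Theorem \ref{genformodd}, is integrality, which is exactly where the choice of $r$ as an \emph{odd} residue matters. Since $F_n$ is even, the $F_n$-terms vanish modulo $2$, so $\Phi_1^{(1)}$ and $\Phi_1^{(0)}$ are integers precisely because $r$ is odd, making $rF_{n-1}$ odd and the bracketed expression even. For the $\Psi$-terms I would split on the parity of $u$: when $u$ is odd, $u - r$ is even and $\frac{(u-r)v \pm 1}{u}$ is odd, whereas when $u$ is even (so $v$ is odd by $\gcd(u,v)=1$) one gets $u - r$ odd and, crucially, $2u \mid ((u-r)v \pm 1)$, forcing $\frac{(u-r)v \pm 1}{u}$ to be even; in both situations $(u-r)F_{n-2} + \frac{(u-r)v \pm 1}{u}F_{n-1}$ is odd, so $\Psi_1^{(1)}, \Psi_1^{(0)} \in \mathbb{Z}$. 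I expect the main obstacle to be the even-$u$ bookkeeping: one must upgrade divisibility by $u$ to divisibility by $2u$, which hinges on $v$ being odd and on the reduction $vu \equiv u \pmod{2u}$, and then organize the resulting parities into tables mirroring those used in Theorem \ref{4mod6}.
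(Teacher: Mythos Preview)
Your proposal is correct and follows essentially the same route as the paper: invoke Theorem \ref{genformodd} for the algebraic identity, then verify that with the specified choice of $r$ the quantities $\Phi_1^{(\cdot)}, \Psi_1^{(\cdot)}$ are nonnegative integers via the bounds $r\ge 1$, $u-r\ge 0$ (resp.\ $\ge 1$), and the parity analysis driven by $F_n$ even, $F_{n-1},F_{n-2}$ odd. The paper presents the parity bookkeeping in tabular form (splitting only on the parity of $u$) rather than in prose, but the content is the same; your explicit reduction $uv\equiv u\pmod{2u}$ for even $u$ is exactly what underlies the paper's table entries.
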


\begin{proof}
Thanks to Theorem \ref{genformodd}, we need only to prove that \eqref{ef13} and \eqref{ef14} give nonnegative integers.

Case 1: $\begin{cases}u\mbox{ is odd and }vr \equiv 1\mod u\mbox{, or}\\ u\mbox{ is even and }vr\equiv u+1\mod 2u.\end{cases}$

\begin{table}[H]
\centering
\begin{tabular}{ | c| c| c| c| c| c|}
\hline
$r$& $u-r$ & $\frac{(u-r)v+1}{u}$ & $\frac{vr-1}{u}$ & $\Phi^{(1)}_1\ge 0?$ & $\Psi^{(1)}_1\ge 0?$ \\
\hline
$\ge 1$ & $\ge 0$ & $\ge 1$ & $\ge 0$ & \checkmark & \checkmark   \\
\hline
\end{tabular}
\caption{Case 1's nonnegative solutions for $n = 6k+3$.}
\end{table}

    \begin{table}[H]
\centering
\begin{tabular}{ |c| c| c| c| c| c| c|}
\hline
$n = 6k+3, k\ge 0$ &$r$& $u-r$ & $\frac{(u-r)v+1}{u}$ & $\frac{vr-1}{u}$ & $\Phi^{(1)}_1\in\mathbb{Z}?$ & $\Psi^{(1)}_1\in \mathbb{Z}?$ \\
\hline
$2\nmid u$, $u|(vr-1)$& odd & even & odd & n/a & \checkmark & \checkmark   \\
\hline
$2|u$, $(2u)|(vr-u-1)$& odd & odd & even & odd & \checkmark & \checkmark\\
\hline
\end{tabular}
\caption{Case 1's integral solutions for $n = 6k+3$.}
\end{table}

Case 2: $\begin{cases}u\mbox{ is odd}, u\ge 3,\mbox{ and } vr \equiv -1\mod u, \mbox{ or}\\ u\mbox{ is even and }vr\equiv -(u+1)\mod 2u.\end{cases}$ Observe that $r < u$ because $r = u$ implies that $u = 1$.

\begin{table}[H]
\centering
\begin{tabular}{ | c| c| c| c| c| c|}
\hline
 $r$& $u-r$ & $\frac{(u-r)v-1}{u}$ & $\frac{vr+1}{u}$ & $\Phi^{(0)}_1\ge 0?$ & $\Psi^{(0)}_1\ge 0?$ \\
\hline
  $\ge 1$ & $\ge 1$ & $\ge 0$ & $\ge 1$ & \checkmark & \checkmark   \\
\hline
\end{tabular}
\caption{Case 2's nonnegative solutions for $n = 6k+3$.}
\end{table}

    \begin{table}[H]
\centering
\begin{tabular}{ |c| c| c| c| c| c| c|}
\hline
$n = 6k+3, k\ge 0, u\ge 2$ &$r$& $u-r$ & $\frac{(u-r)v-1}{u}$ & $\frac{vr+1}{u}$ & $\Phi^{(0)}_1\in\mathbb{Z}?$ & $\Psi^{(0)}_1\in \mathbb{Z}?$ \\
\hline
$2\nmid u$, $u|(vr+1)$& odd & even & odd & n/a & \checkmark & \checkmark   \\
\hline
$2|u$, $(2u)|(vr+u+1)$& odd & odd & even & odd & \checkmark & \checkmark\\
\hline
\end{tabular}
\caption{Case 2's integral solutions for $n = 6k+3$.}
\end{table}
\end{proof}

\begin{thm}\label{6cases5mod6}
Let $u,v\in \mathbb{N}$ with $\gcd(u,v)=1$ and let $n = 6k+5$ for some nonnegative integer $k$.
Set  $r = \begin{cases}1, &\mbox{ if }u = 1\mbox{ and }v\mbox{ is odd};\\ 0, &\mbox{ if }u = 1\mbox{ and }v\mbox{ is even};\\ \mathbb{E}(u,v), &\mbox{ if }u\mbox{ is odd}, u\ge 3, \mbox{ and }v\mbox{ is even};\\
\mathbb{O}(u,v), &\mbox{ if }u\mbox{ is odd}, u\ge 3, \mbox{ and }v\mbox{ is odd};\\
\mathbb{O}(u,v,1), &\mbox{ if }u\mbox{ is even}.\end{cases}$

If $\begin{cases}u = 1\mbox{ and }v\mbox{ is odd, or}\\u\mbox{ is odd}, u\ge 3,\mbox{ and }vr \equiv 1\mod u\mbox{, or }\\ u\mbox{ is even and }vr\equiv 1\mod 2u,\end{cases}$ then
\begin{equation}\label{ef15}
\begin{cases}
x^{(1)}(t^{(u,v)}_{n}, t^{(u,v)}_{n+1})&\ =\ \Phi^{(1)}_1(u, v, n, r),\\
y^{(1)}(t^{(u,v)}_{n}, t^{(u,v)}_{n+1})&\ =\ \Psi^{(1)}_1(u, v, n, r).
\end{cases}
\end{equation}

If $\begin{cases} u = 1\mbox{ and }v\mbox{ is even, or}\\
u\mbox{ is odd}, u\ge 3, \mbox{  and }vr \equiv -1\mod u\mbox{, or }\\ u\mbox{ is even and }vr\equiv -1\mod 2u,\end{cases}$ then
\begin{equation}
\begin{cases}\label{ef16}
x^{(0)}(t^{(u,v)}_{n}, t^{(u,v)}_{n+1})&\ =\ \Phi^{(0)}_1(u, v, n, r),\\
y^{(0}(t^{(u,v)}_{n}, t^{(u,v)}_{n+1})&\ =\ \Psi^{(0)}_1(u, v, n, r).
\end{cases}
\end{equation}
\end{thm}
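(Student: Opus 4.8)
The plan is to handle $n=6k+5$ exactly as the earlier odd cases in Theorems~\ref{1mod6} and \ref{3mod6}: all of the algebra is already supplied by Theorem~\ref{genformodd}, which shows that $(\Phi^{(1)}_1,\Psi^{(1)}_1)$ solves \eqref{e2} and $(\Phi^{(0)}_1,\Psi^{(0)}_1)$ solves \eqref{e1} identically in $(u,v,n,r)$. Thus the entire task is to check that the coefficients named in \eqref{ef15} and \eqref{ef16} are \emph{nonnegative integers}; once this is done, Theorem~\ref{ot} upgrades the relevant pair to \emph{the} unique nonnegative solution and simultaneously certifies that the other equation has none.

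The first step is to record the parities that control everything. Since $F_m$ is even iff $3\mid m$, for $n=6k+5$ we have $3\nmid n$, $3\nmid(n-1)$, $3\mid(n-2)$, hence
\[
F_{n-2}\ \text{is even},\qquad F_{n-1}\ \text{is odd},\qquad F_{n}\ \text{is odd}.
\]
This pattern $(\mathrm{even},\mathrm{odd},\mathrm{odd})$ is exactly what forces $r$ to depend on the parity of $v$, in contrast to $n=6k+3$. Indeed, for $\Phi^{(1)}_1=\tfrac12(rF_{n-1}+\tfrac{vr-1}{u}F_n-1)$ to be an integer we need $r+\tfrac{vr-1}{u}$ to be odd (both $F_{n-1}$ and $F_n$ are odd here, whereas for $n=6k+3$ the factor $F_n$ was even, killing the second term mod $2$). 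Taking $r=\mathbb{O}(u,v)$ when $v$ is odd and $r=\mathbb{E}(u,v)$ when $v$ is even makes $r$ and $\tfrac{vr-1}{u}$ of opposite parity in every subcase; the identity $\tfrac{(u-r)v+1}{u}=v-\tfrac{vr-1}{u}$ then shows this quantity is odd, which is precisely what makes $\Psi^{(1)}_1$ integral. (The e1 side is symmetric, with $\tfrac{vr-1}{u}$ replaced by $\tfrac{vr+1}{u}$.) Well-definedness of $r=\mathbb{O}(u,v),\mathbb{E}(u,v),\mathbb{O}(u,v,1)$ is guaranteed by Lemmas~\ref{l1} and \ref{l2}.

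Next I would dispose of the degenerate cases $u=1$ by direct evaluation. For $v$ odd we take $r=1$ and obtain
\[
\Phi^{(1)}_1(1,v,n,1)=\tfrac12(F_{n-1}+(v-1)F_n-1),\qquad \Psi^{(1)}_1(1,v,n,1)=\tfrac12(F_{n-1}-1),
\]
both nonnegative, and both integral because $F_{n-1},F_n$ are odd and $v-1$ is even; for $v$ even we take $r=0$ and compute $\Phi^{(0)}_1,\Psi^{(0)}_1$ in the same way. For $u\ge 2$ I would organize the verification into the two families exactly as before: a nonnegativity table recording that $r,u-r,\tfrac{(u-r)v+1}{u},\tfrac{vr-1}{u}$ (respectively $\tfrac{(u-r)v-1}{u},\tfrac{vr+1}{u}$) are all $\ge 0$---using $1\le r\le u-1$, $u-r\ge 1$, $F_{n-1}\ge 1$, and $F_{n-2}\ge 2$ to clear the $-1$---followed by an integrality table tracking the parities of these four quantities across the subcases $2\nmid u$ with $v$ even, $2\nmid u$ with $v$ odd, and $2\mid u$.

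The main obstacle is the integrality check when $u$ is even. There $\gcd(u,v)=1$ forces $v$ odd, and knowing only $vr\equiv\pm1\pmod u$ would leave the parity of $\tfrac{vr\mp1}{u}$ undetermined, so halving $(\cdots)$ could fail to be integral. The remedy---and the reason $r=\mathbb{O}(u,v,1)$ is defined through the finer modulus $2u$---is that $vr\equiv 1\pmod{2u}$ (resp.\ $vr\equiv -1\pmod{2u}$) makes $\tfrac{vr-1}{u}$ (resp.\ $\tfrac{vr+1}{u}$) \emph{even}, after which $r$ odd gives $r+\tfrac{vr\mp1}{u}$ odd and the argument of $\Phi$ becomes even. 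Once this one delicate parity is pinned down, the remaining entries of both tables follow from the same sign-and-parity bookkeeping already carried out for $n=6k+1$ and $n=6k+3$, now read against $(F_{n-2},F_{n-1},F_n)=(\mathrm{even},\mathrm{odd},\mathrm{odd})$.
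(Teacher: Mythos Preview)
Your proposal is correct and follows essentially the same approach as the paper: reduce via Theorem~\ref{genformodd} to checking that $\Phi^{(\cdot)}_1,\Psi^{(\cdot)}_1$ are nonnegative integers, handle $u=1$ by direct evaluation, and for $u\ge 2$ run the same case-by-case nonnegativity/integrality tables keyed to the parity pattern $(F_{n-2},F_{n-1},F_n)=(\mathrm{even},\mathrm{odd},\mathrm{odd})$. Your added motivation (the identity $\tfrac{(u-r)v+1}{u}=v-\tfrac{vr-1}{u}$ and the explanation of why the $2u$ modulus is needed when $u$ is even) is not in the paper's proof but is consistent with it and clarifies why $r$ must be chosen as it is.
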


\begin{proof}
    Thanks to Theorem \ref{genformodd}, we need only to verify that \eqref{ef15} and \eqref{ef16} give nonnegative integral solutions.

    Case 1: $u = 1$ and $v$ is odd. Then $r = 1$. We have
    \begin{align*}
    &\Phi^{(1)}_{1}(1,v,n,1)\ =\ \frac{1}{2}(F_{n-1} + (v-1)F_n- 1)\ \ge\ 0,\mbox{ and }\\
    &\Psi^{(1)}_{1}(1, v, n, 1)\ =\ \frac{1}{2}(F_{n-1} -1)\ \ge\ 0.
    \end{align*}
    Both $\Phi^{(1)}_{1}(1,v,n,1)$ and $\Psi^{(1)}_{1}(1, v, n, 1)$ are integers because 
    $F_n$, $F_{n-1}$, and $v$ are odd.

    Case 2: $u = 1$ and $v$ is even. Then $r = 0$. We have
    \begin{align*}
    &\Phi^{(0)}_{1}(1,v,n,0)\ =\ \frac{1}{2}(F_n- 1)\ \ge\ 0,\mbox{ and }\\
    &\Psi^{(0)}_{1}(1, v, n, 0)\ =\ \frac{1}{2}(F_{n-2} + (v-1)F_{n-1}-1)\ \ge\ 0.
    \end{align*}
    Both $\Phi^{(0)}_{1}(1,v,n,0)$ and $\Psi^{(0)}_{1}(1, v, n, 0)$ are integers because $F_{n-1}$, $F_n$, and $v-1$ are odd, while $F_{n-2}$ is even.
    
    Case 3: $\begin{cases}u\mbox{ is odd}, u\ge 3,\mbox{ and }vr \equiv 1\mod u\mbox{, or }\\ u\mbox{ is even and }vr\equiv 1\mod 2u.\end{cases}$

\begin{table}[H]
\centering
\begin{tabular}{ | c| c| c| c| c| c|}
\hline
$r$& $u-r$ & $\frac{(u-r)v+1}{u}$ & $\frac{vr-1}{u}$ & $\Phi^{(1)}_1\ge 0?$ & $\Psi^{(1)}_1\ge 0?$ \\
\hline
 $\ge 1$ & $\ge 0$ & $\ge 1$ & $\ge 0$ & \checkmark & \checkmark   \\
\hline
\end{tabular}
\caption{Case 3's nonnegative solutions for $n = 6k+5$.}
\end{table}

    \begin{table}[H]
\centering
\begin{tabular}{ |c| c| c| c| c| c| c|}
\hline
$n = 6k+5, k\ge 0, u\ge 2$ &$r$& $u-r$ & $\frac{(u-r)v+1}{u}$ & $\frac{vr-1}{u}$ & $\Phi^{(1)}_1\in\mathbb{Z}?$ & $\Psi^{(1)}_1\in \mathbb{Z}?$ \\
\hline
$2\nmid u$, $u|(vr-1)$, $2|v$ & even & odd & odd & odd & \checkmark & \checkmark   \\
\hline
$2\nmid u$,  $u|(vr-1)$, $2\nmid v$ & odd & even & odd & even & \checkmark & \checkmark   \\
\hline
$2|u$, $(2u)|(vr-1)$& odd & odd & odd & even & \checkmark & \checkmark\\
\hline
\end{tabular}
\caption{Case 3's integral solutions for $n = 6k+5$.}
\end{table}

    Case 4: $\begin{cases}u\mbox{ is odd}, u\ge 3, \mbox{ and } vr \equiv -1\mod u\mbox{, or }\\ u\mbox{ is even and }vr\equiv -1\mod 2u.\end{cases}$ 
    Observe that $u > r$ because $u = r$ implies that $u = 1$.

    \begin{table}[H]
\centering
\begin{tabular}{  |c| c| c| c| c| c|}
\hline
$r$& $u-r$ & $\frac{(u-r)v-1}{u}$ & $\frac{vr+1}{u}$ & $\Phi^{(0)}_1\ge 0?$ & $\Psi^{(0)}_1\ge 0?$ \\
\hline
$\ge 1$ & $\ge 1$ & $\ge 0$ & $\ge 1$ & \checkmark & \checkmark   \\
\hline
\end{tabular}
\caption{Case 4's nonnegative solutions for $n = 6k+5$.}
\end{table}

    \begin{table}[H]
\centering
\begin{tabular}{ |c| c| c| c| c| c| c|}
\hline
$n = 6k+5, k\ge 0, u\ge 2$ &$r$& $u-r$ & $\frac{(u-r)v-1}{u}$ & $\frac{vr+1}{u}$ & $\Phi^{(0)}_1\in\mathbb{Z}?$ & $\Psi^{(0)}_1\in \mathbb{Z}?$ \\
\hline
$2\nmid u$, $u|(vr+1)$, $2|v$ & even & odd & odd & odd & \checkmark & \checkmark   \\
\hline
$2\nmid u$, $u|(vr+1)$, $2\nmid v$ & odd & even & odd & even & \checkmark & \checkmark   \\
\hline
$2|u$, $(2u)|(vr+1)$& odd & odd & odd & even & \checkmark & \checkmark\\
\hline
\end{tabular}
\caption{Case 4's integral solutions for $n = 6k+5$.}
\end{table}
\end{proof}

\subsection{Application}
We use the theorems in Subsections \ref{neven} and \ref{nodd}  to find formulas for the solutions when $u = 1$ and $v\in \mathbb{N}$.

\begin{cor}
    Let $u = 1$ and $v$ is an odd positive integer. For $k\ge 0$, we have
    \begin{align*}
    &\frac{1}{2}\left(F_{6k+5}+(v-1)F_{6k+6}-1\right)t^{(1, v)}_{6k+6} + \frac{1}{2}\left(F_{6k+5}-1\right)t^{(1, v)}_{6k+7}\ =\ \frac{(t^{(1, v)}_{6k+6}-1)(t^{(1,v)}_{6k+7}-1)}{2};\\
    &\frac{1}{2}\left(F_{6k+1}-1\right)t^{(1,v)}_{6k+1} + \frac{1}{2}\left(F_{6k-1}+(v-1)F_{6k}-1\right)t^{(1,v)}_{6k+2}\ =\ \frac{(t^{(1,v)}_{6k+1}-1)(t^{(1,v)}_{6k+2}-1)}{2};\\
    &\frac{1}{2}\left(F_{6k+1} + (v-1)F_{6k+2}-1\right)t^{(1,v)}_{6k+2} + \frac{1}{2}\left(F_{6k+1}-1\right)t^{(1,v)}_{6k+3}\ =\ \frac{(t^{(1,v)}_{6k+2}-1)(t^{(1,v)}_{6k+3}-1)}{2};
    \end{align*}
    \begin{align*}
    1 + \frac{1}{2}&\left(F_{6k+2} + (v-1)F_{6k+3}-1\right)t^{(1,v)}_{6k+3} + \frac{1}{2}\left(F_{6k+2}-1\right)t^{(1,v)}_{6k+4}\\
    &\ =\ \frac{(t^{(1,v)}_{6k+3}-1)(t^{(1,v)}_{6k+4}-1)}{2};\\
    1 + \frac{1}{2}&\left(F_{6k+4} -1\right)t^{(1,v)}_{6k+4} + \frac{1}{2}\left(F_{6k+2}+(v-1)F_{6k+3}-1\right)t^{(1,v)}_{6k+5}\\
    &\ =\ \frac{(t^{(1,v)}_{6k+4}-1)(t^{(1,v)}_{6k+5}-1)}{2};\\
    1 + \frac{1}{2}&\left(F_{6k+4} + (v-1)F_{6k+5}-1\right)t^{(1,v)}_{6k+5} + \frac{1}{2}\left(F_{6k+4}-1\right)t^{(1,v)}_{6k+6}\\
    &\ =\ \frac{(t^{(1,v)}_{6k+5}-1)(t^{(1,v)}_{6k+6}-1)}{2}.
    \end{align*}
\end{cor}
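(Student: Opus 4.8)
The plan is to derive each of the six identities by specializing the appropriate theorem from Subsections \ref{neven} and \ref{nodd} to $u=1$ and odd $v$; no fresh computation is needed beyond the algebraic identities already secured in Theorems \ref{genformodd} and \ref{genformeven}. Reading the six displayed lines in order, the subscripts pin down the residue of $n$ modulo $6$: the first line has $n=6k+6$ (Theorem \ref{0mod6}), the next two have $n=6k+1$ and $n=6k+2$ (Theorems \ref{1mod6} and \ref{6cases2mod6}), and the last three have $n=6k+3,6k+4,6k+5$ (Theorems \ref{3mod6}, \ref{4mod6}, and \ref{6cases5mod6}). Thus the proof is a bookkeeping check across these six results.

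First I would record what the hypotheses become at $u=1$. In every theorem the prescribed $r$ collapses to $r\in\{0,1\}$: the recipes give $r=0$ for $n\equiv 0,1,2\pmod 6$ and $r=1$ for $n\equiv 3,4,5\pmod 6$, where oddness of $v$ is what selects the branch in Theorems \ref{6cases2mod6} and \ref{6cases5mod6}. Because $u=1$, the congruences $vr\equiv\pm 1\pmod u$ are vacuous, but each theorem already assigns the $u=1$ sub-case to a definite branch; this places $n\equiv 0,1,2\pmod 6$ on the \eqref{e1}-type formula built from $\Phi^{(0)},\Psi^{(0)}$ and $n\equiv 3,4,5\pmod 6$ on the \eqref{e2}-type formula built from $\Phi^{(1)},\Psi^{(1)}$, which is exactly why the leading ``$1+$'' appears in precisely the last three displayed lines.

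Next I would substitute $u=1$ with $r=0$ or $r=1$ into the definitions of $\Phi^{(\cdot)}_i$ and $\Psi^{(\cdot)}_i$. The key simplification is that each denominator $u$ in $\tfrac{vr\pm 1}{u}$ and $\tfrac{(u-r)v\pm 1}{u}$ equals $1$, so every quotient is automatically integral, and since $u-r\in\{0,1\}$ and $r\in\{0,1\}$ most terms vanish. For example, with $n\equiv 1\pmod 6$, $u=1$, $r=0$ one gets $\Phi^{(0)}_1=\tfrac12(F_n-1)$ and $\Psi^{(0)}_1=\tfrac12(F_{n-2}+(v-1)F_{n-1}-1)$, which are exactly the two coefficients in the second displayed identity after setting $n=6k+1$; the other five lines reduce in the same mechanical fashion. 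Once coefficients are matched, the asserted equalities are literally the specializations of \eqref{fe1}--\eqref{fe4}, so they hold.

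The only genuine obstacle is organization rather than mathematics: one must correctly pair each displayed line with its theorem, residue class, value of $r$, and branch, and take mild care at the boundary $k=0$ (where, for instance, the $n=6k+1$ line invokes $F_{-1}$ and $F_0$) by using the extension $F_{-1}=1$ that is consistent with the identity $t^{(u,v)}_n=F_{n-2}u+F_{n-1}v$. After the correspondence table is written out, each of the six verifications is a one-line substitution.
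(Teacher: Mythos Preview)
Your proposal is correct and matches the paper's approach: the corollary is stated without proof in the paper, presented simply as an application of Theorems \ref{0mod6}--\ref{6cases5mod6} specialized to $u=1$ with odd $v$, which is exactly what you outline. Your identification of the residue classes, the value of $r$ in each case, and the resulting branch ($\Phi^{(0)},\Psi^{(0)}$ versus $\Phi^{(1)},\Psi^{(1)}$) is accurate, including the boundary observation about $F_{-1}=1$.
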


\begin{cor}
    Let $u = 1$ and $v$ is an even positive integer. For $k\ge 0$, we have
    \begin{align*}
    &\frac{1}{2}\left(F_{6k+5} -1\right)t^{(1,v)}_{6k+5} + \frac{1}{2}\left(F_{6k+3}+(v-1)F_{6k+4}-1\right)t^{(1,v)}_{6k+6}\ =\ \frac{(t^{(1,v)}_{6k+5}-1)(t^{(1,v)}_{6k+6}-1)}{2};\\
    &\frac{1}{2}\left(F_{6k+5}+(v-1)F_{6k+6}-1\right)t^{(1, v)}_{6k+6} + \frac{1}{2}\left(F_{6k+5}-1\right)t^{(1, v)}_{6k+7}\ =\ \frac{(t^{(1, v)}_{6k+6}-1)(t^{(1,v)}_{6k+7}-1)}{2};\\
    &\frac{1}{2}\left(F_{6k+1}-1\right)t^{(1,v)}_{6k+1} + \frac{1}{2}\left(F_{6k-1}+(v-1)F_{6k}-1\right)t^{(1,v)}_{6k+2}\ =\ \frac{(t^{(1,v)}_{6k+1}-1)(t^{(1,v)}_{6k+2}-1)}{2};
    \end{align*}
    \begin{align*}
    1 + \frac{1}{2}&\left(F_{6k+2}-1\right)t^{(1,v)}_{6k+2} + \frac{1}{2}\left(F_{6k}+(v-1)F_{6k+1}-1\right)t^{(1,v)}_{6k+3}\\
    &\ =\ \frac{(t^{(1,v)}_{6k+2}-1)(t^{(1,v)}_{6k+3}-1)}{2};\\
    1 + \frac{1}{2}&\left(F_{6k+2} + (v-1)F_{6k+3}-1\right)t^{(1,v)}_{6k+3} + \frac{1}{2}\left(F_{6k+2}-1\right)t^{(1,v)}_{6k+4}\\
    &\ =\ \frac{(t^{(1,v)}_{6k+3}-1)(t^{(1,v)}_{6k+4}-1)}{2};\\
    1 + \frac{1}{2}&\left(F_{6k+4} -1\right)t^{(1,v)}_{6k+4} + \frac{1}{2}\left(F_{6k+2}+(v-1)F_{6k+3}-1\right)t^{(1,v)}_{6k+5}\\
    &\ =\ \frac{(t^{(1,v)}_{6k+4}-1)(t^{(1,v)}_{6k+5}-1)}{2};\\
    \end{align*}
\end{cor}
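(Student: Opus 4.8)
The plan is to obtain each of the six displayed identities as the $u=1$ specialization of the matching six-case theorem of this section. Reading the lines of the corollary in order, they correspond to the residues $n\equiv 5,0,1,2,3,4\pmod 6$ and are governed by Theorems~\ref{6cases5mod6}, \ref{0mod6}, \ref{1mod6}, \ref{6cases2mod6}, \ref{3mod6}, and~\ref{4mod6}, respectively. For each residue I would substitute $u=1$ together with the value of $r$ that the relevant theorem prescribes (which for $u=1$ is always $0$ or $1$), read off that the stated branch gives the unique nonnegative integral solution, and then simplify the functions $\Phi$ and $\Psi$. Since those functions were shown in Theorems~\ref{genformodd} and~\ref{genformeven} to satisfy the two Diophantine equations identically in the parameters, no fresh algebra is required beyond these simplifications.

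The computational core is routine once $u=1$ is imposed, because every $1/u$ factor disappears and only $r\in\{0,1\}$ needs to be handled. For example, $\Phi^{(0)}_1(1,v,n,0)=\tfrac{1}{2}(F_n-1)$ and $\Psi^{(0)}_1(1,v,n,0)=\tfrac{1}{2}\bigl(F_{n-2}+(v-1)F_{n-1}-1\bigr)$, whereas $\Phi^{(1)}_1(1,v,n,1)=\tfrac{1}{2}\bigl(F_{n-1}+(v-1)F_n-1\bigr)$ and $\Psi^{(1)}_1(1,v,n,1)=\tfrac{1}{2}(F_{n-1}-1)$, and the subscript-$2$ functions reduce in the same fashion. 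Substituting $n=6k+j$ into these closed forms produces precisely the coefficients appearing in the line of the corollary that handles $n\equiv j\pmod 6$, and the leading term $1+\cdots$ is present exactly when the governing branch carries the superscript $(1)$, that is, when the solution satisfies~\eqref{e2}.

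The only step needing genuine care is deciding, for $u=1$, which of the two branches each residue falls into. Since every congruence modulo $u=1$ is vacuous, the conditions $vr\equiv\pm1\pmod u$ give no information; instead the dichotomy is resolved by the auxiliary hypothesis ``$u\ge 3$'' that each theorem attaches to one branch only, forcing $u=1$ into the unrestricted branch. Tracing this through yields the $(0)$-branch for $n\equiv 0,1\pmod 6$, the $(1)$-branch for $n\equiv 3,4\pmod 6$, and a branch selected by the parity of $v$ for $n\equiv 2,5\pmod 6$ (with $v$ even giving the $(1)$- and $(0)$-branches, respectively), in agreement with the presence or absence of the leading $1$. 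I would close by checking the degenerate case $k=0$, $n=1$ of Theorem~\ref{1mod6}: its hypothesis $(v,n)\neq(1,1)$ holds because $v$ is even, and with $F_{-1}=1$ and $F_0=0$ the coefficient $\Psi^{(0)}_1(1,v,1,0)$ collapses to $0$, consistent with the third displayed identity.
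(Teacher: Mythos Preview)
Your proposal is correct and is exactly the argument the paper intends: the corollary is stated without proof, with the text simply noting that one ``use[s] the theorems in Subsections~\ref{neven} and~\ref{nodd}'' at $u=1$, which is precisely your specialization-and-branch-selection procedure. Your identification of the governing theorem for each residue, the resulting value of $r\in\{0,1\}$, the branch (via the $u\ge 3$ exclusion or the explicit $u=1$ parity clause), and the simplified $\Phi,\Psi$ values all check out line by line.
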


\begin{prob}\normalfont
    Find formulas of the solutions for more general sequences. In the case where consecutive terms of our sequence of interest are not necessarily relatively prime, we consider instead these terms divided by their greatest common divisor. 
\end{prob}

\section{Which equation to use} \label{whicheq}
We have looked at the solutions of \eqref{e1} and \eqref{e2} when $a$ and $b$ are consecutive terms of a given sequence, assuming that $\gcd(a,b) = 1$. However, we can expand our investigation to sequences whose consecutive terms are not necessarily relatively prime \cite{ACLLMM}. To do so, we define $\Gamma: \mathbb N^2\rightarrow\{0,1\}$ as follows: $\Gamma(a,b) = 0$ if 
$$\frac{a}{\gcd(a,b)}x + \frac{b}{\gcd(a,b)}y \ =\ \frac{1}{2}\left(\frac{a}{\gcd(a,b)}-1\right)\left(\frac{b}{\gcd(a,b)}-1\right)$$
has a nonnegative integral solution, and $\Gamma(a,b) = 1$ if 
$$1 + \frac{a}{\gcd(a,b)}x + \frac{b}{\gcd(a,b)}y \ =\ \frac{1}{2}\left(\frac{a}{\gcd(a,b)}-1\right)\left(\frac{b}{\gcd(a,b)}-1\right)$$
has a nonnegative integral solution.

\begin{prob}\normalfont\label{prob2}
    Given a sequence $(a_n)_{n=1}^\infty$, what is the sequence $(\Gamma(a_n, a_{n+1}))_{n=1}^\infty$?
\end{prob}

For example, if we have a geometric progression $(a_n: = ar^{n-1})_{n=1}^\infty$, then 
$$\Gamma(a_n, a_{n+1})\ =\ \Gamma(1, r)\ = \ 0, \mbox{ for all }n\in\mathbb{N}.$$
Hence, the sequence $\Delta((a_n)_{n=1}^\infty) = 1, 1, 1, \ldots$. 
On the other hand, if $b_1 = b$ for some $b\ge 2$, and $b_n = 2b_{n-1} - 1$ for each $n\ge 2$, then
$$\Gamma(b_n, b_{n+1})\ =\ \Gamma(b_n, 2b_{n}-1)\ =\ 2, \mbox{ for all }n\in \mathbb{N},$$
because 
$$1  + b_n\cdot (b_n-2) + (2b_n-1)\cdot 0\ =\ \frac{(b_n-1)(2b_n-2)}{2}.$$
In this case, we have $\Delta((b_n)_{n=1}^\infty) = 2, 2, 2, \ldots$.

This section presents selected results from \cite{ACLLMM}, including a theorem to compute $\Gamma(a,b)$ and its application in solving Problem \ref{prob2} for various sequences. 

Given $(a,b)\in \mathbb{N}^2$ with $b/\gcd(a,b) > 1$, define $\Theta (a,b)$ to be the unique multiplicative inverse of $a/\gcd(a,b)$ in modulo $b/\gcd(a,b)$ such that $0<\Theta (a,b)<b/\gcd(a,b)$. 

\begin{thm}\cite[Theorem 1.1]{ACLLMM}\label{thm1.1} Let $a, b\in \mathbb{N}$. If $a$ divides $b$ or $b$ divides $a$, then $\Gamma(a,b) = 0$. Otherwise, the following hold. 
    \begin{enumerate}
        \item [a)] When $a/\gcd(a,b)$ is odd, then $\Gamma(a,b) = 0$ if and only if $\Theta(b,a)$ is odd.
        \item [b)] When $a/\gcd(a,b)$ is even, then $\Gamma(a,b) = 0$ if and only if $\Theta(a,b)$ is odd.
    \end{enumerate}
\end{thm}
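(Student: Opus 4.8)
The plan is to reduce everything to the coprime pair and then, in the nontrivial case, read off the sign of a single coordinate of the candidate solution from the parity of the relevant modular inverse. Throughout write $d=\gcd(a,b)$, $a'=a/d$, $b'=b/d$, so that $\gcd(a',b')=1$ and, by definition, $\Gamma(a,b)$ depends only on $(a',b')$; note also that $a\mid b\iff a'=1$ and $b\mid a\iff b'=1$. First I would dispose of the divisibility case: if $a'=1$ or $b'=1$ then $k:=\tfrac12(a'-1)(b'-1)=0$ and $(x,y)=(0,0)$ satisfies the first equation in the definition of $\Gamma$, so $\Gamma(a,b)=0$; this is precisely the regime where $\Theta$ is undefined, which is why it is separated out. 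From now on $a',b'\ge 2$, and since $\gcd(a',b')=1$ at most one of them is even, so the split ``$a'$ odd'' versus ``$a'$ even (hence $b'$ odd)'' is exhaustive and matches cases (a) and (b).

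The heart of the argument is case (a), $a'$ odd. Mirroring the proof of Theorem \ref{ot} but reducing modulo $a'$ rather than $b'$, let $\hat y\in[0,a'-1]$ be the unique residue with $b'\hat y\equiv k\pmod{a'}$ and set $\hat x:=(k-b'\hat y)/a'\in\mathbb Z$, so that $a'\hat x+b'\hat y=k$. Since any nonnegative solution of \eqref{e1} is unique (Theorem \ref{ot}) and must have $y$-coordinate in $[0,a'-1]$ (otherwise $(x+b',y-a')$ would be a second nonnegative solution), we obtain $\Gamma(a,b)=0\iff\hat x\ge 0$. I would then relate $\hat y$ to $\theta:=\Theta(b,a)$, the inverse of $b'$ modulo $a'$: multiplying $b'\hat y\equiv k$ by $\theta$ and using $2k=(a'-1)(b'-1)\equiv 1-b'\pmod{a'}$ yields $2\hat y\equiv\theta-1\pmod{a'}$. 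Because $a'$ is odd, exactly one of $\theta-1$ and $\theta-1+a'$ is even, and that one equals $2\hat y$; hence $\theta$ odd forces $\hat y=(\theta-1)/2$, while $\theta$ even forces $\hat y=(\theta-1+a')/2$.

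Substituting each value back and writing $b'\theta=1+a'c$, a short computation gives $\hat x=(b'-1-c)/2$ in the odd case and $\hat x=-(1+c)/2$ in the even case, whence $\Gamma(a,b)=0\iff\theta$ is odd, which is case (a). The single inequality carrying the whole argument—and the step I expect to be the real, if modest, obstacle—is the bound $1\le c\le b'-1$: positivity holds since $b'\theta>1$, while $c<b'$ follows from $\theta\le a'-1$ via $c=(b'\theta-1)/a'\le b'-(b'+1)/a'<b'$. This bound is exactly what pins the sign, forcing $\hat x=(b'-1-c)/2\ge 0$ when $\theta$ is odd and $\hat x=-(1+c)/2<0$ when $\theta$ is even; without it neither conclusion is determined.

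Finally, case (b) ($a'$ even, hence $b'$ odd) follows from the symmetry of \eqref{e1} and \eqref{e2} under the swap $(a',x)\leftrightarrow(b',y)$. Since $k$ and $\Gamma$ are symmetric in $a$ and $b$, running the identical computation modulo the odd number $b'$—now defining $\hat x\in[0,b'-1]$ with $a'\hat x\equiv k\pmod{b'}$, $\hat y:=(k-a'\hat x)/b'$, and using $2k\equiv 1-a'\pmod{b'}$—replaces $\theta=\Theta(b,a)$ by $\phi=\Theta(a,b)$ and gives $\Gamma(a,b)=0\iff\hat y\ge 0\iff\phi$ is odd. This is exactly case (b), completing the proof.
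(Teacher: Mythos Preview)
Your proof is correct and follows essentially the same route as the paper: both reduce to the coprime pair, handle the divisibility case trivially, and in case (a) hinge on the congruence $2\hat y+1\equiv\Theta(b,a)\pmod{a'}$ together with the range of the $y$-coordinate, then invoke symmetry for case (b). The only packaging difference is that the paper argues directly from a given solution that $2y+1=\Theta(b,a)$ when $\Gamma=0$ and $A-(2y+1)=\Theta(b,a)$ when $\Gamma=1$, reading off the parity immediately, whereas you split on the parity of $\theta$ first and then compute $\hat x$ explicitly via the auxiliary integer $c$; both arrive at the same conclusion with comparable effort.
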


\begin{proof}
  The first statement follows from 
  $$\Gamma(a,b)\ =\ \begin{cases} \Gamma(1, b/a), &\mbox{ if }a|b;\\ \Gamma(a/b, 1), &\mbox{ if }b|a;\end{cases}\ =\ 0.$$ 

  Suppose that $a$ does not divide $b$, $b$ does not divide $a$, and $a/\gcd(a,b)$ is odd. Let $A = a/\gcd(a,b)$ and $B = b/\gcd(a,b)$. 
  
    If $\Gamma (a,b)=0$, then $Ax + By = (A-1)(B-1)/2$; equivalently, 
    $$2Ax+2By\ =\ AB-A-B+1.$$ 
    Hence, 
    $$(2y+1)B \ \equiv\ 1 \mod A.$$ 
    Since $\Theta (b,a)B \equiv 1$ mod $A$ and $\gcd(A, B) = 1$, we have 
    $$2y+1 \ \equiv\ \Theta (b,a) \mod A.$$
    Observe that
    $$
        0 \ <\ 2y + 1\ =\ \frac{AB-A-2Ax+1}{B}\ < \ A.
    $$
    Therefore, $2y+1=\Theta (b,a)$ and so, $\Theta(b,a)$ is odd.
    
    If $\Gamma (a,b)=1$, then  $1+Ax + By = (A-1)(B-1)/2$; equivalently,
    $$2Ax+2By\ =\ AB-A-B-1.$$ 
    Hence, 
    $$-(2y+1)B \ \equiv\ 1\mod A.$$
    Since $\Theta (b,a)B \equiv 1 \mod A$ and $\gcd(A, B) = 1$, we have 
    $$A-(2y+1)\ \equiv\ \Theta(b,a)\mod A.$$
    Observe that $A-(2y+1) < A$ and
    $$A-(2y+1) \ =\ A - \frac{AB-2Ax-A-1}{B}\ =\ \frac{2Ax+A+1}{B}\ > \ 0.$$
    Therefore, $A-(2y+1)  = \Theta(b, a)$. That $A$ is odd implies that $\Theta(b,a)$ is even. 
    
    We have shown that if $A$ is odd, $\Gamma(a,b) = 0$ if and only if $\Theta(b,a)$ is odd. 

    It remains to show that if $A$ is even, then $\Gamma(a,b) = 0$ if and only if $\Theta(a, b)$ is odd. However, this is obvious from the fact that when $A$ is even, we have odd $B$. By above, $\Gamma(a,b) = 0$ if and only if $\Theta(a,b)$ is odd. 
\end{proof}

\begin{rek}\normalfont
In the proof of Theorem \ref{thm1.1} for the case $a\nmid b$, $b\nmid a$, and $a/\gcd(a,b)$ is even, we need only $b/\gcd(a,b)$ is odd, which is guaranteed by $a/\gcd(a,b)$ is even. Therefore, the second statement of Theorem \ref{thm1.1} can be restated as follows: suppose that $a\nmid b$ and $b\nmid a$. The following hold.
    \begin{enumerate}
        \item [a)] When $a/\gcd(a,b)$ is odd, then $\Gamma(a,b) = 0$ if and only if $\Theta(b,a)$ is odd.
        \item [b)] When $b/\gcd(a,b)$ is odd, then $\Gamma(a,b) = 0$ if and only if $\Theta(a,b)$ is odd.
    \end{enumerate}
\end{rek}

Next, we apply Theorem \ref{thm1.1} to different sequences $(a_n)_{n=1}^\infty$ and determine the equation used by consecutive terms of $(a_n)_{n=1}^\infty$. Let $\Delta((a_n)_{n=1}^\infty) := (\Gamma(a_n, a_{n+1}))_{n=1}^\infty$.

\begin{thm}\cite[Theorem 1.5]{ACLLMM}\label{thm1.5}
For each $k \in \mathbb{N}$, the sequence $\Delta((n^k)_n)$ is eventually $0,1,0,1,0,1,\dots$. 
\end{thm}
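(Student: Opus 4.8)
The plan is to apply Theorem~\ref{thm1.1} to the coprime pair $(a,b)=(n^k,(n+1)^k)$ and thereby reduce the whole question to the parity of a single modular inverse, which I then evaluate for large $n$. Since $\gcd(n,n+1)=1$ we have $\gcd(n^k,(n+1)^k)=1$, so $A=n^k$ and $B=(n+1)^k$; for $n\ge 2$ both exceed $1$ and are coprime, hence neither divides the other (the term $n=1$ gives $\Gamma=0$ trivially and is absorbed by the word ``eventually''). By Theorem~\ref{thm1.1}, when $n$ is even $\Gamma(n^k,(n+1)^k)=0$ if and only if $\Theta(n^k,(n+1)^k)$ is odd, and when $n$ is odd $\Gamma(n^k,(n+1)^k)=0$ if and only if $\Theta((n+1)^k,n^k)$ is odd. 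So it suffices to show that, as $n$ runs over each fixed residue class modulo $2$, the relevant inverse has an eventually constant parity, and that the two classes yield \emph{opposite} parities.

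Next I would compute the inverses by geometric summation. From $(1+n)\sum_{j=0}^{k-1}(-1)^j n^j=1-(-1)^k n^k$ one reads off $(n+1)^{-1}\equiv\sum_{j=0}^{k-1}(-1)^j n^j \pmod{n^k}$, and likewise $n^{-1}\equiv-\sum_{j=0}^{k-1}(n+1)^j \pmod{(n+1)^k}$. Raising each to the $k$th power and discarding every monomial of degree $\ge k$ (which vanishes modulo $n^k$, respectively modulo $(n+1)^k$) represents each inverse, modulo its modulus, as a \emph{fixed} polynomial in $n$ of degree $k-1$. Its leading coefficient is $\pm\binom{2k-2}{k-1}$, carrying the sign $(-1)^{k-1}$ on the $n^k$ side and the global sign $(-1)^k$ on the $(n+1)^k$ side (the latter from the minus sign in $n^{-1}\equiv-\sum(n+1)^j$). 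For large $n$ this polynomial has absolute value $o(n^k)$, so its least positive representative is obtained by adding \emph{at most one} copy of the modulus, the choice being governed by the sign above.

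Finally I would extract the parity. Restricting $n$ to one residue class makes every power $n^i$ constant modulo $2$, so the representative of each inverse has eventually constant parity; hence $\Gamma(\text{even }n)$ and $\Gamma(\text{odd }n)$ are each eventually constant. To see they are opposite, note that modulo $2$ the two truncated polynomials coincide: the alternating signs collapse, leaving the common reduction of $(1+x+\cdots+x^{k-1})^k$ over $\mathbb{F}_2$, whose coefficient sum in degrees $<k$ I call $\sigma$. Evaluating at the (odd) argument $n$, respectively $n+1$, shows both ``raw'' inverses are $\equiv\sigma\pmod 2$, so all the asymmetry lives in the range adjustment. Because the $n^k$-side adjusts exactly when $(-1)^{k-1}<0$ (i.e.\ $k$ even) while the $(n+1)^k$-side adjusts exactly when $(-1)^k<0$ (i.e.\ $k$ odd), precisely one of the two sides picks up an extra copy of its odd modulus; thus one inverse is $\equiv\sigma$ and the other $\equiv\sigma+1\pmod 2$, regardless of the parity of $k$. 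This forces $\Gamma(\text{even }n)\ne\Gamma(\text{odd }n)$ eventually, which is exactly the alternation $0,1,0,1,\dots$.

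The main obstacle is this last parity bookkeeping. Since the modulus $n^k$ (or $(n+1)^k$) is odd, passing to the residue in $(0,\text{modulus})$ can flip parity once per subtracted copy, so the argument must pin down exactly how many copies are removed; a naive ``reduce the polynomial mod the modulus'' is not parity-preserving. The decisive structural fact that makes the estimate clean is that the complementary signs $(-1)^{k-1}$ and $(-1)^k$ guarantee that this adjustment contributes one odd copy on exactly one of the two sides, yielding the constant offset of $1$ that drives the eventual alternation.
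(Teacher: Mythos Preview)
Your proof is correct and rests on the same ingredients as the paper's---Theorem~\ref{thm1.1} together with the geometric-series expression for the inverse and a truncation to degree $<k$---but the organization is genuinely different. The paper fixes the odd modulus $n^k$ and computes \emph{both} neighbor inverses $s=\Theta((n-1)^k,n^k)$ and $t=\Theta((n+1)^k,n^k)$ there, shows $s+t$ has the parity of $n^k$ (forcing $\Gamma$ to flip across an odd-index position), and then needs a separate check that $\Gamma(M^k,(M+1)^k)=\Gamma((M+2)^k,(M+3)^k)$; it also treats odd and even $k$ in separate cases. You instead always pass to the odd modulus dictated by Theorem~\ref{thm1.1} (so $\Theta((n+1)^k,n^k)$ for odd $n$ and $\Theta(n^k,(n+1)^k)$ for even $n$), observe that over $\mathbb{F}_2$ the two truncated polynomials coincide, and push all of the asymmetry into the single range adjustment, whose direction is governed by the complementary signs $(-1)^{k-1}$ and $(-1)^k$. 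This buys you a uniform argument in $k$ and removes the need for the ``two steps apart'' verification, at the cost of tracking two different moduli rather than one. One minor expository point: when you say the truncation is ``a fixed polynomial in $n$'', note that on the $(n+1)^k$ side it is naturally a polynomial in $n+1$; you use this correctly later (``evaluating at the odd argument $n$, respectively $n+1$''), so the slip is only in wording.
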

\begin{proof}
    Suppose that $k$ is odd. For $n \in \mathbb{N}_{>2}$, let $s = \Theta((n-1)^k, n^k)$ and $t = \Theta((n+1)^k, n^k)$. Since $k$ is odd, we can write
    $$-s \ =\ (n^k-1)^k s\ =\ \left(\sum_{i=1}^{k} n^{i-1}\right)^k (n-1)^k s\mod n^k.$$
    Using $(n-1)^k s \equiv 1 \mod{n^k}$, we obtain 
    \begin{equation} \label{1.5.1}
        s \ \equiv\ n^k - \left( \sum_{i=1}^k n^{i-1}\right)^k \mod n^k.
    \end{equation}
    Similarly, 
    \begin{equation} \label{1.5.2}
        t \ \equiv\ \left( \sum_{i=1}^k (-1)^{i-1} n^{i-1}\right)^k \mod n^k.
    \end{equation}
    
    Let $u(X) := \left( \sum_{i=1}^k X^{i-1}\right)^k$ and $v(X) := \left( \sum_{i=1}^k (-1)^{i-1} X^{i-1}\right)^k$. Since $(u+v)(X)$ is an even polynomial, the coefficient of each odd power in $v(X)$ is  the negative of the corresponding coefficient of $u(X)$. On the other hand, $(u-v)(X)$ is an odd polynomial, the coefficient of each even power in $v(X)$ equals the corresponding coefficient of $u(X)$. It follows that if 
    $g(X)$ is the tail of $u(X)$ up to the power $k-1$, then $g(-X)$ is the tail of $v(X)$ up to the power $k-1$. By \eqref{1.5.1} and \eqref{1.5.2}, 
    \begin{equation*}
        s \ \equiv\ n^k - g(n) \quad \mbox{ and } \quad t \ \equiv\ g(-n) \mod{n^k}.
    \end{equation*}
    Choose $N \in \mathbb{N}$ such that for all $n \ge N$, 
    $$n^k \ >\ g(n)\ \ge \ g(-n), \quad  g(n) \ >\ 0,\quad \mbox{ and }\quad g(-n) > 0,$$
    which is possible due to odd $k$.
    It follows that
    \begin{equation} \label{1.5.4}
        s \ =\ n^k - g(n) \quad \mbox{ and } \quad t \ =\ g(-n), \mbox{ for all } n \ge N.
    \end{equation}
    Since all coefficients of $g(x) - g(-x)$ are even, $s+t = n^k - (g(n)-g(-n))$ has the same parity as $n^k$. 
    
    Take an even $M \ge N-1$. By above, $\Theta(M^k, (M+1)^k) + \Theta ((M+2)^k, (M+1)^k)$ has the same parity as $(M+1)^k$, which is odd. Hence $\Theta(M^k, (M+1)^k)$ and $\Theta ((M+2)^k, (M+1)^k)$ have different parities. By Theorem \ref{thm1.1},
    \begin{equation*}
        \Gamma(M^k, (M+1)^k) \ \neq\ \Gamma((M+1)^k, (M+2)^k).
    \end{equation*}
    Here we use the assumption that $M$ is even. To finish the proof that $\Delta((n^k)_{n=1}^\infty)$ eventually alternates between $0$ and $1$, it remains to verify that $\Gamma((M+1)^k, (M+2)^k)\neq \Gamma((M+2)^k, (M+3)^k)$ or equivalently, $\Gamma(M^k, (M+1)^k) = \Gamma((M+2)^k, (M+3)^k)$. By \eqref{1.5.4},
    \begin{align*}
        &\Theta(M^k, (M+1)^k)\ =\ (M+1)^k - g(M+1),\mbox{ and }\\
        &\Theta((M+2)^k, (M+3)^k)\ =\ (M+3)^k - g(M+3).
    \end{align*}
    Therefore,
    $\Theta((M+2)^k, (M+3)^k) - \Theta(M^k, (M+1)^k) = (M+3)^k - (M+1)^k - (g(M+3)-g(M+1))$, which is even; that is, $\Theta((M+2)^k, (M+3)^k)$ and $\Theta(M^k, (M+1)^k)$ have the same parity. By Theorem \ref{thm1.1}, $\Gamma(M^k, (M+1)^k) = \Gamma((M+2)^k, (M+3)^k)$.

   The proof for even $k$ is similar and is left for interested readers, who may also find the proof in \cite[Section 3]{ACLLMM}.    
\end{proof}

\begin{rek}\normalfont
  The readers may refer to \cite[Theorem 1.5]{ACLLMM} for an upper bound of when the alternating pattern starts. 
\end{rek}

\begin{thm}\cite[Theorem 1.6]{ACLLMM}\label{thm1.6}
Let $(a_{n})_{n\ge 1}$ be an arithmetic progression: $a_n = a + (n-1)r$ with $a, r\in \mathbb{N}$. Then $\Delta((a_{n})_{n})$ is either $1,0,1,0,\ldots$ or $0,1,0,1\ldots$. 
\end{thm}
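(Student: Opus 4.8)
The plan is to reduce to the coprime case and then read $\Gamma$ off the parity of a single modular inverse whose behaviour along the progression I can control. First I would note that the gcd is constant: since $a_{n+1}-a_n=r$, we have $\gcd(a_n,a_{n+1})=\gcd(a_n,r)=\gcd(a,r)=:g$ for every $n$. As $\Gamma(a,b)$ depends only on $a/\gcd(a,b)$ and $b/\gcd(a,b)$, setting $A_n:=a_n/g$ and $R:=r/g$ gives $\Gamma(a_n,a_{n+1})=\Gamma(A_n,A_{n+1})$, where $(A_n)$ is again an arithmetic progression of common difference $R$ with $\gcd(A_n,A_{n+1})=1$; moreover $\gcd(A_n,R)=1$, so each $A_n$ is invertible modulo $R$ and $A_n\equiv A_1\pmod R$. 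Thus it suffices to prove $\Gamma(A_n,A_{n+1})\neq\Gamma(A_{n+1},A_{n+2})$ for all $n$, which, the values lying in $\{0,1\}$, forces the alternation.

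Next I would translate Theorem~\ref{thm1.1} (and the remark following it) into a statement about $W_m:=R^{-1}\bmod A_m\in(0,A_m)$. Since $A_{m+1}\equiv R\pmod{A_m}$ and $A_m\equiv -R\pmod{A_{m+1}}$, we get $\Theta(A_{m+1},A_m)=W_m$ and $\Theta(A_m,A_{m+1})=A_{m+1}-W_{m+1}$. Hence: if $A_m$ is odd then $\Gamma(A_m,A_{m+1})=0\iff W_m$ is odd, and if $A_{m+1}$ is odd then $\Gamma(A_m,A_{m+1})=0\iff W_{m+1}$ is even. I would then split on the parity of the middle term $A_{n+1}$. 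If $A_{n+1}$ is odd, both $\Gamma(A_n,A_{n+1})$ and $\Gamma(A_{n+1},A_{n+2})$ are governed by $W_{n+1}$: the former vanishes iff $W_{n+1}$ is even and the latter iff $W_{n+1}$ is odd, so exactly one vanishes and they differ. If $A_{n+1}$ is even, then coprimality forces its neighbours $A_n,A_{n+2}$ odd and hence $R$ odd; now the two values are governed by $W_n$ and $W_{n+2}$, and a short computation shows they differ precisely when $W_n\equiv W_{n+2}\pmod 2$.

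The crux, and the step I expect to be the main obstacle, is controlling the parity of the inverse $W_m$ as the modulus $A_m$ varies, i.e. establishing $W_n\equiv W_{n+2}\pmod 2$. The device I would use is this: writing $RW_m=1+s_mA_m$ with $s_m:=(RW_m-1)/A_m$, the bound $0<W_m<A_m$ forces $s_m\in\{0,1,\dots,R-1\}$, while reducing $RW_m=1+s_mA_m$ modulo $R$ gives $s_mA_m\equiv-1\pmod R$, whence $s_m\equiv-A_1^{-1}\pmod R$ because $A_m\equiv A_1\pmod R$. Therefore $s_m$ is the same residue for every $m$; call it $s$. When $R$ is odd, $RW_m=1+sA_m$ yields $W_m\equiv 1+sA_m\pmod 2$, and since $A_n$ and $A_{n+2}$ differ by $2R$ they have equal parity, so $W_n\equiv W_{n+2}\pmod 2$, settling the even-middle case. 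Finally I would dispose of the only degenerate situation, $A_1=1$ (possible only for the first term, where the $\Theta$-characterisation is unavailable because $A_1\mid A_2$): there $\Gamma(A_1,A_2)=0$ directly, and the constant $s=R-1$ gives $W_m=1+(R-1)(m-1)$, from which a direct check yields $\Gamma(A_2,A_3)=1$, so the first pair of values also differs.
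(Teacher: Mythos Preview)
Your proposal is correct and follows essentially the same architecture as the paper's proof: reduce to the coprime case via the constant gcd $g=\gcd(a,r)$, split on the parity of the middle term $A_{n+1}$, and in each case compare the parities of the relevant modular inverses coming from Theorem~\ref{thm1.1}. Your device of showing that the quotient $s_m=(RW_m-1)/A_m$ is constant along the progression is a clean repackaging of the paper's Case~2 computation (where the paper instead combines two B\'ezout relations to force $k_1+k_2=A_{n+1}$ and hence $x-y\equiv A_n\pmod 2$), but the two arguments are equivalent in content.
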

\begin{proof}
Let $n \in \mathbb{N}$. We have that $\gcd(a_n, a_{n+1})$ divides $(2a_{n+1} - a_n)$, which is $a_{n+2}$, so
$$\gcd(a_n, a_{n+1})\,|\,\gcd(a_{n+1}, a_{n+2}).$$
Conversely, $\gcd(a_{n+1}, a_{n+2})$ divides $(2a_{n+1} - a_{n+2})$, which is $a_n$, 
$$\gcd(a_{n+1}, a_{n+2})\,|\, \gcd(a_n, a_{n+1}).$$ Hence, $\gcd(a_n, a_{n+1}) = \gcd(a_{n+1}, a_{n+2})$. 
Furthermore, for $n\in \mathbb{N}$, writing  $a_n = a + r(n-1)$ gives
$$\gcd(a_n, a_{n+1})\ =\ \gcd(a + r(n-1), a + rn)\ =\ \gcd(a+r(n-1), r) \ =\ \gcd(a,r).$$
Therefore, we can set $d := \gcd(a_n, a_{n+1})$ for all $n\in \mathbb{N}$. 

We need to show that
$$\Gamma(a_n, a_{n+1}) \ \neq\ \Gamma(a_{n+1}, a_{n+2}), \mbox{ for all }n\in\mathbb{N}.$$
We first suppose that $n\ge 2$ to take advantage of the fact that $a_n\nmid a_{n+1}$.

Case 1: $a_{n+1}/d$ is odd. Let $x = \theta(a_n, a_{n+1})$ and $y = \theta(a_{n+2}, a_{n+1})$. Then
$$\frac{a_n}{d} x \ \equiv\ 1 \quad \mbox{ and }\quad\left(\frac{a_n}{d} + \frac{2r}{d}\right) y\ \equiv\ 1 \mod \left(\frac{a_n}{d} + \frac{r}{d}\right).$$
Hence,
$$
\frac{r}{d}\left(\frac{a_n}{d} + \frac{r}{d} - x\right)\ \equiv\ 1 \quad \mbox{ and }\quad \frac{r}{d} y\ \equiv\ 1\mod \left(\frac{a_n}{d} + \frac{r}{d}\right).
$$
Furthermore, since $1 \le x < a_{n+1}/d$, we know that 
$$0\ <\ \frac{a_{n+1}}{d} - x, y  \ <\ \frac{a_{n+1}}{d}.$$
It follows that $a_{n+1}/d - x = y$, so $x + y = a_{n+1}/d$, which is odd. Hence, $x$ and $y$ have different parities. By Theorem \ref{thm1.1}, $\Gamma(a_n, a_{n+1}) \neq \Gamma(a_{n+1}, a_{n+2})$.

Case 2: $a_{n+1}/d$ is even. Then $a_n/d$ and $(a_n+2r)/d$ are odd. Let $x = \theta(a_{n+1}, a_n)$ and $y = \theta(a_{n+1}, a_{n+2})$. Then
$$\left(\frac{a_n}{d} + \frac{r}{d}\right) x \ \equiv\ 1 \mod\frac{a_n}{d}\quad\mbox{ and }\quad\left(\frac{a_n}{d} + \frac{r}{d}\right) y \ \equiv\ 1 \mod \left(\frac{a_n}{d} + \frac{2r}{d}\right).$$
Equivalently, there exist positive integers $k_1, k_2 < (a_n + r)/d$ such that
$$
\left(\frac{a_n}{d} + \frac{r}{d}\right) x \ =\ 1 + k_1 \frac{a_n}{d}\quad\mbox{ and }\quad\left(\frac{a_n}{d} + \frac{r}{d}\right) y\ =\ 1 + k_2 \left(\frac{a_n}{d} + \frac{2r}{d}\right),
$$
which gives
$$\frac{a_n + r}{d}(x - y + 2k_2) \ =\ \frac{a_n}{d}(k_1 + k_2).$$
Since $\gcd((a_n + r)/d, a_n/d) = 1$, $(a_n + r)/d$ divides $k_1 + k_2$. Moreover, $0 < k_1 + k_2 < 2(a_n + r)/d$ because $0 < k_1, k_2 < (a_n + r)/d$, so
$$k_1 + k_2 \ =\ \frac{a_n + r}{d} \ \Longrightarrow\ x - y + 2k_2 \ =\ \frac{a_n}{d}.$$
Odd $a_n/d$ implies that $x$ and $y$ must have different parities. Hence, $\Gamma(a_n, a_{n+1}) \neq \Gamma(a_{n+1}, a_{n+2})$.

It remains to show that $\Gamma(a_1, a_2) \neq \Gamma(a_2, a_3)$. If $a_1 \nmid a_2$, the same reasoning as above gives $\Gamma(a_1, a_2) \neq \Gamma(a_2, a_3)$. If $a_1 \mid a_2$, $\Gamma(a_1, a_2) = 0$. Let $a_2 = pa_1 = pa$ for some $p \ge 2$. Then
$$a_3 \ =\ 2a_2 - a_1 \ =\ (2p - 1)a \quad \text{and}\quad \Gamma(a_2, a_3) \ =\ \Gamma(pa, (2p - 1)a)\ =\ \Gamma(p, 2p - 1).$$
We have
$$1 + p\cdot (p-2) + (2p-1)\cdot 0\ =\ \frac{(p-1)(2p-2)}{2},$$
so $\Gamma(a_2, a_3) = 1 \neq \Gamma(a_1, a_2)$.
\end{proof}

\begin{prob}\normalfont
Let $\mathcal{F} = \{(a_n)_{n=1}^\infty: \Delta((a_n)_n)\mbox{ eventually alternates between }0\mbox{ and }1\}$. Characterize sequences that belong to $\mathcal{F}$.  According to Theorems \ref{thm1.5} and \ref{thm1.6}, $(n^k)_{n=1}^\infty$ and arithmetic progressions of positive integers are in $\mathcal{F}$. 
\end{prob}

The next result studies the behavior of $\Gamma$ when we fix one parameter and let the other vary.

\begin{thm}\cite[Theorem 1.8]{ACLLMM}
    Let $k \in \mathbb{N}$. The following holds.
    \begin{enumerate}
    \item If $k$ is odd, $(\Gamma(k, n))_{n=1}^\infty$ has period $k$. In each period, the number of $0$'s is one more than the number of $1$'s.
    \item If $k$ is even, $(\Gamma(k,n))_{n=1}^\infty$ has period $2k$. In each period, the number of $0$'s is two more than the number of $1$'s.
    \end{enumerate}
\end{thm}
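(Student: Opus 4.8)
The plan is to reduce every value $\Gamma(k,n)$ to the parity of a single modular inverse via Theorem~\ref{thm1.1}, and then to run all the parity bookkeeping through one reciprocity identity. Throughout fix $k$ and write $d=\gcd(k,n)$, $A=k/d$, $m=n/d$, so $\gcd(A,m)=1$. By the Remark following Theorem~\ref{thm1.1}, when $A$ is odd we have $\Gamma(k,n)=0$ iff $\Theta(n,k)=m^{-1}\bmod A$ is odd, and when $A$ is even (which forces $m$ odd) we have $\Gamma(k,n)=0$ iff $\Theta(k,n)=A^{-1}\bmod m$ is odd. First I would check that the divisor cases fit this scheme: if $n\mid k$ then $m=1$, so $\Theta(n,k)=1$ is odd, matching $\Gamma(k,n)=0$; and if $k\mid n$ then $\Gamma(k,n)=0$ directly. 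The single computational engine is the reciprocity identity
$$A\,\Theta(k,n)+m\,\Theta(n,k)\ =\ 1+Am,$$
valid whenever $A,m>1$, which one proves by checking both sides are $\equiv 1$ modulo each of $A$ and $m$ and comparing magnitudes. A quick consequence, read off modulo $2$ when $A$ is even, is that $\Theta(n,k)$ is then forced odd, so $A-\Theta(n,k)$ is odd; I will use this repeatedly.

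For periodicity, note that $\gcd(k,n+P)=\gcd(k,n)=d$ for any multiple $P$ of $k$, so the regime ($A$ odd versus even) is unchanged. In the $A$-odd regime, $\Theta(n,k)$ depends only on $m\bmod A$, which is fixed under $n\mapsto n+k$, so this part already has period $k$. In the $A$-even regime the modulus $m$ itself shifts, so I would track $c_j:=A^{-1}\bmod(m+jA)$; since $m+jA\equiv m\bmod A$ keeps the $\Theta(n,k)$ term of the identity fixed, the reciprocity identity gives $c_j=c_0+j\bigl(A-\Theta(n,k)\bigr)$. As $A-\Theta(n,k)$ is odd, the parity of $c_j$ flips exactly when $j$ is odd. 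Taking $j=2$ (that is, $n\mapsto n+2k$) preserves parity, giving period $2k$; taking $j=1$ shows the parity truly flips, so $2k$ cannot be replaced by $k$. This is precisely why the even case requires $2k$.

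For the count I would use the reflection $n\mapsto P-n$ with $P=k$ for odd $k$ and $P=2k$ for even $k$; it preserves $d$, hence the regime. In the $A$-odd regime $(P-n)/d\equiv -m\bmod A$, so $\Theta(P-n,k)=A-\Theta(n,k)$ and the two inverses sum to the odd number $A$, forcing opposite parities. In the $A$-even regime, computing $A^{-1}\bmod(2A-m)$ through the reciprocity identity yields $\Theta(k,P-n)\equiv\Theta(k,n)+1\pmod 2$ (using that $m$ is odd), again opposite parities; when an endpoint is degenerate ($m=1$, i.e.\ a divisor of $k$), I read $\Gamma=0$ off the divisibility clause and check directly that the partner gives $\Gamma=1$, so the pair still contributes exactly one zero. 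Hence every reflection pair $\{n,P-n\}$ has exactly one member with $\Gamma=0$. For odd $k$ the reflection on $\{1,\dots,k-1\}$ is fixed-point-free, giving $(k-1)/2$ pairs; adding $n=k$ (where $k\mid n$) yields $\#0=(k+1)/2$ and $\#1=(k-1)/2$, a difference of one. For even $k$ the reflection on $\{1,\dots,2k-1\}$ has the single fixed point $n=k$; the remaining $k-1$ pairs give $k-1$ zeros, and the two boundary values $n=k,2k$ both satisfy $k\mid n$ and give $\Gamma=0$, so $\#0=k+1$ and $\#1=k-1$, a difference of two.

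The main obstacle I anticipate is the $A$-even regime, exactly because the modulus of the governing inverse $A^{-1}\bmod m$ moves with $n$; the reciprocity identity is what simultaneously controls the shift (for periodicity) and the reflection (for counting) there. The remaining care is purely in the degenerate cases $m=1$ and $A=1$, where the inverse is undefined and one must instead invoke the divisibility clause of Theorem~\ref{thm1.1}; I would verify once and for all that these cases are consistent with the parity rule so that both the periodicity argument and the pairing argument go through without exception.
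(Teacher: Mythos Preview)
Your argument is correct. For odd $k$ it coincides with the paper's proof: both use the reflection $n\leftrightarrow k-n$, the identity $\Theta(s,k)+\Theta(k-s,k)=k/\gcd(k,s)$ (odd), and then recover minimality of the period from the count (the paper's Step~3); you should state that last implication explicitly rather than leave it implicit. Where you genuinely add something is the reciprocity identity $A\,\Theta(k,n)+m\,\Theta(n,k)=1+Am$, which the paper does not introduce. The paper only treats the odd-$k$ case and leaves even $k$ to the reader; your identity is exactly the missing tool for the $A$-even regime, since it tracks how $A^{-1}\bmod m$ moves both under the shift $n\mapsto n+k$ (periodicity) and under the reflection $n\mapsto 2k-n$ (counting). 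So the two proofs agree on the part the paper actually writes out, and your reciprocity device buys a uniform treatment that extends cleanly to the even case.
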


\begin{proof}[Proof for odd $k$]

If $k = 1$, we have $\Gamma(1, n) = 0$ for every $n\in \mathbb{N}$, so the statement holds for $k = 1$.
Assume that $k\ge 3$.\newline

Step 1: The sequence $(\Gamma(k,n))_{n=1}^\infty$ is periodic, and its period divides $k$. 

Let $u, v\in \mathbb{N}$ with $v = u + k$. We need to prove that $\Gamma(k, u) = \Gamma(k, v)$. We proceed by case analysis. 

\begin{enumerate}
\item[a)] Case 1: $k$ divides $u$. Then $k$ also divides $v$, so $\Gamma (k,u) = \Gamma (k,v) = 0$.
\item[b)] Case 2: $u$ divides $k$. Then $\Gamma (k, u) = 0$. Write $k = u\ell$ for some odd $\ell\in \mathbb{N}$. We have
$$\Gamma(k, v) \ =\ \Gamma(u\ell, u(\ell+1))\ =\ \Gamma(\ell, \ell+1)\ =\ 0,$$
because
$$\ell\cdot \frac{\ell-1}{2} + (\ell+1)\cdot 0\ =\ \frac{(\ell-1)\ell}{2}.$$
\item[c)] Case 3: $u$ does not divide $k$, and $k$ does not divide $u$. Let $d = \gcd(k,u)$. Since $k$ is odd, $k/d$ is odd. By Theorem \ref{thm1.1}, it suffices to show that $\Theta(u,k)$ and $\Theta(v,k)$ have the same parity. We have 
$$\frac{u\Theta(u,k)}{d} \ \equiv\ 1 \quad \mbox{ and }\quad \frac{v\Theta(v,k)}{d} \equiv 1 \mod \frac{k}{d}.$$
Since $v \equiv u \mod k$, we also have
$$\frac{u\Theta(v,k)}{d} \ \equiv\ 1 \mod \frac{k}{d}.$$ 
Hence, $\Theta(u,k) = \Theta(v,k)$.\newline
\end{enumerate}

Step 2: In $(\Gamma(k,n))_{n=1}^k$, the number of $0$'s is one more than the number of $1$'s. 

Pick $1 \le s \le (k-1)/2$ and let $r = \gcd(k,s) = \gcd(k-s,k)$. 

\begin{enumerate}
\item[a)] Case 1: If $s$ divides $k$, then $\Gamma(k,s) =0$. Write $k =s\ell$ for some odd $\ell \in \mathbb{N}_{\ge 3}$. We have $\Gamma(k, k-s) = \Gamma(\ell, \ell-1)=1$ because
$$1 + \ell\cdot \frac{\ell-3}{2} + (\ell-1)\cdot 0 \ =\ \frac{(\ell-1)(\ell-2)}{2}.$$
Hence, $\Gamma(k, k-s)\neq \Gamma(k,s)$.
\item[b)] Case 2: $s$ does not divide $k$. Observe that $k/2 < k-s <k$, so $k-s$ does not divide $k$. By Theorem \ref{thm1.1}, $\Gamma(k,s)$ and $\Gamma(k, k-s)$ are determined by the parity of $\Theta(s,k)$ and $\Theta(k-s,k)$, respectively. We have 
$$\Theta(s,k)\frac{s}{r} \ \equiv\ 1 \quad\mbox{ and }\quad\Theta(k-s,k)\frac{k-s}{r} \ \equiv\ 1 \mod \frac{k}{r}.$$
Hence,
$$\left( \frac{k}{r}-\Theta(k-s,k)\right)\frac{s}{r} \ \equiv\ 1 \mod \frac{k}{r}.$$ 
It follows that 
$$\Theta(s,r) +\Theta(k-s,k)\ =\ \frac{k}{r}.$$ Since $k/r$ is odd, we have $\Theta(s,k)\not\equiv\Theta(k-s,k)\mod2$. Therefore, $\Gamma(k,s)\neq\Gamma(k,k-s)$. 
\end{enumerate}

We have shown that for all $1 \le s \le (k-1)/2$, it holds that $\Gamma(k, s)\neq \Gamma(k, k-s)$. Along with  $\Gamma(k,k) = 0$, we conclude that for the $k$ terms  $(\Gamma(k,n))_{n = 1}^k$, the number of $0$'s is one more than the number of $1$'s.\newline

Step 3: $(\Gamma(k,n))_{n = 1}^\infty$ has period $k$. 

Let $T$ be the period of $(\Gamma(k,n))_{n = 1}^\infty$. By Step 1,  $T$ divides $k$. Hence, within the first $k$ terms, there are $k/T$ copies of the period. Let $p$ and $q$ be the number of $0$'s and $1$'s in each period. Then $(p-q)(k/T) =1$, which implies that $p-q = k/T =1$. Therefore, $(\Gamma(k,n))_{n = 1}^\infty$ has period $k$.
\end{proof}
    
\begin{prob}\normalfont\cite[Section 6]{ACLLMM} Let $H(x)$ be the density of all pairs $(a,b)\in \mathbb{N}^2$ with $1\le a\le b\le x$ and $\Gamma(a,b)=0$, i.e., 
    $$H(x)\ :=\ \frac{\#\{(a,b)\in \mathbb N^2\,:\, 1\le a\le b\le x, \Gamma(a, b) = 0\}}{\#\{(a,b)\in \mathbb N^2\,:\, 1\le a\le b\le x\}}.$$
Adding the condition $\gcd(a,b) = 1$, we obtain  
$$G(x)\ :=\ \frac{\#\{(a,b)\in \mathbb N^2\,:\, 1\le a\le b\le x, \Gamma(a, b) = 0, \gcd(a,b) = 1\}}{\#\{(a,b)\in \mathbb N^2\,:\, 1\le a\le b\le x, \gcd(a,b) = 1\}}.$$
Prove what the data in \cite[Section 6]{ACLLMM} suggest, i.e., 
$$\lim_{x\rightarrow\infty} G(x) = 0.5\ \neq \ \lim_{x\rightarrow\infty} H(x) \approx 0.304\ldots.$$
\end{prob}


\begin{thebibliography}{99}
\bibitem{ACLLMM} S. U. K. Arachchi, H. V. Chu, J. Liu, Q. Luan, R. Marasinghe, and S. J. Miller, On a pair of Diophantine equations. Available at: \url{https://arxiv.org/abs/2309.04488}.
\bibitem{BP} A. Behera and G. K. Panda, On the square roots of triangular numbers, \textit{Fibonacci
Quart.} \textbf{37} (1999), 98--105.
\bibitem{Be} M. Beiter, The midterm coefficient of the cyclotomic polynomial $\Phi_{pq}(X)$, \textit{Amer. Math. Monthly} \textbf{75} (1964), 769--770.
\bibitem{CCKKLMYY} X. Chen, H. V. Chu, F. K. Kesumajana, D. Kim, L. Li, S. J. Miller, J. Yang, and C. Yao, A pair of Diophantine equations involving the Fibonacci numbers, to appear in \textit{Fibonacci Quart.}. Available at: \url{https://arxiv.org/abs/2409.02933}. 
\bibitem{C4} H. V. Chu, Representation of $\frac{1}{2}(F_N-1)(F_{N+1}-1)$ and $\frac{1}{2}(F_N-1)(F_{N+2}-1)$, \textit{Fibonacci Quart.} \textbf{58} (2020), 334--339. 
\bibitem{Da} R. K. Davala, Solution to a pair of linear, two-variable, Diophantine equations with coprime coefficients from balancing and Lucas-balancing numbers, \textit{Notes Number Theory Discrete Math.} \textbf{29} (2023), 495--502. 
\bibitem{Fr} R. Frontczak,  Sums of cubes over odd-index Fibonacci numbers, \textit{Integers} \textbf{18} (2018), 1--9.
\bibitem{Pan} G. K. Panda, Some fascinating properties of balancing numbers, in Proceedings of the Eleventh International Conference on Fibonacci Numbers and their Applications, \textit{Congr. Numer.} \textbf{194} (2009), 185--189.
\bibitem{Ra} P. K. Ray, \textit{Balancing and cobalancing numbers}, Ph.D. Thesis, National Institute of
Technology, Rourkela, India, 2009. 
\end{thebibliography}
\end{document}